\numberwithin{equation}{section}
\theoremstyle{plain}
\newtheorem{thm}{Theorem}[section]
\newtheorem{cor}[thm]{Corollary}
\newtheorem{prop}[thm]{Proposition}
\newtheorem{lem}[thm]{Lemma}
\theoremstyle{definition}
\newtheorem{defn}[thm]{Definition}
\newtheorem{example}[thm]{Example}
\newtheorem{aDD^+m}[thm]{ADD^+endum}
\theoremstyle{remark}
\newtheorem{rmk}[thm]{Remark}
\newcommand{\bbC}{\mathbb{C}} 
\newcommand{\bbD}{\mathbb{D}}
\newcommand{\bbP}{\mathbb{P}}
\newcommand{\bbR}{\mathbb{R}} 
\newcommand{\bbZ}{\mathbb{Z}} 
\newcommand*{\defeq}{\mathrel{\vcenter{\baselineskip0.5ex \lineskiplimit0pt
			\hbox{\scriptsize.}\hbox{\scriptsize.}}}%
	=}
\DeclareMathOperator{\volume}{Vol}
\DeclareMathOperator{\VD}{VD}
\DeclareMathOperator{\HD}{HD}
\DeclareMathOperator{\Jac}{Jac}
\DeclareMathOperator{\DD}{DD}
\DeclareMathOperator{\diam}{diam}
\DeclareMathOperator{\dist}{dist}
\title{A Ma\~n\'e-Manning formula for expanding\\
measures
for endomorphisms of $\mathbb P^k$}
\author{Fabrizio Bianchi}
\address{CNRS, Univ. Lille, UMR 8524 - Laboratoire Paul Painlev\'e, F-59000 Lille, France}
 \email{fabrizio.bianchi$@$univ-lille.fr}
\author{Yan Mary He}
\address{Department of Mathematics\\
	University of Oklahoma\\
	Norman, OK 73019}
\email{he@ou.edu}
\date{\today}
\begin{document}

\maketitle
\begin{abstract}
Let $k \ge 1$ be an integer and $f$ a holomorphic endomorphism of $\mathbb P^k (\mathbb C)$
of algebraic degree $d\geq 2$. 
We introduce a {\it volume dimension} for ergodic $f$-invariant probability measures
with strictly positive Lyapunov exponents. In particular, this class of measures includes all ergodic
measures whose measure-theoretic entropy is
 strictly larger than $(k-1)\log d$, a natural generalization of the class of measures of positive
 measure-theoretic entropy in dimension 1.
The volume dimension is equivalent to the  Hausdorff dimension when $k=1$, but depends on
the dynamics of $f$ to incorporate the possible failure of Koebe's theorem and the non-conformality
of holomorphic endomorphisms for $k\geq 2$.
	
If $\nu$ is an ergodic $f$-invariant probability measure with strictly positive Lyapunov exponents, 
we prove a generalization of the Ma\~né-Manning formula relating the volume dimension, the
measure-theoretic entropy, and the sum of the Lyapunov exponents of $\nu$. 
As a consequence, we give a characterization of the first zero of a natural pressure
function for such expanding measures in terms of their volume dimensions. For hyperbolic maps,
such zero also coincides with the volume dimension of the Julia set, and with the exponent
of a natural (volume-)conformal measure. This generalizes results by Denker-Urba\'{n}ski and 
McMullen in dimension 1
to any dimension $k\geq 1$. 

Our methods mainly rely on a theorem by Berteloot-Dupont-Molino, which gives a precise control on
 the distortion of inverse branches of endomorphisms along generic inverse orbits with
 respect to measures with strictly positive Lyapunov exponents.
\end{abstract}

\section{Introduction}
Let $f : \bbP^1(\bbC) \to \bbP^1(\bbC)$ be a rational map of degree $d \ge 2$  and $\nu$ an ergodic $f$-invariant probability measure whose Lyapunov exponent is strictly positive. Such a measure is necessarily supported on the Julia set $J(f)$ of $f$. There is a well-known relation between the Hausdorff dimension $\HD(\nu)$, the measure-theoretic entropy $h_{\nu}(f)$, and the Lyapunov exponent $\chi_{\nu}(f)$ of $\nu$; namely, we have
\begin{equation}\label{eq_MM}
\HD(\nu) = \frac{h_{\nu}(f)}{\chi_{\nu}(f)}.
\end{equation}
This formula is usually referred to as the {\it Ma\~n\'e-Manning formula}; see  \cite{manning1984dimension,Mane}.
Hofbauer and Raith \cite{HofR} proved a version of \eqref{eq_MM}
 for piecewise monotone maps on the unit interval with bounded variation;
  see also \cite{Ledrappier81}.
The fact that \eqref{eq_MM}
 holds in one-dimensional complex dynamics crucially relies on distortion estimates
 for univalent holomorphic maps coming
from Koebe's theorem; see Section \ref{subsec_strategy}.

For smooth dynamical systems in higher dimensions, related formulas are known to hold
in a number of settings. If $f: M \to M$ is a diffeomorphism of a compact manifold $M$ and
$\nu$ is an ergodic probability measure on $M$ 
which is absolutely continuous with respect to the Lebesgue measure, Pesin \cite{Pesin77} proved that
$$h_{\nu}(f) = \chi^+_{\nu}(f)$$
where $\chi^+_{\nu}(f)$ is the sum of the non-negative Lyapunov exponents of $f$
counted with multiplicity; see also \cite{mane80}. 
When $M$ is a surface, Young \cite{Young} proved that 
\[
\HD(\nu)= \frac{h_\nu(f)}{\chi_+} +\frac{h_\nu(f)}{|\chi_-|} 
\] 
 when $\nu$ is ergodic
  and  $\chi_- < 0 < \chi_+$ are its Lyapunov exponents. This formula has been generalized 
 to the case of diffeomorphisms in any dimension;
 see \cite{LedrappierYoung85II} and \cite{BarreiraPesinSchmeling99}.
Such systems display attracting and repelling directions, and 
one decomposes the problem into
two problems, one for $f$ (along unstable manifolds) and one for $f^{-1}$ (along stable manifolds).
The Ma\~n\'e-Manning formula \eqref{eq_MM}
can be seen as a version of Young's result in (complex)
 dimension 1 where the system is not invertible.
 In this paper, we address
the validity of \eqref{eq_MM} in several complex variables, and more specifically for expanding measures
for (non-invertible) holomorphic endomorphisms of projective spaces in any dimension.

\medskip

Let $k \ge 1$ be an integer and denote $\bbP^k \defeq \bbP^k(\bbC)$.
If $f : \bbP^{k} \to \bbP^{k}$ is a holomorphic endomorphism of algebraic degree $d \ge 2$,
 it is not hard to find examples where (\ref{eq_MM}),
 with $\chi_\nu$ replaced by the sum of the Lyapunov exponents of $\nu$ 
(the natural generalization of the expansion rate along generic orbits),
does not hold. For instance, one can consider product self-maps of $\mathbb C^2$
of the form $(z,w)\mapsto (z^2 +a_1, w^2 + a_2)$,
where $a_i \in \bbC$ are such that the measures of maximal entropy of each component
have different Hausdorff dimensions. 

In \cite{BinderDeMarco}, Binder-DeMarco proposed a conjectural formula for the
Hausdorff dimension of the measure of maximal entropy $\mu$ of an endomorphism of $\mathbb P^k$ as follows:
$$\HD(\mu) = \frac{\log d}{\chi_1} + \dots + \frac{\log d}{\chi_k}.$$
This conjecture has been partially settled 
 \cite{BinderDeMarco,DinhDupont04,Dupont11}, 
and also versions of it have been proposed (and partially proved) for more general invariant measures
\cite{DinhDupont04,Dupont11,Dupont12,DeThelinVigny15,DupontRogue20dimension}.
In this paper, we introduce a  natural  dimension $\VD(\nu)$ for
ergodic $f$-invariant measures $\nu$ with strictly positive Lyapunov
exponents and show that this dimension satisfies a natural generalization of
\eqref{eq_MM}, 
where $\chi_\nu$ is replaced by (two times) the sum of the Lyapunov exponents.

\subsection{Statement of results}
Let $f : \bbP^k \to \bbP^k$ be a holomorphic endomorphism of algebraic degree $d \ge 2$.
The \emph{Julia set} $J(f)$ of $f$ is the support of the unique measure of
maximal entropy of $f$ \cite{Lyubich83, BD01, dinh2010dynamics}.
 Let $\mathcal{M}^+(f)$ 
 (resp. $\mathcal M^+_J (f)$)
 be the set of ergodic invariant probability measures on $\mathbb P^k$ (resp. on $J(f)$)
  with strictly positive Lyapunov exponents. The set $\mathcal{M}_J^+(f)$ contains
  the set $\mathcal M_e^+(f)$ of all
  ergodic probability measures whose 
  measure-theoretic
entropy is strictly larger than $(k-1)\log d$ \cite{DeThelin08,Dupont12},
 which are the natural generalization of the ergodic
 measures with strictly positive entropy in dimension $1$.
 Large classes of examples of measures in $\mathcal M^+_e(f)$
 were constructed and studied in \cite{Dupont12,UZ13,SUZ14,BDeq1,BDeq2}. 

We introduce a {\it volume dimension} for measures $\nu \in \mathcal{M}^+(f)$; 
see Section \ref{subsec_strategy} for an overview and Section \ref{sec_voldim} for precise definitions.
 The volume dimension is dynamical in nature and generalizes the notion of Hausdorff dimension
 in dimension 1 to higher dimensions to incorporate the failure of Koebe's theorem
 and the non-conformality of holomorphic endomorphisms.

For $\nu \in \mathcal M^+ (f)$, we denote by $\VD(\nu)$ the volume dimension,
$h_{\nu}(f)$ the measure-theoretic entropy, and $L_{\nu}(f)$ the {\it sum} of the Lyapunov
exponents of $\nu$. The main result of this paper 
relates these three quantities and generalizes
the Ma\~né-Manning formula to any $k \ge 1$.

\begin{thm}\label{thm_main}
Let $f : \bbP^k \to \bbP^k$ be a holomorphic endomorphism of algebraic degree $d \ge 2$.
For every $\nu\in \mathcal M^+(f)$ we have 
$$\VD(\nu) = \frac{h_{\nu}(f)}{2L_{\nu}(f)}.$$
\end{thm}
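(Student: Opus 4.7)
The plan is to carry out the classical one-dimensional proof of the Ma\~n\'e--Manning formula while adapting each ingredient to the non-conformal setting of endomorphisms of $\bbP^k$. The essential twist is that, in the absence of Koebe's theorem, dynamical balls are no longer comparable to round Euclidean balls of a single radius, so one must measure their size via volume rather than radius. This is exactly what produces the factor $2L_\nu(f)$ in the denominator: the Euclidean volume of a ball of radius $r$ in $\bbC^k$ has logarithm $2k\log r + O(1)$, and by Oseledets $\tfrac{1}{n}\log|\det_\bbR Df^n|\to 2L_\nu(f)$ along $\nu$-generic orbits.

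The three ingredients I would combine are the Brin--Katok local entropy formula, which yields $\nu(B_n(x,r)) = e^{-n(h_\nu(f)+o(1))}$ on a set of full $\nu$-measure; the Oseledets theorem applied to the complex Jacobian, which yields $|\det_\bbR Df^n(x)|^{-1} = e^{-2n(L_\nu(f)+o(1))}$; and the Berteloot--Dupont--Molino distortion theorem. After passing to the natural extension and lifting $\nu$ to $\hat\nu$, the last ingredient provides, for $\hat\nu$-a.e.\ inverse history $\hat x$, a tempered radius $r(\hat x)>0$ such that the inverse branch $f^{-n}_{\hat x}$ of $f^n$ sending $x_0$ to $x_{-n}$ is univalent on $B(x_0,r(\hat x))$ with distortion controlled up to a subexponential factor. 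Combining this with Oseledets yields, for $\nu$-typical $x$ and admissible $r$,
\[
\volume\bigl(B_n(x,r)\bigr) \;=\; r^{2k}\, e^{-2n(L_\nu(f)+o(1))}.
\]

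The volume dimension $\VD(\nu)$ of Section \ref{sec_voldim} should be designed so as to compute, along $\nu$-generic orbits and admissible radii, the asymptotic ratio $\log\nu(B_n(x,r))/\log\volume(B_n(x,r))$. Substituting the Brin--Katok estimate and the volume estimate above into this ratio yields the claimed value $h_\nu(f)/(2L_\nu(f))$. The two inequalities in the theorem would then follow from Frostman-type arguments adapted to these dynamical balls: an efficient covering of $\operatorname{supp}\nu$ by such balls produces the upper bound $\VD(\nu)\le h_\nu(f)/(2L_\nu(f))$, while a local-mass estimate preventing $\nu$-mass from concentrating on sets of anomalously small dynamical volume produces the matching lower bound.

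The main obstacle I expect is the uniformization of the various subexponential errors. Brin--Katok, Oseledets and Berteloot--Dupont--Molino all yield their estimates only $\nu$-a.e.\ and modulo $o(n)$ terms; moreover the radius $r(\hat x)$ on which distortion is controlled is only tempered, and the distortion constants themselves are subexponential in $n$. The standard remedy is to exhaust $\nu$ by an increasing sequence of compact Pesin-type sets on which all these quantities are uniform, run Vitali-type coverings on each such set, and pass to the limit. A secondary subtlety is to ensure that the dynamical volume estimate, which a priori depends on the choice of inverse branch (hence on the lift $\hat\nu$), descends to a quantity depending only on $\nu$, so that $\VD(\nu)$ is indeed an invariant of the measure on $\bbP^k$ rather than of its lift.
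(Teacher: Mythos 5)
Your proposal identifies the right three-ingredient strategy and the right heuristics, but it follows a genuinely different technical route from the paper's proof in the key exact-dimensionality step. You propose to get the entropy estimate via the Brin--Katok local entropy formula applied to Bowen balls $B_n(x,r)$. The paper instead follows Ma\~n\'e's classical argument (as in Przytycki--Urba\'nski, Section 11.4): it chooses countable partitions $\mathcal P_1,\mathcal P_2$ with small entropy defect, invokes the Shannon--McMillan--Breiman theorem for $\nu(\mathcal P^n(x))$, and proves (Proposition \ref{p:double-ellipses}) that each $\mathcal P_1^n(x)$ contains a dynamical ellipse $E(n)$ while each $\mathcal P_2^n(x)$ is contained in one $F(n)$. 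This gives the sandwich $\mathcal P_2^{n_F}(x) \subseteq U(N,x,\kappa,\epsilon)\subseteq \mathcal P_1^{n_E}(x)$ directly in terms of the objects actually used to define $\VD$. Your Brin--Katok route is plausible but requires an additional comparison that the paper never has to make: relating the Bowen ball $B_N(x,\kappa)$ to the single-branch pullback $U(N,x,\kappa,\epsilon)=f^{-N}_{T^N\hat x}(B(f^N(x),\kappa e^{-NM\epsilon}))$ appearing in the definition of the volume dimension. That comparison is where the $e^{-NM\epsilon}$ shrinking factor becomes essential (it lets the paper interpolate between the good times $n_l$ given by Lemma \ref{cor_cordistortion4} and arbitrary $N$), and your outline omits this modification entirely --- you write volumes as $\volume(B_n(x,r))=r^{2k}e^{-2n(L_\nu+o(1))}$ as if the radius were unmodified, which is only justified after that interpolation argument (Lemma \ref{l:nu-U-compare}). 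Apart from that, your points about uniformizing the tempered errors via Pesin-type exhaustion (the paper's $Z_\nu(\epsilon)$, $Z_\nu^\star(\epsilon)$) and about descent from the natural extension to $\bbP^k$ are exactly the considerations the paper handles. The second step of your plan --- covering and mass-distribution arguments to pass from local dimensions to $\VD$ --- matches the paper's Proposition \ref{prop_2.1}, a Young-type criterion, both in spirit and substance. So: correct strategy, different key lemma for the entropy half, and a nontrivial gap in handling the $e^{-NM\epsilon}$ calibration that makes the single-branch pullbacks behave uniformly in $N$.
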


When
 $k=1$, 
Theorem \ref{thm_main}
reduces to the Ma\~né-Manning formula \eqref{eq_MM}, as
in this case we have $2\VD(\nu) = \HD(\nu)$; see Proposition \ref{rmk_VD}. The factor $2 = 2k/k$ is due to the fact that we weight
open sets of covers by their volume instead of their diameter and we have $k$ Lyapunov exponents, counting multiplicities.
\medskip

As an application of Theorem \ref{thm_main}, we study a number of natural
dimensions and quantities associated to an endomorphism $f$. 
In dimension 1, these quantities are already defined and well studied; see for example
\cite{DU1,DU2,PU, McMullen}. We first define a {\it dynamical dimension}
$\DD^+_J(f)$
 of $f$ as
\begin{equation*}
\begin{aligned}
	\DD^+_J (f) & \defeq \sup  \left\{ \VD (\nu) \colon \nu \in  \mathcal{M}_J^+ (f) \right\}.
\end{aligned}
\end{equation*}
For $k = 1$, recall that the \emph{pressure function} is defined as
\[
P(t) \defeq \sup_{\nu} \left\{h_\nu(f) - t\chi_{\nu}(f)\right\},
\]
where $t \in \bbR$ and the supremum is taken over the set of invariant probability
measures on $J(f)$. In fact, the supremum can be taken over
$\nu \in \mathcal M^+_J (f) = \mathcal M^+(f)$. This can be seen by combining 
Ruelle's inequality \cite{Ruelle78}
 with a theorem of Przytycki \cite{P93} stating that all invariant measures supported on
 the Julia set of a rational map have non-negative Lyapunov exponent.

For any $k \ge 1$, we define in a similar way a pressure function
 $P_J^+(t)$ as 
\[
P^+_J (t) \defeq \sup\left\{h_\nu(f) - t L_{\nu} (f)\colon \nu \in \mathcal{M}_J^+ (f)\right\}.
\]
By the above, we have $P^+_J(t)=P(t)$ when $k=1$. We remark that, for any $k \geq 2$,
there may exist ergodic probability measures $\nu$ on $J(f)$ with $L_\nu(f)<0$; see
Section \ref{ss:pressure-exp} for examples and further comments. However, 
as in the case of $k=1$,
the pressure function $P_J^+(t)$ is still non-increasing and convex for all $k\geq 1$;
see Lemma \ref{lemma_pressure}. 
 We define
$$p_J^+(f) \defeq \inf \left\{t>0 \colon P_J^+(t)\leq 0\right\}.$$

As a consequence of Theorem \ref{thm_main},
we have the following result which generalizes a theorem
 due to Denker-Urba{\'n}ski \cite{DU1, DU2}
 in the case of rational maps to any dimension. 

\begin{thm} \label{thm_main_equalities_general}
	Let $f\colon\mathbb P^k\to \mathbb P^k$ be a holomorphic endomorphism
	of algebraic degree $d\geq 2$.  Then we have
	\[
	2\DD_J^+ (f)  = p_J^+(f).
	\]
\end{thm}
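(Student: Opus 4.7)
The plan is to observe that $p_J^+(f)$ has a direct reinterpretation as a supremum of ratios $h_\nu(f)/L_\nu(f)$, after which Theorem \ref{thm_main} instantly converts this supremum into $2\DD_J^+(f)$.

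For the first step, I would unpack the definition of $p_J^+(f)$. For every $t > 0$ and every $\nu \in \mathcal{M}_J^+(f)$, the strict positivity of the Lyapunov exponents forces $L_\nu(f) > 0$, so the inequality $h_\nu(f) - t L_\nu(f) \leq 0$ is equivalent to $t \geq h_\nu(f)/L_\nu(f)$. Consequently,
\[
P_J^+(t) \leq 0 \iff t \geq \sup_{\nu \in \mathcal{M}_J^+(f)} \frac{h_\nu(f)}{L_\nu(f)}.
\]
Taking the infimum of such $t$ yields
\[
p_J^+(f) \;=\; \sup_{\nu \in \mathcal{M}_J^+(f)} \frac{h_\nu(f)}{L_\nu(f)},
\]
where the identity holds in the extended reals (both sides are $+\infty$ if the supremum is unbounded, and are $\geq 0$ because $h_\nu, L_\nu \geq 0$, so the restriction to $t > 0$ is harmless).

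For the second step, I would invoke Theorem \ref{thm_main}, which applies on $\mathcal{M}_J^+(f) \subseteq \mathcal{M}^+(f)$, to rewrite each ratio as $h_\nu(f)/L_\nu(f) = 2\VD(\nu)$. Substituting,
\[
p_J^+(f) \;=\; \sup_{\nu \in \mathcal{M}_J^+(f)} 2\VD(\nu) \;=\; 2\DD_J^+(f),
\]
which is the desired identity.

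The whole argument is a short manipulation of definitions with Theorem \ref{thm_main} supplying all of the content; there is no substantial obstacle beyond that main theorem. The only points worth checking carefully are the use of $L_\nu(f) > 0$ to invert the inequality defining the pressure without worrying about sign reversals, and the verification that the $t > 0$ requirement in the definition of $p_J^+(f)$ does not cost anything (indeed, both sides of the final identity are automatically non-negative).
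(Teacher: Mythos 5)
Your proof is correct and rests on the same single ingredient as the paper's (Theorem \ref{thm_main}), so it is essentially the same approach. The paper's Proposition \ref{prop_zero} proves the two inequalities $p_J^+(f) \geq 2\DD_J^+(f)$ and $p_J^+(f) \leq 2\DD_J^+(f)$ separately, the second by contradiction, while you first record the clean identity $p_J^+(f) = \sup_{\nu \in \mathcal{M}_J^+(f)} h_\nu(f)/L_\nu(f)$ (valid because $L_\nu(f) > 0$ for $\nu \in \mathcal{M}_J^+(f)$) and then apply Theorem \ref{thm_main} once; the content is the same, with your presentation slightly more streamlined.
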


Finally, in the spirit of the celebrated Bowen-Ruelle formula for hyperbolic maps \cite{Bowen79, Ruelle82}, we give
an interpretation of $p_J^+(f)$, when $f$ is hyperbolic
(i.e., uniformly expanding on $J(f)$; see Section \ref{ss:def_ellipses}) in terms of (volume-)conformal measures.
Given $t \ge 0$,
we say that a probability measure $\nu$ on $J(f)$ is {\it $t$-volume-conformal on $J(f)$}
if, for every Borel subset $A\subset J(f)$ on which $f$ is invertible, we have
\begin{equation*}
	\nu (f (A)) =  \int_A |\Jac f|^t d\nu
\end{equation*}
and define
\[\begin{aligned}
	\delta_J (f)  
	& \defeq \inf \left\{ t \ge 0 \colon \mbox{ there exists  a $t$-volume-conformal
		measure on } J(f)\right\}. 
\end{aligned}\]
For $k=1$, the definitions of $t$-volume-conformal measures and $\delta_J(f)$
reduce to those of conformal measures and conformal dimension for rational maps; see \cite{DU1, DU2, McMullen, PU}.
In this case,
owing to Bowen \cite{Bowen79}, one sees that
\[
\delta_J (f)= p_J^+ (f)= \HD (J(f))
\]
for every hyperbolic rational map $f$ on $\mathbb P^1(\mathbb C)$, and that there exists a unique
ergodic measure $\nu$ on $J(f)$ such that $\HD(\nu)= \HD(J(f))$.
We have here the following result in any dimension,
which further motivates
the definition of the volume dimension as a natural generalization of the Hausdorff dimension for
all 
$k\geq 1$.
Observe that, if $f$ is hyperbolic, every invariant probability measure $\nu$ on $J(f)$ belongs to $\mathcal M^+(f)$.

 \begin{thm} \label{thm_main_equalities_hyp}
 Let $f\colon\mathbb P^k\to \mathbb P^k$ be a hyperbolic holomorphic endomorphism
 of algebraic degree $d\geq 2$.  Then we have
 \[
 \delta_J(f) = p_J^+(f) = 2\VD (J(f)) 	 
 \]
 and there exists a unique ergodic
 measure $\nu$ on $J(f)$ such that $\VD(\nu)= \VD(J(f))$.
 \end{thm}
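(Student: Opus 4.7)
By Theorem~\ref{thm_main_equalities_general} we already have $p_J^+(f)=2\DD_J^+(f)$, so it remains to establish (a)~$\delta_J(f)=p_J^+(f)$, (b)~$\DD_J^+(f)=\VD(J(f))$, and (c)~the uniqueness statement. The plan is to combine Theorem~\ref{thm_main} with classical Ruelle--Perron--Frobenius thermodynamic formalism, which applies because, under the hyperbolicity assumption, the restriction $f|_{J(f)}$ is an open distance-expanding map on the compact metric space $J(f)$.

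For every $t\in\mathbb R$ I apply this formalism to the H\"older continuous potential $\varphi_t\defeq -t\log|\Jac f|$ on $J(f)$. Standard results then produce a real-analytic and strictly decreasing pressure function $t\mapsto P(\varphi_t)$, a unique equilibrium state $\nu_t$, and a unique (up to scalar) eigenmeasure $m_t$ of the dual transfer operator $\mathcal{L}_t^{*}$ with eigenvalue $e^{P(\varphi_t)}$. Using $L_\nu(f)=\int\log|\Jac f|\,d\nu$ on $\mathcal{M}^+(f)$ one identifies $P(\varphi_t)=P_J^+(t)$, and a direct change-of-variables computation shows that $t$-volume-conformal probability measures coincide with eigenmeasures of $\mathcal{L}_t^{*}$ of eigenvalue exactly $1$. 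Since $\mathcal{L}_t^{*}$ admits a positive eigenmeasure only at the top eigenvalue $e^{P_J^+(t)}$, and $P_J^+$ is strictly decreasing with unique zero $t_0\defeq p_J^+(f)$, a $t$-volume-conformal probability measure exists iff $t=t_0$; this gives (a). For (b), apply Theorem~\ref{thm_main} to $\nu_{t_0}$: the equality $P_J^+(t_0)=0$ gives $h_{\nu_{t_0}}(f)=t_0 L_{\nu_{t_0}}(f)$, hence
\[
\VD(\nu_{t_0}) \;=\; \frac{h_{\nu_{t_0}}(f)}{2L_{\nu_{t_0}}(f)} \;=\; \frac{t_0}{2} \;=\; \DD_J^+(f).
\]
Together with the standard monotonicity $\VD(\nu)\leq \VD(J(f))$ for $\nu$ supported on $J(f)$ and the matching reverse inequality discussed below, this yields $\VD(J(f))=p_J^+(f)/2$. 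For (c), any ergodic $\nu$ realizing $\VD(\nu)=\VD(J(f))$ must satisfy $h_\nu(f)=t_0 L_\nu(f)$ by Theorem~\ref{thm_main}, hence is an equilibrium state for $\varphi_{t_0}$; the uniqueness of such an equilibrium state on the hyperbolic repeller concludes the proof.

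The main obstacle is the inequality $\VD(J(f))\leq \DD_J^+(f)$, which requires realizing the set-theoretic volume dimension of $J(f)$ (defined via dynamically sized covers) by an invariant probability measure. In dimension~$1$ this is classical (Bowen--McMullen) and relies on Koebe's theorem. In the present higher-dimensional setting I would use the $t_0$-volume-conformal measure $m_{t_0}$: by the Gibbs property combined with uniform expansion, $m_{t_0}$ distributes mass on every dynamical ball comparably to the appropriate power of its volume. Combined with the distortion estimates for inverse branches provided by the Berteloot--Dupont--Molino theorem that is central to the rest of the paper, a Frostman-type estimate on dynamically adapted covers should yield $\VD(m_{t_0})\geq \VD(J(f))$, and hence the required identification.
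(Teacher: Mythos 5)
Your strategy matches the paper's in all essential respects: thermodynamic formalism on the expanding repeller produces the equilibrium state $\nu_{t_0}$ at $t_0 = p_J^+(f)$, Theorem~\ref{thm_main} gives $2\VD(\nu_{t_0}) = t_0$, and the uniqueness clause follows from uniqueness of that equilibrium state. Where the paper constructs a conformal measure via the Patterson--Sullivan method to obtain $\delta_X(f)\leq p_X^+(f)$ and proves $2\VD(X)\leq\delta_X(f)$ separately (Proposition~\ref{prop_confmeasure2}), you use the Ruelle--Perron--Frobenius dual eigenmeasure---a positive probability eigenmeasure of $\mathcal L_t^{*}$ must have eigenvalue $e^{P_J^+(t)}$, so a $t$-volume-conformal probability measure exists only at $t=t_0$---to get $\delta_J(f)=t_0$ in one stroke; this is equivalent in content, slightly more streamlined. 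The piece you leave as a sketch, the ``Frostman-type estimate'' yielding $\VD(J(f))\leq t_0/2$, is exactly what the paper carries out in Lemma~\ref{l:mcm-conformal} and Proposition~\ref{prop_confmeasure2}: the conformal measure's mass on the dynamical neighbourhoods $U(N,x,\kappa,\epsilon)$ is pinned to the power $t/2$ of their volume, using the Berteloot--Dupont--Molino control on Jacobian distortion (Lemma~\ref{l:nu-U-compare}), and this bounds the covers appearing in each $\VD_\nu^{\epsilon,\kappa}$. One small imprecision: $\VD(m_{t_0})$ is not defined in the paper's framework since $m_{t_0}$ is not invariant; what is actually needed, and what Proposition~\ref{prop_confmeasure2} delivers, is a uniform upper bound on $\VD_\nu(J(f))$ over all $\nu\in\mathcal M_J^+(f)$, obtained by testing the fixed conformal measure $m_{t_0}$ against the covers defining each $\VD_\nu^{\epsilon,\kappa}(J(f))$ and passing to the limit via Lemma~\ref{lem:vdek-vde}.
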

  
\begin{rmk}\label{rmk_lift}
As all our arguments will be local,
our results apply more generally to the setting of \emph{polynomial-like maps} in any dimension, i.e., proper holomorphic maps 
of the form $f\colon U\to V$, with $U\Subset V\Subset \mathbb C^k$ and $V$ convex \cite{DS03,dinh2010dynamics}.
For a large class of such maps (i.e., those whose topological degree dominates 
all the other dynamical degrees \cite{BDR23}),
an analogue of the inclusion $\mathcal M_e^+(f)\subset \mathcal M^+_J (f)$ in this more general context has been proved in 
\cite{BR22}.

As every endomorphism of $\mathbb P^k$ lifts to a homogeneous polynomial endomorphism
 of $\mathbb C^{k+1}$,
 we can
assume for simplicity that the maps we consider are polynomials. Observe that the Lyapunov exponents of every lifted measure
are the same as those of the original measure, with the addition of an extra exponent $\log d$. Since $\log d > 0$ when $d\geq 2$,
this
does not change the condition
 on the positivity of the Lyapunov exponents.
\end{rmk}

\subsection{Volume dimensions and strategy of the proofs} \label{subsec_strategy}
Let us first recall the idea of the proof of the Ma\~né-Manning formula \eqref{eq_MM} in dimension 1.
It essentially consists of two steps.
\begin{enumerate}
\item  The first step consists of defining a \emph{local dimension} at a point $x$ by setting
\[\delta_x\defeq \lim_{r\to 0}\frac{\log \nu (B (x,r))}{\log r}\]
(whenever the limit exists), where $B(x,r)$ denotes the balls of radius $r$ centred and $x$, and proving that 
the limit is well-defined and equal to
the ratio $h_\nu(f)/\chi_\nu (f)$ for $\nu$-almost every $x$. In particular, $\nu$ is \emph{exact-dimensional}.
\item The second step is to prove that the Hausdorff dimension of $\nu$ must be equal to the common value
of the local dimensions found in the first step \cite{Young}.
\end{enumerate}
Let us describe how the one-dimensional setting plays a crucial role in Step (1). 
By \cite{BK83} and \cite{mane80},
for $\nu$-almost every $x$
we have
\[
h_\nu (f) = \lim_{\kappa \to 0}
 \lim_{n\to \infty} \frac{-\log \nu (B_n (x,\kappa))}{n},\]
where $B_n(x,\kappa)$ is the \emph{Bowen ball
of radius $\kappa$ and depth $n$}. 
This is
defined as 
$$B_n (x,\kappa)\defeq \left\{y\colon |f^{j}(y)-f^j(x)| 
< \kappa, 0\leq j \leq n\right\}.$$
The crucial observation is that, for large $n$,
 the Bowen ball
$B_n (x,\kappa)$ 
is 
comparable (up to precisely
 quantifiable errors) to the ball $B(x, \kappa\, e^{-n\chi_\nu (f)})$ 
 of the same center and radius $\kappa\, e^{-n \chi_\nu (f)}$.
 Fixing a $\kappa_0$ 
 for simplicity, and
 setting $n(r)\sim |\log r|/\chi_\nu (f)$, it then follows that
 \[
  \lim_{r\to 0}\frac{\log \nu (B (x,r))}{\log r}
=
\lim_{r\to 0}
\frac{-\log \nu (B_{n(r)} (x, \kappa_0))}{n(r)}
\frac{n(r)}{-\log r}
= \frac{h_\nu(f)}{\chi_\nu},
 \]
 which in particular shows that $\delta_x$ is well-defined.
The precise relation
between geometric balls and Bowen balls is a consequence of 
 Koebe's theorem and related distortion estimates, which imply
 that
 images of balls by holomorphic maps (and in particular by their inverse branches) are still comparable to balls. 
As a consequence,
in complex 
dimension $1$,
there is a natural interplay between the Hausdorff dimension and the dynamics of a rational map.
Observe in particular that one may define
 the Hausdorff dimension of $\nu$
by using covers consisting of Bowen balls, indexed over their depth $n$, and sending $n$ to infinity; see also \cite{CPZ}.

\medskip

All the above 
 is in
 sharp contrast with the 
 higher-dimensional situation, where,
  due to the lack of conformality of holomophic maps,
  preimages of balls can be arbitrarily distorted, and far from being balls.
  In the best possible scenario (e.g., for hyperbolic product maps),
  the preimages of balls are approximately ellipses whose axes reflect the contraction
  rate of the inverse branches in the different directions. 

On the other hand,
when $\nu \in \mathcal M^+(f)$, a
result by Berteloot-Dupont-Molino (see \cite{BDM, BD} and Theorem \ref{thm_distortion} below)
states that the best possible scenario described above is actually true, in an infinitesimal sense, 
for preimages of balls along generic orbits of $\nu$.
More precisely, there exists an increasing (as $\epsilon\to 0$) measurable exhaustion
$\{Z^\star_\nu (\epsilon)\}_\epsilon$ of a full-measure subset $Z^\star_\nu$ of the space of orbits for $f$ such that
 the preimages of sufficiently small balls along  orbits in $Z^\star_\nu(\epsilon)$ 
 are approximately ellipses, and the contraction rate for their volume
 is essentially given (up to further controllable error terms) by $e^{-nL_\nu(f)+n{O(\epsilon)}}$.
This is 
a consequence of very refined estimates
on the convexity defect of such preimages.
Such property was already exploited in \cite{BB18} to give bounds on the
Hausdorff dimension of the bifurcation locus of families of endomorphisms of $\mathbb P^k$
\cite{BBD18,B19}, and in particular to prove that this is maximal near isolated Lattés maps, i.e.,
maps for which all the Lyapunov exponents are equal and minimal, i.e., equal to $(\log d)/2$ \cite{,BD99,BertelootDupont05}.

Fix $\nu \in \mathcal{M}^+(f)$. Denote by $\pi\colon Z^\star_\nu\to \mathbb P^k$
the projection associating to any orbit $\hat z =\{z_n\}_{n\in \mathbb Z}$
its element $z_0$. For $x \in \pi(Z^\star_\nu(\epsilon))$, $\kappa>0$,
 and $N\in \mathbb N$,
we consider
(when well-defined)
the neighbourhood $U=U(N,x,\kappa,\epsilon)$
 of $x$
  satisfying $$f^N (U) = B(f^N(x),\kappa\, e^{-NM\epsilon})$$ where 
  $e^{M}$ is a bound for the expansion of $f$ and 
  we require that
  $f^N|_{U}$ is injective. 
  It follows from 
   the above 
  result by Berteloot-Dupont-Molino, and by further estimates that we develop in Section \ref{sec_prelim},
  that there exist some $r(\epsilon)$ and $n(\epsilon)$ such that, for all
  $x \in \pi(Z^\star_\nu(\epsilon))$, $0<\kappa<r(\epsilon)$, and $N\geq n(\epsilon)$
the sets   $U(N,x,\kappa,\epsilon)$  are indeed well-defined and
approximately ellipses, of controlled geometry.
  We see these sets $U(N,x,\kappa,\epsilon)$ as a suitable
  version of the Bowen balls $B_n(x,\kappa)$
  in any dimension.
Let us set
\begin{equation*}
	\delta_x (\epsilon,\kappa, N) \defeq \frac{\log \nu(U(N,x,\kappa,\epsilon))}{\log {\rm Vol}(U(N,x,\kappa,\epsilon))},
\end{equation*}
where $\rm Vol$ denotes the volume 
with respect to the Fubini-Study metric.
As a first step 
(which corresponds to Step (1) above)
towards proving Theorem \ref{thm_main}, 
we show that every $\nu\in \mathcal M^+(f)$
 is {\it exact (volume-)dimensional}; namely, for $\nu$-almost every $x$, we have
$$\limsup_{\epsilon \to 0} \limsup_{\kappa \to 0} \limsup_{N \to \infty}\delta_x (\epsilon,\kappa, N) = \liminf_{\epsilon \to 0} \liminf_{\kappa \to 0} \liminf_{N \to \infty}\delta_x (\epsilon,\kappa, N)
=
\frac{h_\nu(f)}{2L_\nu(f)};
$$
 see
 Theorem \ref{thm_11.4.2} 
 and
 Corollary \ref{cor_exactdim}. 
We adapt here the approach of Ma\~n\'e 
\cite{Mane}
in higher dimensions, thanks to the distortion estimates
developed in Section \ref{sec_prelim}.

\medskip

Once the local dimension of every $\nu\in \mathcal M^+(f)$ is well-defined as above, 
we give a global interpretation of this quantity by defining 
a \emph{volume dimension}
for these measures. The idea is to use the sets
$U(N,x,\kappa,\epsilon)$ to cover 
the ``slice'' $X\cap \pi (Z^\star_\nu(\epsilon))$ of every set $X\subseteq Z^\star_\nu$.
More precisely, for every $X \subseteq \pi(Z^\star_\nu)$ and $\epsilon>0$, setting $X^\epsilon \defeq   X\cap \pi(Z^\star_\nu(\epsilon))$, 
 we define the
 quantity
 $\VD^\epsilon_\nu(X^\epsilon)$ as
\[
\VD^\epsilon_\nu(X^\epsilon) \defeq 
\sup \left\{ \alpha :  \Lambda^\epsilon_\alpha(X^\epsilon) = \infty\right\}=
\inf\left\{ \alpha : \Lambda^\epsilon_\alpha(X^\epsilon) = 0\right\},
\]
where
\[
\Lambda^\epsilon_\alpha (X^\epsilon)
\defeq
\lim_{\kappa\to 0}
\lim_{N^\star \to \infty} \inf_{\{U_i\}}
\sum_{i \ge 1} \volume(U_i)^\alpha.
\]
Here 
 the infimum is taken over the covers consisting of sets $U_i$
of the form $U_i=U(N_i, x,\kappa,\epsilon)$, 
  for some
  $x\in \pi(Z_\nu(\epsilon))$ and
  $N_i \ge N^\star$.
The \emph{volume dimensions} of $X$ and $\nu$ are then 
respectively defined as
\[
\VD_\nu(X) \defeq \limsup_{\epsilon \to 0} \VD_{\nu}^{\epsilon}(X^\epsilon)
\quad \mbox{ and } \quad
\VD(\nu)\defeq \inf \left\{\VD_\nu(X)\colon X \subseteq \pi(Z_\nu), \nu(X)=1\right\},
\]
and the $\limsup_{\epsilon\to 0}$ is actually a limit; see Section \ref{subsec_eqdefnl}.
We prove in Proposition \ref{prop_2.1} a version of Young's criterion \cite[Proposition 2.1]{Young},
 relating the local volume dimensions $\delta_x$ with the  volume dimensions $\VD_\nu(X)$ and $\VD(\nu)$.
 This corresponds to 
 Step (2) above and, together with 
 the exact volume-dimensionality of $\nu$  proved in the first step, completes the proof of Theorem \ref{thm_main}.

\subsection{Organization of the paper}
The paper is organized as follows. In Section \ref{sec_prelim}, we derive from the distortion theorem \cite{BDM,BD} the estimates that we will need, and we introduce the 
volume-conformal measures and the pressure function $t \mapsto P_J^+ (t)$. 
We prove the exact dimensionality of every
 $\nu \in \mathcal{M}^+(f)$ in Section \ref{sec_proofmain}.
In Section \ref{sec_voldim}, we define and study the volume dimensions of sets and measures.
We conclude the proof of
Theorem \ref{thm_main}
and prove Theorems \ref{thm_main_equalities_general}
 and \ref{thm_main_equalities_hyp} in Section \ref{sec_proof1.3}.

\subsection*{Acknowledgements}
The authors would like to thank the University of Lille and the University of Oklahoma for the warm welcome and for the excellent work conditions.

This project has received funding from
 the French government through the Programme Investissement d'Avenir
 (LabEx CEMPI ANR-11-LABX-0007-01,
ANR QuaSiDy ANR-21-CE40-0016,
ANR PADAWAN ANR-21-CE40-0012-01)
managed by the Agence Nationale de la Recherche.

\section{Definitions and preliminary results}\label{sec_prelim}
After fixing some notations in Section \ref{ss:def_ellipses}, in Section \ref{subsec_dist}
we recall the
  distortion theorem by Berteloot-Dupont-Molino \cite{BDM,BD} 
  and deduce the estimates which will be essential
  ingredients in the proof of Theorem \ref{thm_main}. We define and study basic properties about volume-conformal measures
  in Section \ref{ss:volume-conformal},
  and a pressure function in Section \ref{ss:pressure-exp}.

\subsection{Notations}\label{ss:def_ellipses}
Let $f : \bbP^k \to \bbP^k$ be a holomorphic endomorphism and $\nu$ an ergodic $f$-invariant probability measure. By Oseledets' theorem \cite{Oseledec}, one can associate to $\nu$ its \emph{Lyapunov exponents} $\chi_{\min} \defeq \chi_l < \ldots <\chi_1$,
 where $1\le l \le k$. For $\nu$-almost every $x \in \bbP^k$, there exists a stratification in
 complex
 linear subspaces
 $\{0\}=: {(L_{l+1})_x} \subset {(L_l)_x} \subset \ldots \subset (L_1)_x = T_x \mathbb P^k$ of the complex
 tangent space $T_x \mathbb P^k$ such that $Df_x (L_j)_x = (L_j)_{f(x)}$ and $\lim_{n\to \infty} n^{-1}\log ||Df_x^n v|| 
 =\chi_j$
 for all $v \in (L_{j})_x \setminus (L_{j+1})_x$ for all $1\leq j \leq l$.
 
Let us first assume that all the $\chi_j$'s are distinct, i.e., that we have $l=k$ and $\chi_{\min} = \chi_k < \ldots < \chi_1$.
Then,
  $(L_j)_x$ has dimension $k-j+1$ for all $1\leq j \leq k$. 
We denote by $\mathcal O$ a full measure subset of the support of $\nu$
 given by Oseledets' theorem.  
Take
$x \in \mathcal O$. Fix  a basis $(\ell_j)_x$ of the complex
tangent space $T_x \mathbb P^k$ with the property that 
 $(L_j)_x$
 is equal to the span of $\{(\ell_j)_x, \dots, (\ell_k)_x \}$.
Denote by $\{e_j\}_{j=1}^k$ the standard basis of $\mathbb R^k \subset \mathbb C^k$. For every $r_1, \dots, r_k \in \mathbb R$ (sufficiently small), we denote by
$\mathcal E_{x} (r_1, \dots, r_k)$
the image of the unit ball $\mathbb B^k \subset \mathbb C^k$ 
(in a given local chart at $x$) under the composition $e\circ \Phi : \bbC^k \to \bbP^k$, where $e\colon T_x \mathbb P^k\to \mathbb P^k$ is the standard exponential map and $\Phi \colon \mathbb C^k\to T_x \mathbb P^k \simeq \mathbb C^k$ is a linear map such that
$\Phi \big((e_j)_x\big) = r_j (\ell_j)_x$.

If, for all
$1\leq j\leq k$, 
the argument $r_j$ of $\mathcal E_x$ is a function $\phi(j)$ depending on $j$, we write
\[
\mathcal E_x (\phi(j)) \defeq \mathcal E_x (\phi(1), \dots, \phi (k))
\]
for brevity.
In particular, we will often
have $\chi_{\min}>0$ and
 take $\phi(j)$ of the form
$\phi(j)= c_1 e^{-n(\chi_j\pm c_2\epsilon)}$ 
for some 
$n\in \mathbb N$, $0<\epsilon\ll\chi_{\min}$
 sufficiently small,
 and some positive constants
 $c_1$
 (independent of $j$ and $n$) and $c_2$
 (independent of $j$, $n$, and $\epsilon$).
We will call the sets $\mathcal E_x$ \emph{dynamical ellipses} in this case.

If $l<k$, i.e.,
some Lyapunov exponent $\chi_j$ has multiplicity larger than 1, the above construction 
 generalizes by taking into account the corresponding $r_j$ with the same multiplicity. Namely, we assign the same 
$r_j$
to all the directions associated to the same Lyapunov exponent which has multiplicity larger than 1.

\medskip

Let $X\subseteq \mathbb P^k$ be a closed invariant set for $f$.
We denote by $\mathcal M_X^+ (f)$ the set of all ergodic $f$-invariant measures supported on $X$
with strictly positive Lyapunov exponents. We drop the index $X$ if $X = \mathbb{P}^k$.
We say that $X$ is {\it uniformly expanding} if there exist $\eta > 1$ and $C>0$ such that
$||Df_x^n(v)|| > C\eta^n||v||$ for every $x \in X$, $v \in T_x \mathbb P^k$, and $n\in\mathbb N$.
We say that $f$ is {\it hyperbolic} if $J(f)$ is uniformly expanding.

We will consider the Fubini-Study metric on $\mathbb P^k$. We will denote by $\dist$ the corresponding distance,
and by $B(x,r)$ the open ball centred at $x$ and of radius $r$.
For an open set $V \subset \bbP^k$, we denote by $\volume(V)$ the volume of $V$
with respect to the Fubini-Study metric. Given a holomorphic map $g\colon V\to \mathbb P^k$,
we denote by $\Jac g(x)$ the Jacobian of $g$ at $x \in V$, i.e., the determinant of the differential $Dg_x$. 

We also fix the positive constant $M>0$ defined as
\begin{equation}\label{eq:def-M}
M\defeq  \log 
 \sup_{x \in \mathbb P^k} \sup_{v \in \mathbb C^k} \frac{||Df_x (v)||}{||v||}
\end{equation}
and observe that $e^{M}$ dominates the Lipschitz constant of $f$. In particular,  we have
$\dist (f(x_1),f(x_2))\leq e^M \dist (x_1,x_2)$ for every $x_1,x_2 \in \mathbb P^k$,
and $f (B(x,r))\subseteq B(f(x), e^{M} r)$ for every $x \in \mathbb P^k$ and $r>0$.

\subsection{Distortion estimates along generic inverse branches} \label{subsec_dist}
We fix in this subsection a holomorphic endomorphism $f : \bbP^k \to \bbP^k$
of algebraic degree $d \ge 2$ and a measure $\nu\in \mathcal M^+ (f)$.
All the objects and the constants that we introduce in this subsection depend on $f$ and $\nu$.
   We denote by $ \chi_1>\ldots > \chi_l = \chi_{\min}>0$
   the (distinct)
   Lyapunov exponents  of $\nu$, by $k_1, \dots, k_l$ their
respective multiplicities,
 and by $L_\nu = L_\nu(f) \defeq \sum_{j=1}^{l}k_j\chi_j$ their sum.
Recall that we have $L_\nu = \int \log|\Jac f|d\nu$
by Birkhoff's ergodic theorem.

\medskip
Consider the \emph{orbit space} of $f$
\[O \defeq \left\{\hat{x}
= \{x_n\}_{n \in \bbZ} \in (\mathbb P^k)^{\mathbb Z} \colon x_{n+1} 
= f(x_n)\quad  \forall n \in \mathbb Z\right\}\]
 and the right shift map $T\colon O \to O$
defined as $T(\hat x) = \{x_{n+1} \}_{n\in \mathbb Z}$ for $\hat x =\{x_n\}_{n\in \mathbb Z}$. 
Given $\eta>0$,  a function $\phi \colon O \to (0,1]$ is 
said to be
{\it $\eta$-slow} if for any $\hat{x} \in O$ we have 
$$e^{-\eta}\phi(\hat{x}) \le \phi (T(\hat{x})) \le e^{\eta}\phi(\hat{x}).$$

\medskip

We now recall the construction of the \emph{lift} $\hat \nu$ of $\nu$ to 
$O$; see \cite[Section 10.4]{cornfeld2012ergodic} and \cite[Section 2.7]{PU}.
For $n\in \mathbb Z$, 
we let $\pi_n\colon  O \to \bbP^k$ be the projection map defined by
$\pi_n(\hat{x}) = x_n$, where $\hat x = \{x_n\}_{n\in \mathbb Z}$. We
write $\pi\defeq \pi_0$ for brevity. Observe that $\pi_n \circ T = f\circ \pi_n$ for all $n\in \mathbb Z$.

Consider the $\sigma$-algebra $\hat {\mathcal B}$ on $O$ generated by the sets of the form
$$A_{n,B}\defeq \pi_n^{-1} (B) = \left\{\hat x \colon x_n \in B\right\}$$
with $n\leq 0$ and $B\subseteq \mathbb P^k$ a Borel set.
For all such sets $A_{n,B}$, set $$\hat \nu(A_{n,B})\defeq \nu(B).$$ Then, by the invariance of $\nu$ and the fact that 
$x_n \in B$ if and only if $x_{n-m}\in f^{-m} (B)$ with $m\geq 0$, we see that
$\hat \nu$ is well-defined on the sets $A_{n,B}$ as it satisfies
$\hat \nu (A_{n,B})= \hat \nu (A_{n-m,B})$
for all $m\geq 0$. Similarly, for all $m\geq 0$ and Borel sets $B_0, \dots, B_{-m}\subseteq \mathbb P^k$, 
we have
\[
\hat \nu
\big(
\{
\hat x \colon x_0 \in B_0, \ldots, x_{-m} \in B_{-m}
\}
\big)
=
\nu \left(f^{-m} (B_0) \cap f^{-m+1} (B_1) \cap \ldots \cap B_{-m}\right).
\]
We can then extend $\hat \nu$ to a probability measure on $\hat {\mathcal B}$, that we still
denote by $\hat \nu$. By construction, $\hat \nu$
is $T$-invariant and satisfies $\pi_* (\hat \nu)=\nu$. As $\nu$ is ergodic, one can prove that 
$\hat \nu$ is also ergodic.

\medskip

Recall that the critical set $C(f)$ of $f$ is the set of points $x \in \bbP^k$ at which the differential 
$Df_x$ is not invertible. As all the Lyapunov exponents of $\nu$ are finite, 
and their sum is equal to $\int \log |Df|d\nu$,
we have in particular $\nu(C(f))=0$.
Set $$Z \defeq \big\{ \hat{x} \in O :  x_n \notin C(f) \quad \forall n \in \bbZ \big\}.$$
Then the set $Z$ is $T$-invariant and satisfies $\hat \nu(Z) = 1$.
For every $\hat x \in Z$, we denote by $f_{\hat x}^{-n}$ the inverse branch of $f^n$
defined in a neighbourhood of $x_0$ and such that
$f^{-n}_{\hat x} (x_0) = x_{-n}$.

\medskip

The following result is stated in \cite[Theorem A]{BD} (see also \cite[Theorem 1.4]{BDM})
in the case where $\nu$ is the measure of maximal entropy of $f$.
The same statement and proof hold for any measure in $\nu\in\mathcal M^+(f)$,
as stated at the end of the Introduction -- and used in later sections -- of the same paper.

\begin{thm}\label{thm_distortion}
For every $0<2\eta< \gamma \ll \chi_{\min}$ and $\hat \nu$-almost every $\hat{x} \in Z$, there exist
	\begin{enumerate}
	\item  an integer $n_{\hat{x}} \ge 1$ and real numbers $h_{\hat{x}} \ge 1$ and $0<r_{\hat{x}}, \rho_{\hat{x}} \le 1$, 
	\item a sequence $\{\varphi_{\hat{x},n}\}_{n\ge0}$ of injective holomorphic maps
	$$\varphi_{\hat{x},n} : B(x_{-n}, r_{\hat{x}}e^{-n(\gamma+2\eta)}) \to \bbD^k(\rho_{\hat{x}}e^{n\eta})$$
	sending $x_{-n}$ to $0$ and satisfying
	\begin{equation*}
	e^{n(\gamma-2\eta)}
	\dist (u,v) \le |\varphi_{\hat{x},n} (u)- \varphi_{\hat{x},n} (v)| \le e^{n(\gamma+3\eta)}h_{\hat{x}} \dist (u,v)
	\end{equation*}
for every $n\in \mathbb N$ and $u,v\in B(x_{-n}, r_{\hat{x}}e^{-n(\gamma+2\eta)})$;
\item
 a sequence $\{\mathcal{L}_{\hat{x},n}\}_{n \ge 0}$ of linear maps from $\mathbb C^k$
 to $\mathbb C^k$
	which stabilize each $$H_j \defeq \{0\} \times \ldots \times \bbC^{k_j} \times \ldots  \times \{0\},$$ 
	satisfy
	\begin{equation*}
	e^{-n\chi_j+n(\gamma-\eta)}|v| \le |\mathcal{L}_{\hat{x},n}(v)| \le e^{-n\chi_j+n(\gamma+\eta)}|v|
	\quad 
	\mbox{for all } n\in \mathbb N \mbox{ and }
	v \in H_j,
	\end{equation*}
	and such that 
	the diagram 
	\begin{center}
		\begin{tikzcd}
			B(x_0, r_{\hat{x}}) \arrow[r, "f_{\hat{x}}^{-n}"] \arrow[d, "\varphi_{\hat{x},0}"]
			& B(x_{-n}, r_{\hat{x}}e^{-n(\gamma+2\eta)}) \arrow[d, "\varphi_{\hat{x},n}"] \\
			\bbD^k(\rho_{\hat{x}}) \arrow[r, "\mathcal{L}_{\hat{x},n}"]
			& \bbD^k(\rho_{\hat{x}}e^{n\eta})
		\end{tikzcd}
	\end{center}
  commutes for all $n \ge n_{\hat{x}}$.
	\end{enumerate}
Moreover, the functions 
$\hat{x} \mapsto h_{\hat{x}}^{-1}, r_{\hat{x}}, \rho_{\hat{x}}$ are measurable and $\eta$-slow on $Z$.
\end{thm}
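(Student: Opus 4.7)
The plan is to prove Theorem \ref{thm_distortion} as a quantitative Pesin-type normal form for the holomorphic inverse branches $f_{\hat x}^{-n}$ along $\hat \nu$-generic orbits. I would proceed in three main steps.

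First, apply Oseledets' multiplicative ergodic theorem to the cocycle $Df$ over $(Z, \hat \nu, T)$, which is well-defined since $L_\nu = \int \log|\Jac f|\, d\nu < \infty$. This yields, on a $T$-invariant set of full $\hat \nu$-measure, a measurable invariant splitting $T_{x_n}\mathbb P^k = \bigoplus_{j=1}^l E_j(x_n)$ with $\dim_\mathbb C E_j(x_n) = k_j$ and asymptotic expansion rate $\chi_j$ in each direction $E_j$. On top of this splitting, construct Pesin (Lyapunov) inner products $\langle \cdot, \cdot\rangle_{\hat x, n}^\eta$ on each $T_{x_n}\mathbb P^k$ by the standard averaged-norm formula; these make the Oseledets subspaces orthogonal, sandwich $Df|_{E_j}$ between $e^{\chi_j - \eta}$ and $e^{\chi_j + \eta}$, and have passage matrices to the Fubini--Study metric that are tempered along the orbit. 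Choose a $\mathbb C$-linear unitary identification $A_{\hat x, n}\colon (T_{x_{-n}}\mathbb P^k, \langle \cdot, \cdot \rangle_{\hat x, -n}^\eta) \to \mathbb C^k$ sending each $E_j(x_{-n})$ to $H_j$ with its standard Hermitian structure.

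Second, define the charts as
\[
\varphi_{\hat x, n} \defeq e^{n\gamma} \cdot A_{\hat x, n} \circ \psi_{x_{-n}},
\]
where $\psi_{x_{-n}}$ is a fixed holomorphic trivialization at $x_{-n}$ and the rescaling factor $e^{n\gamma}$ is responsible for the $e^{n\gamma}$ terms in the bi-Lipschitz bounds of (2) and the contraction bounds of (3). The linear maps are then forced to be $\mathcal L_{\hat x, n} \defeq \varphi_{\hat x, n} \circ Df^{-n}_{\hat x}(x_0) \circ \varphi_{\hat x, 0}^{-1}$, which stabilize each $H_j$ and inherit the Pesin sandwich inequality applied to $Df^{-n}$, giving the estimate $e^{-n\chi_j + n(\gamma \pm \eta)}|v|$ on $H_j$. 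To upgrade this linear commutation to the actual commutative diagram, one exploits holomorphic rigidity: because $x_n \notin C(f)$ for all $n$, each $f_{\hat x}^{-n}$ is holomorphic on a definite neighbourhood of $x_0$, and one may redefine $\varphi_{\hat x, n}$ by post-composing with the unique holomorphic correction that enforces the commutativity while preserving the differential $\mathcal L_{\hat x, n}$ at $x_{-n}$. Cauchy estimates applied to the Taylor remainder of $f_{\hat x}^{-n}$, expressed in the rescaled coordinates and controlled by the ratios of domain radius $r_{\hat x} e^{-n(\gamma+2\eta)}$ to image polydisc radius $\rho_{\hat x} e^{n\eta}$, then give the bi-Lipschitz estimates of (2), with $h_{\hat x}$ absorbing the non-isometry between the Fubini--Study and Pesin metrics at the initial point $x_0$.

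The main obstacle is the simultaneous bookkeeping required to show that all the constants $n_{\hat x}, h_{\hat x}, r_{\hat x}, \rho_{\hat x}$ can be chosen measurably in $\hat x$ and $\eta$-slow in the sense of the paper. This is handled by applying Birkhoff's theorem to $\log \|Df\|$, $\log |\Jac f|$, the angles between the Oseledets subspaces, and the logarithmic distance to the critical set -- all of which are $\hat \nu$-integrable under our hypotheses -- and then invoking the standard Pesin temperedness lemma to convert almost-sure limits along the orbit into uniform $\eta$-slow control of the constants. The condition $2\eta < \gamma \ll \chi_{\min}$ is used precisely to guarantee that the shrinking of the domain at rate $e^{-n(\gamma+2\eta)}$ dominates both the tempered shrinking of the Pesin ball of injectivity and the loss coming from the nonlinear Taylor tail, so that the inverse branch $f_{\hat x}^{-n}$ remains single-valued and well-controlled on $B(x_0, r_{\hat x})$ for all $n \geq n_{\hat x}$.
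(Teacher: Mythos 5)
The paper does not give a proof of Theorem~\ref{thm_distortion}: it is imported verbatim as a black box from Berteloot--Dupont--Molino \cite{BDM} and Berteloot--Dupont \cite{BD} (see the paragraph immediately preceding the statement, which attributes it to \cite[Theorem~A]{BD} and \cite[Theorem~1.4]{BDM}). So there is no ``paper's own proof'' to compare against; the relevant question is whether your sketch reconstructs the BDM argument, and it does not.

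Your framework (Oseledets splitting, Pesin/Lyapunov inner products, tempered constants via Birkhoff, and $\eta$-slowness) is the right infrastructure and is indeed what BDM set up. But the heart of the theorem --- the claim that the entire nonlinear map $f_{\hat x}^{-n}$ becomes \emph{exactly} linear in the charts $\varphi_{\hat x,0}$, $\varphi_{\hat x,n}$, for all $n$ simultaneously, with \emph{uniform} bi-Lipschitz control $e^{n(\gamma-2\eta)} \le \mathrm{Lip}(\varphi_{\hat x,n}) \le e^{n(\gamma+3\eta)} h_{\hat x}$ --- is dismissed in a single sentence: ``one may redefine $\varphi_{\hat x,n}$ by post-composing with the unique holomorphic correction that enforces commutativity.'' This step is vacuous as stated: once $\varphi_{\hat x,0}$ and $\mathcal L_{\hat x,n}$ are fixed, the commutative diagram forces $\varphi_{\hat x,n} = \mathcal L_{\hat x,n}\circ\varphi_{\hat x,0}\circ f^n$ on $f_{\hat x}^{-n}(B(x_0,r_{\hat x}))$, so there is no freedom to ``correct.'' All the content lies in proving that this forced map extends to the claimed ball, lands in the claimed polydisc, and satisfies the bi-Lipschitz bounds of item~(2) with constants uniform in $n$. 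That is exactly where your sketch fails: Cauchy estimates on the Taylor remainder of $f_{\hat x}^{-n}$ do not deliver this, because the quadratic part of an $n$-fold composition accumulates over $n$ steps and can grow exponentially --- a naive bound loses an unacceptable factor per iterate. The actual BDM proof handles this by a normal-form (resonance) analysis for non-autonomous holomorphic contractions along the Oseledets flag, together with a careful telescoping/fixed-point construction of the chart sequence, which is the non-trivial core of their papers. Relatedly, note that the $\mathcal L_{\hat x,n}$ are not the differentials $Df^{-n}_{\hat x}(x_0)$ conjugated by the frames, as your sketch has them: they are the block components produced by the normal form, and the statement that they stabilize each $H_j$ (not merely the flag) already encodes the output of the resonance removal. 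In short, the proposal correctly identifies the scaffolding but leaves the central linearization-with-estimates step --- the theorem itself --- unproved.
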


In particular, 
for every $n\in \mathbb N$ and $\hat x$ as in the statement,
the inverse branch
 $f_{\hat x}^{-n}$ is well-defined on the ball $B(x_0, r_{\hat x})$.

\begin{cor}\label{cor_thm_distortion}
	With the same assumptions and notations as in Theorem \ref{thm_distortion} and Section \ref{ss:def_ellipses},
	for $\hat \nu$-almost all $\hat x\in Z$ and all
	 $t, t_1, \dots, t_k\in (0,1]$,
	  $n \ge n_{\hat{x}}$,
	 and
  $y,w \in B(x_0, r_{\hat{x}})$,
	we have
	\begin{enumerate}
	\item
$e^{-n (L_\nu+10k\eta) }
\leq | \Jac f^{-n}_{\hat x}(y)|\leq e^{-n(L_\nu-10k\eta)} $;
	\item
$e^{-20kn\eta}\leq | \Jac f^{-n}_{\hat x}(y)| \cdot |\Jac f^{-n}_{\hat x}(w)|^{-1} \leq e^{20kn\eta}$;
\item $ \mathcal E_{x_{-n}}( t_j r_{\hat{x}}h^{-1}_{\hat{x}}e^{-n(\chi_{j}+10\eta)})
\subset
f^{-n}_{\hat{x}}(\mathcal E_{x_0}(t_jr_{\hat{x}})) 
\subset 
\mathcal E_{x_{-n}}( t_j r_{\hat{x}}h_{\hat{x}}e^{-n(\chi_{j}-10\eta)})$;
\item $ (tr_{\hat x} 
)^{2k} e^{-2 n (L_{\nu}+10k\eta)}
\leq \volume ( \mathcal E_{x_{-n}}  (t r_{\hat x} e^{-n \chi_j} 
)   )
\leq (t r_{\hat x} 
)^{2k} e^{-2 n (L_{\nu}-10k\eta)}$;

\item
$(t r_{\hat x} h_{\hat x}^{-1} )^{2k} e^{-2n (L_{\nu}+20k\eta)}
 \leq \volume (f^{-n}_{\hat{x}}(B({x_0}, t r_{\hat{x}})) )
\leq (t r_{\hat x}  h_{\hat x})^{2k} e^{-2n( L_{\nu}-20k\eta)}$.
	\end{enumerate}
\end{cor}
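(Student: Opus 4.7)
My plan is to factorize the inverse branch via the commutative diagram in Theorem \ref{thm_distortion}, writing
\[
f^{-n}_{\hat{x}} = \varphi_{\hat{x},n}^{-1}\circ \mathcal{L}_{\hat{x},n}\circ \varphi_{\hat{x},0}\qquad\text{on } B(x_0, r_{\hat{x}}),
\]
and then to read each of the five assertions off the corresponding estimates on the three factors. The guiding remark is that, although the bounds in Theorem \ref{thm_distortion} involve both $\gamma$ and $\eta$, the $\gamma$ contributions of the outer and inner factors always appear with opposite signs and cancel when the three maps are composed, so only $\eta$-terms survive. Moreover, all subexponential nuisance factors (constants such as $\pi^k$ or $h_{\hat{x}}^{\pm k}$, together with Fubini--Study corrections) can be absorbed into $e^{\pm O(nk\eta)}$ by enlarging the measurable integer $n_{\hat{x}}$, using the $\eta$-slowness of $h_{\hat{x}}$.

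For (1), the chain rule gives $|\Jac f^{-n}_{\hat{x}}(y)| = |\det \mathcal{L}_{\hat{x},n}|\cdot |\Jac \varphi_{\hat{x},n}^{-1}|\cdot |\Jac \varphi_{\hat{x},0}(y)|$. Since $\mathcal{L}_{\hat{x},n}$ is block-diagonal for $\bigoplus_j H_j$ and each block has all its singular values in $[e^{-n\chi_j+n(\gamma-\eta)},e^{-n\chi_j+n(\gamma+\eta)}]$, its complex determinant has modulus in $[e^{-nL_\nu+nk(\gamma-\eta)},e^{-nL_\nu+nk(\gamma+\eta)}]$. Each $\varphi_{\hat{x},m}$ is holomorphic and bi-Lipschitz on a convex ball, hence its complex singular values lie pointwise between its bi-Lipschitz constants, so $|\Jac \varphi_{\hat{x},m}|$ lies between the $k$-th powers of those constants. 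Substituting the bi-Lipschitz constants from Theorem \ref{thm_distortion}, multiplying the three factors, cancelling the $\gamma$ terms, and absorbing the residual $h_{\hat{x}}^{\pm k}$ into $e^{\pm O(nk\eta)}$ for $n\ge n_{\hat{x}}$ yields (1); claim (2) is then immediate by applying (1) separately at $y$ and at $w$.

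For (3), the same factorization transports $\mathcal E_{x_0}(t_j r_{\hat{x}})$ through $\varphi_{\hat{x},0}$ (which distorts each complex axis by a factor in $[1,h_{\hat{x}}]$), then through $\mathcal{L}_{\hat{x},n}$ (which rescales the $j$-th block by a factor in $[e^{-n\chi_j+n(\gamma-\eta)},e^{-n\chi_j+n(\gamma+\eta)}]$), and finally through $\varphi_{\hat{x},n}^{-1}$ (bi-Lipschitz with reciprocals of the constants of Theorem \ref{thm_distortion}); the same $\gamma$-cancellation, now retaining the explicit $h_{\hat{x}}^{\pm 1}$ factors that are kept in the statement, delivers (3). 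For (4), in a local chart $\mathcal E_{x_{-n}}(tr_{\hat{x}}e^{-n\chi_j})$ is, up to a uniformly small Fubini--Study correction, the image of a product of complex disks of radii $tr_{\hat{x}}e^{-n\chi_j}$ with respective multiplicities $k_j$, so its Euclidean volume equals $\pi^k(tr_{\hat{x}})^{2k}e^{-2nL_\nu}$; absorbing $\pi^k$ and the chart correction into $e^{\pm 20kn\eta}$ for $n\ge n_{\hat{x}}$ yields (4). Finally, (5) follows by applying (3) with all $t_j=t$, observing that $\mathcal E_{x_0}(tr_{\hat{x}},\dots,tr_{\hat{x}})$ is comparable to the Fubini--Study ball $B(x_0,tr_{\hat{x}})$, and estimating the two sandwiching ellipses via (4).

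The main obstacle is essentially careful bookkeeping: promoting the global bi-Lipschitz bounds on each $\varphi_{\hat{x},m}$ to pointwise bounds on the complex Jacobian, handling $\det \mathcal{L}_{\hat{x},n}$ without assuming its action on each $H_j$ is scalar, and verifying that the accumulated subexponential corrections can be uniformly absorbed for $\hat\nu$-almost every $\hat{x}$, using only the $\eta$-slowness of $h_{\hat{x}}, r_{\hat{x}}, \rho_{\hat{x}}$ and the freedom to enlarge $n_{\hat{x}}$. Once (1) is in place, assertions (3)--(5) are a geometric unpacking of the same commutative diagram combined with elementary volume computations.
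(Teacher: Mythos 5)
Your factorization of $f^{-n}_{\hat{x}}$ through the commutative diagram, and the way you read off assertions (1), (2), (3), and (5), is essentially the paper's approach. But your treatment of (4) has a genuine gap.

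You claim that $\mathcal E_{x_{-n}}(t r_{\hat{x}} e^{-n\chi_j})$ is, "up to a uniformly small Fubini--Study correction, the image of a product of complex disks," and that its volume is therefore $\approx \pi^k (t r_{\hat{x}})^{2k} e^{-2nL_\nu}$ with only bounded nuisance factors to absorb. This would be correct if the basis $(\ell_j)_{x_{-n}}$ in the definition of $\mathcal E_{x_{-n}}$ were (approximately) orthonormal, but it is not: the basis is adapted to the Oseledets filtration at $x_{-n}$, and the angles between directions attached to distinct Lyapunov exponents can shrink as $n\to\infty$. The volume of the ellipse picks up the Gram determinant of this basis, which is \emph{not} uniformly bounded below. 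This is in fact why the right-hand and left-hand sides of (4) differ by a factor $e^{\pm 20 k n\eta}$ that grows with $n$; if the discrepancy were truly a "uniformly small chart correction" as you say, a fixed constant would do and no $n$-dependent exponential would be needed. The paper's proof spends essentially all its words on (4) precisely to handle this: it uses the bi-Lipschitz bounds on $\varphi_{\hat{x},n}$ from Theorem \ref{thm_distortion}(2) to show that the projective distances in $\mathbb P(T_{x_{-n}}\mathbb P^k)$ between directions associated to distinct exponents are bounded below by a constant times $e^{-5n\eta}$, and then compares $\mathcal E_{x_{-n}}$ with an ellipse whose axes \emph{are} coordinate-parallel, picking up exactly the $e^{\pm O(kn\eta)}$ loss. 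You need to incorporate this angle estimate (or an equivalent control of the Gram determinant of the basis at $x_{-n}$) into your proof of (4); as written, the lower bound in (4) does not follow.

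As a minor point, since (5) is derived from (3) and (4), the gap in (4) also leaves (5) unproved until (4) is fixed, though your deduction of (5) from (3)+(4) matches the paper.
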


\begin{proof}
The assertions 
(1) and (3) follow directly from Theorem \ref{thm_distortion} (2) and (3). The assertion
 (2) follows from (1). The assertion 
(4) follows from the fact that the distances in $ \mathbb P (T_{x_{-n}} \mathbb P^k) \simeq \mathbb P^{k-1}$
between the directions associated to distinct Lyapunov exponents 
at $x_{-n}$ are larger (up to a multiplicative constant independent of $n$) than $e^{-5n\eta}$,
 again by Theorem
\ref{thm_distortion} (2). This allows one
 to compare the volume of $\mathcal E_{x_{-n}}  (t r_{\hat x} e^{-n \chi_j})$
 with that of an ellipse in $\mathbb C^k$, whose axes are parallel
to the coordinate planes. The assertion (5) is a consequence of (3),
 applied with $t_j = t$ for all $j=1,\ldots,k$, and (4).
\end{proof}

\begin{defn}\label{d:Z}
We define $Z_\nu\subseteq Z$ to be the full $\hat \nu$-measure
set of elements $\hat x\in Z$
 satisfying the conditions in Theorem \ref{thm_distortion} and Corollary \ref{cor_thm_distortion}.
For $\alpha>0$, we also define
\begin{equation*}
Z_{\nu,\alpha} \defeq \big\{ \hat x \in Z_\nu \colon n_{\hat x} < \alpha^{-1}, r_{\hat x} > \alpha, h_{\hat x} < \alpha^{-1}  \big\}.
\end{equation*}
\end{defn}

It follows from the 
definition
that, as $\alpha \to 0$, the sets $Z_{\nu,\alpha}$ increase to $Z_\nu$. 
In particular, we have $\hat \nu (Z_{\nu,\alpha})\to 1$ as $\alpha \to 0$.

\begin{cor} \label{cor_cordistortion}
	For every $0<\epsilon\ll \chi_{\min}$ sufficiently small,
	there exist
	 $Z'_\nu(\epsilon) \subseteq Z_\nu$,
	 $n'(\epsilon) \ge 1$, and
	$r(\epsilon) \in (0,1)$
	such that
	\begin{enumerate}
	\item  $\hat \nu(Z(\epsilon)) > 1-\epsilon$;
\item 	$n_{\hat{x}} \leq n(\epsilon)$ and $r_{\hat{x}} \ge r(\epsilon)$
	for all $\hat{x} \in Z'_\nu(\epsilon)$;
	\item
for all 
$t, t_1, \ldots, t_k\in (0,1]$,
$n\geq n(\epsilon)$,
 $\hat x \in Z'_\nu (\epsilon)$,
 and
$y,w \in B(x_0, r(\epsilon))$
we have
\begin{enumerate}
\item
$e^{-n (L_\nu+k\epsilon) }
	\leq | \Jac  f^{-n}_{\hat x}(y)|\leq e^{-n(L_\nu-k\epsilon)}$;
	\item
$e^{-kn\epsilon}\leq | \Jac f^{-n}_{\hat x}(y)| \cdot |\Jac f^{-n}_{\hat x}(w)|^{-1} \leq e^{kn\epsilon}$;
\item $ \mathcal E_{x_{-n}}( t_j r(\epsilon)
e^{-n(\chi_{j}+\epsilon)})
	\subset
	f^{-n}_{\hat{x}}(\mathcal{E}_{x_0}(t_jr(\epsilon))) 
	\subset 
	\mathcal E_{x_{-n}}( t_j r(\epsilon)
	e^{-n(\chi_{j}-\epsilon)})$;
	\item $ ( t r(\epsilon) 
	)^{2k} e^{-2 n( L_{\nu}+ k \epsilon)} 
	\leq \volume ( \mathcal E_{x_{-n}}  (t r(\epsilon) e^{-n \chi_j}
	)   )
	\leq (t r(\epsilon) 
	)^{2k} e^{-2n (L_{\nu}-k \epsilon)}$;
\item
$(t r(\epsilon)  
 )^{2k} e^{-2n (L_{\nu} +k \epsilon)} 
\leq \volume (f^{-n}_{\hat{x}}(B({x_0}, t r(\epsilon))) )\leq
 (t r(\epsilon)  
 )^{2k} e^{-2n (L_{\nu}-k\epsilon)}$,
\end{enumerate}
	\end{enumerate}
	where $n_{\hat x}$, 
	$h_{\hat x}$, and $r_{\hat x}$ are as in Theorem \ref{thm_distortion}.
\end{cor}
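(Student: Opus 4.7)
The plan is to derive the uniform estimates of the corollary from the $\hat{x}$-pointwise estimates of Theorem \ref{thm_distortion} and Corollary \ref{cor_thm_distortion} via a Lusin-type approximation on the measurable exhaustion $\{Z_{\nu,\alpha}\}_{\alpha > 0}$ of $Z_\nu$, at the cost of slightly weakening the exponential rates and requiring $n$ to be sufficiently large.

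First, I would fix $\eta$ small relative to $\epsilon$, for concreteness $\eta := \epsilon/40$, together with any $\gamma$ satisfying $2\eta < \gamma \ll \chi_{\min}$ (possible since $\epsilon \ll \chi_{\min}$ by hypothesis). Applying Theorem \ref{thm_distortion} and Corollary \ref{cor_thm_distortion} with this choice yields the full-measure set $Z_\nu$ and the measurable functions $\hat{x} \mapsto n_{\hat{x}}, r_{\hat{x}}, h_{\hat{x}}$ with the pointwise distortion estimates. Using that $\hat{\nu}(Z_{\nu,\alpha}) \to 1$ as $\alpha \to 0$, I would next pick $\alpha(\epsilon) > 0$ small enough that $\hat{\nu}(Z_{\nu,\alpha(\epsilon)}) > 1-\epsilon$, and set
\[
Z'_\nu(\epsilon) := Z_{\nu,\alpha(\epsilon)}, \qquad r(\epsilon) := \alpha(\epsilon).
\]
By the definition of $Z_{\nu,\alpha}$, every $\hat{x} \in Z'_\nu(\epsilon)$ satisfies $n_{\hat{x}} < \alpha(\epsilon)^{-1}$, $r_{\hat{x}} > r(\epsilon)$, and $h_{\hat{x}} < \alpha(\epsilon)^{-1}$, so assertions (1) and (2) hold provided $n(\epsilon) \ge \alpha(\epsilon)^{-1}$.

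For assertion (3), the task is to go from the $\hat{x}$-dependent bounds of Corollary \ref{cor_thm_distortion} (whose error rates involve $\eta$ and whose radii involve $r_{\hat{x}}$ and $h_{\hat{x}}$) to the uniform ones of the present corollary (whose error rates involve $\epsilon$ and whose radii involve only $r(\epsilon)$). To replace $r_{\hat{x}}$ by $r(\epsilon)$, I would apply Corollary \ref{cor_thm_distortion} with the rescaled arguments $t \cdot r(\epsilon)/r_{\hat{x}}$ and $t_j \cdot r(\epsilon)/r_{\hat{x}}$, which lie in $(0,1]$ since $r(\epsilon) \le r_{\hat{x}}$; this produces identical estimates with $r(\epsilon)$ in place of $r_{\hat{x}}$. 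The constants $10k\eta$ and $20k\eta$ appearing in the exponentials are dominated by $k\epsilon$ under the choice $\eta = \epsilon/40$, so (3)(a), (b), (d) follow at once.

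The main obstacle, such as it is, is the absorption of the factors $h_{\hat{x}}^{\pm 1}$ and $h_{\hat{x}}^{\pm 2k}$ appearing in assertions (3) and (5) of Corollary \ref{cor_thm_distortion} into the exponential rates, so that they disappear from the final estimates. An elementary computation in each case shows that this absorption is valid whenever $h_{\hat{x}} \le e^{n(\epsilon - 20\eta)}$, and since $h_{\hat{x}} < \alpha(\epsilon)^{-1}$ on $Z'_\nu(\epsilon)$, this holds once
\[
n \ge n(\epsilon) := \max\!\bigl(\alpha(\epsilon)^{-1}, \; 2\log \alpha(\epsilon)^{-1}/\epsilon\bigr),
\]
with the factor $2$ comfortably absorbing $(\epsilon - 20\eta)^{-1}$ under $\eta = \epsilon/40$. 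This is precisely why $n(\epsilon)$ must be allowed to grow as $\epsilon \to 0$, and completes assertions (3)(c) and (3)(e).
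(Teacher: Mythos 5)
Your proof is correct and takes essentially the same route as the paper: pick $\eta$ proportional to $\epsilon$, take $Z'_\nu(\epsilon) = Z_{\nu,\alpha(\epsilon)}$ with $\alpha(\epsilon)$ small enough that $\hat\nu(Z_{\nu,\alpha(\epsilon)}) > 1-\epsilon$, rescale the free parameters $t,t_j$ to replace $r_{\hat x}$ by $r(\epsilon) \le r_{\hat x}$, and absorb the $h_{\hat x}^{\pm 1}$ (bounded by $\alpha(\epsilon)^{\mp 1}$) into the exponentials by increasing the threshold $n'(\epsilon)$. The one small service you render beyond the paper's terse proof is making the rescaling $t \mapsto t\,r(\epsilon)/r_{\hat x}$ explicit, which the paper leaves implicit when it writes the intermediate estimates directly in terms of $r(\epsilon)$.
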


\begin{proof}
By choosing $\alpha = \alpha (\epsilon)$ sufficiently small, 
Corollary \ref{cor_thm_distortion} and the 
 Definition 
 \ref{d:Z}
  of $Z_{\nu, \alpha}$
give 
the existence of a set $Z''_\nu (\epsilon) \defeq Z_{\nu, \alpha (\epsilon)}$
and numbers $r(\epsilon)$, $n'(\epsilon)$
 satisfying the 
 properties in the statement, with (3c) and (3e) replaced by 
 \[
\mathcal E_{x_{-n}}( t_j r(\epsilon)
\alpha(\epsilon)
e^{-n(\chi_{j}+\epsilon/2)})
	\subset
	f^{-n}_{\hat{x}}(\mathcal{E}_{x_0}(t_jr(\epsilon))) 
	\subset 
	\mathcal E_{x_{-n}}( t_j r(\epsilon)
\alpha(\epsilon)^{-1}	
	e^{-n(\chi_{j}-\epsilon/2)})
	 \]
 and
 \[
(t r(\epsilon)  
\alpha(\epsilon))^{2k} e^{-2n (L_{\nu} +k \epsilon/2)} 
\leq \volume (f^{-n}_{\hat{x}}(B({x_0}, t r(\epsilon))) )\leq
 (t r(\epsilon) 
 \alpha(\epsilon)^{-1})^{2k} e^{-2n (L_{\nu}-k\epsilon/2)},
 \]
 respectively.
 Since all the Lyapunov exponents  of $\nu$ are strictly positive, the assertion follows up to
 increasing $n'(\epsilon)$.   
\end{proof}

\begin{lem} \label{cor_cordistortion4}
 For every $0<\epsilon\ll \chi_{\min}$ sufficiently small,
 there exist $n(\epsilon)\in \mathbb N$,
 a subset 
$Z_\nu(\epsilon) \subseteq Z'_\nu(\epsilon)$
with $\hat \nu (Z_\nu' (\epsilon)\setminus Z_\nu (\epsilon))<\epsilon$, and, for all $\hat x \in Z_\nu(\epsilon)$,
 a sequence $\{n_l\}_{l \ge 0}= \{n_l (\hat x)\}_{l \ge 0}$ such that
 \begin{enumerate}
 \item $n'(\epsilon) \leq n_0 \leq n(\epsilon)$;
 \item  $n_{l+1}-n_l <\epsilon n_l$ for all $l\geq 0$;
 \item  $T^{n_l}(\hat x)  \in Z'_\nu(\epsilon)$ for all $l\geq 0$,
 \end{enumerate}
 where $Z'_\nu(\epsilon)$ and $n'(\epsilon)$ are as in Corollary \ref{cor_cordistortion}. 
\end{lem}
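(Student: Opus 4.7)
The strategy is to combine Birkhoff's ergodic theorem with Egorov's theorem and Poincar\'e recurrence, targeting, however, a nested subset strictly smaller than $Z'_\nu(\epsilon)$. Targeting $Z'_\nu(\epsilon)$ directly is doomed: the bound $\hat\nu(Z'_\nu(\epsilon))>1-\epsilon$ is compatible with gaps of relative size $\epsilon/(1+\epsilon)$, which does not strictly beat $\epsilon$. To circumvent this I fix an auxiliary parameter $\delta=\delta(\epsilon)$ with $\delta<\epsilon/(4(1+\epsilon))$ and apply Corollary \ref{cor_cordistortion} with $\delta$ in place of $\epsilon$ to produce $Z'_\nu(\delta)\subseteq Z'_\nu(\epsilon)$ (the inclusion follows from the monotonicity in $\alpha$ of the sets $Z_{\nu,\alpha}$ in Definition \ref{d:Z}) with $\hat\nu(Z'_\nu(\delta))>1-\delta$; any return time to $Z'_\nu(\delta)$ is automatically a return time to $Z'_\nu(\epsilon)$.

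By ergodicity of $(T,\hat\nu)$ and Birkhoff's theorem applied to $\mathbf{1}_{Z'_\nu(\delta)}$,
\[
\frac{1}{N}\sum_{j=0}^{N-1}\mathbf{1}_{Z'_\nu(\delta)}(T^j\hat x)\xrightarrow[N\to\infty]{}\hat\nu(Z'_\nu(\delta))>1-\delta
\]
for $\hat\nu$-almost every $\hat x\in Z_\nu$. Egorov's theorem then supplies a measurable set $\Omega\subseteq Z_\nu$ with $\hat\nu(Z_\nu\setminus\Omega)<\epsilon/3$ and an integer $N_\star\geq\max\{n'(\epsilon),2/\epsilon\}$ such that the above average exceeds $1-2\delta$ for every $\hat x\in\Omega$ and every $N\geq N_\star$. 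By continuity of measure (equivalently Poincar\'e recurrence applied to $Z'_\nu(\delta)$, whose measure is positive), I next choose $n(\epsilon)\geq N_\star$ large enough that the set
\[
\Sigma\defeq\{\hat x\in Z_\nu : T^j\hat x\notin Z'_\nu(\delta)\text{ for all }j\in[N_\star,n(\epsilon)]\}
\]
satisfies $\hat\nu(\Sigma)<\epsilon/3$, and define $Z_\nu(\epsilon)\defeq Z'_\nu(\epsilon)\cap(\Omega\setminus\Sigma)$, so that $\hat\nu(Z'_\nu(\epsilon)\setminus Z_\nu(\epsilon))\leq\hat\nu(Z_\nu\setminus\Omega)+\hat\nu(\Sigma)<\epsilon$.

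For each $\hat x\in Z_\nu(\epsilon)$ I let $n_0$ be the least $j\in[N_\star,n(\epsilon)]$ with $T^j\hat x\in Z'_\nu(\delta)$, and I inductively define $n_{l+1}$ as the first $j>n_l$ with the same property. Since $N_\star\geq n'(\epsilon)$ and $Z'_\nu(\delta)\subseteq Z'_\nu(\epsilon)$, conditions (1) and (3) are automatic. For (2), setting $m\defeq n_{l+1}$ one has $T^j\hat x\notin Z'_\nu(\delta)$ for every $j\in(n_l,m)$, so the non-visits in $[0,m)$ are at least $m-n_l-1$; the uniform density bound on $\Omega$ gives $m-n_l-1<2\delta m$, which rearranges to
\[
n_{l+1}-n_l<\frac{1+2\delta n_l}{1-2\delta},
\]
and this is strictly less than $\epsilon n_l$ as soon as $n_l>1/(\epsilon-2\delta(1+\epsilon))$, an inequality guaranteed by our choice of $\delta$ together with $n_l\geq N_\star\geq 2/\epsilon$. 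The main obstacle I anticipate is precisely this calibration: $\delta$ must be small enough for the density estimate to force gaps below $\epsilon n_l$, yet the resulting Egorov and recurrence losses must jointly be kept below $\epsilon$; the choice $\delta<\epsilon/(4(1+\epsilon))$ together with $N_\star\geq 2/\epsilon$ and $n(\epsilon)$ taken large accomplishes both simultaneously.
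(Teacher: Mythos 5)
Your opening diagnosis --- that ``targeting $Z'_\nu(\epsilon)$ directly is doomed'' --- is incorrect, and this is the key point to understand. You are right that a \emph{one-sided} density estimate (``the fraction of visits exceeds $1-\epsilon$'') only yields gaps of order $\tfrac{\epsilon}{1-\epsilon}n_l$, which does not beat $\epsilon n_l$. But the paper works directly with $Z'_\nu(\epsilon)$ by applying Birkhoff to $\mathbf{1}_{Z'_\nu(\epsilon)}$ with a \emph{two-sided} tolerance of $\epsilon/20$: for $n\geq n^\star(\hat x)$ one has $|S_n(\hat x)/n-\hat\nu(Z'_\nu(\epsilon))|\leq\epsilon/20$. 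Using the \emph{upper} bound at $n_l$ and the \emph{lower} bound at $n_{l+1}$, together with the exact increment $S_{n_{l+1}}=S_{n_l}+1$ (there are no visits strictly between), one gets
\[
\frac{n_{l+1}-n_l}{n_l}\leq\frac{\epsilon/10+1/n_l}{\hat\nu(Z'_\nu(\epsilon))-\epsilon/20},
\]
and once $n_l>10/\epsilon$ the right-hand side is $<\tfrac{\epsilon/5}{1-21\epsilon/20}<\epsilon$. The upper Birkhoff bound $S_{n_l}\leq n_l(\hat\nu+\epsilon/20)$, which is strictly stronger than the trivial $S_{n_l}\leq n_l$ whenever $\hat\nu<1-\epsilon/20$, is precisely the extra cancellation you thought was unavailable; with only the lower bound you would indeed get a constant $\approx 23/20>1$ in front of $\epsilon n_l$, and your intuition about that regime is correct.

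Because you judged the direct argument impossible, you introduced the nested family $Z'_\nu(\delta)\subseteq Z'_\nu(\epsilon)$, and this is where your proof has a genuine gap. The claimed inclusion does not follow from ``monotonicity in $\alpha$ of $Z_{\nu,\alpha}$.'' The constant $\alpha(\epsilon)$ in the proof of Corollary \ref{cor_cordistortion} is chosen so that $\hat\nu(Z_{\nu,\alpha(\epsilon)})>1-\epsilon$; as $\epsilon$ decreases this forces $\alpha(\epsilon)$ \emph{smaller}, which makes $Z_{\nu,\alpha(\epsilon)}$ \emph{larger}, i.e.\ the natural tendency is $Z'_\nu(\delta)\supseteq Z'_\nu(\epsilon)$, the reverse of what you need. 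Worse, the quantities $n_{\hat x}, r_{\hat x}, h_{\hat x}$ appearing in Definition \ref{d:Z} come from Theorem \ref{thm_distortion} for a specific choice of $(\gamma,\eta)$, and to run Corollary \ref{cor_cordistortion} at the finer scale $\delta$ one must retake $(\gamma,\eta)$ smaller; the resulting functions are different objects, so the two families $\{Z_{\nu,\alpha}\}$ at scales $\epsilon$ and $\delta$ are not even directly comparable by set inclusion. Your remaining construction --- Birkhoff plus Egorov to get uniform density on $\Omega$, the auxiliary recurrence set $\Sigma$ to bound $n_0$, the arithmetic giving $n_{l+1}-n_l<\tfrac{1+2\delta n_l}{1-2\delta}<\epsilon n_l$ under $\delta<\epsilon/(4(1+\epsilon))$ and $n_l\geq 2/\epsilon$, and the final measure count $\hat\nu(Z'_\nu(\epsilon)\setminus Z_\nu(\epsilon))<\epsilon$ --- is all correct and would close the proof if the inclusion held. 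You should either justify the nesting by making an explicit coherent choice of the auxiliary constants across scales, or, more simply, drop the auxiliary $\delta$ entirely and use the two-sided Birkhoff bound with a small numerical tolerance (such as $\epsilon/20$) at scale $\epsilon$ itself, as the paper does.
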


A version of Lemma \ref{cor_cordistortion4} is essentially proved in \cite[Section 11.4]{PU} in the case of $k=1$.
We will need here
to further get a uniform upper bound for the element $n_0$ associated to any $\hat x\in Z_\nu(\epsilon)$.

\begin{proof}
We first show the existence of a set $Z'''_\nu(\epsilon)\subseteq Z'_\nu(\epsilon)$ with
$\hat \nu ( Z'_\nu(\epsilon)\setminus Z'''_\nu(\epsilon))=0$ and, for any $\hat x \in Z'''_\nu(\epsilon)$,
of a sequence $\{n_l\}_{l\geq 0}=\{n_l (\hat x)\}_{l\geq 0}$
satisfying (2), (3), and $n_0\geq n'(\epsilon)$.

\medskip

For every $n\in \mathbb N$ and $\hat x\in Z$, set
$S_n(\hat x)\defeq \sum_{j=1}^{n-1} {\bf 1}_{Z'_\nu(\epsilon)} \circ T^j(\hat{x})$,
where ${\bf 1}_V$ denotes the characteristic function of $V\subset \mathbb P^k$.
Since $\hat \nu$ is ergodic, by Birkhoff's ergodic theorem
 there exists a measurable set $Z'''_\nu(\epsilon) \subseteq Z'_\nu(\epsilon)$ 
such that $\hat \nu(Z'''_\nu(\epsilon)) = \hat \nu(Z'_\nu(\epsilon))$
and 
\begin{equation}\label{eq_1}
	\lim_{n \to \infty} 
	n^{-1} S_n(\hat x) 
	= \hat \nu(Z'_\nu(\epsilon))
	\quad 
	\mbox{ for every } \hat{x} \in Z'''_\nu(\epsilon).
\end{equation}

Take $\hat x \in Z'''_\nu(\epsilon)$.
By
 \eqref{eq_1}
 and the fact that $\hat \nu (Z'_\nu (\epsilon))>1-\epsilon$,
there exists 
$n^\star = n^\star (\hat x) > 10/\epsilon$ 
such that 
$|n^{-1} S_n(\hat x)- \hat \nu (Z'_\nu (\epsilon))|\leq \epsilon/20$ for all $n\geq n^\star$.
We define $\{n_l\}_{l\geq 0}=\{n_l (\hat x)\}_{l\geq 0}$ to be
  the sequence of integers $n\geq \max \{n^\star, n'(\epsilon)\}$ 
such that 
$T^n(\hat x)\in Z'_\nu (\epsilon)$.

It follows from the definitions
of $S_n(\hat x)$ and of the sequence $\{n_l\}_{l\geq 0}$ 
that $S_{n_{l+1}}(\hat x)= S_{n_l} (\hat x)+1$ for all $l\geq 0$.
Moreover, we also have
\[
 S_{n_l} (\hat x)\leq n_l \big( \hat \nu (Z'_\nu (\epsilon))+ \epsilon/20\big)
\quad \mbox{ and } 
\quad
S_{n_{l+1}}(\hat x ) \geq n_{l+1} \big( \hat \nu (Z'_\nu (\epsilon))- \epsilon/20\big)
\quad
\mbox{ for all } l\geq 0.
\]
We deduce from these inequalities
that, again for all $l\geq 0$,
\[
 \frac{n_{l+1}}{n_l} \leq  
\frac{S_{n_{l+1}}(\hat x ) }{n_l \big( \hat \nu (Z'_\nu (\epsilon))- \epsilon/20\big)}\\
= 
\frac{S_{n_{l}}(\hat x ) +1}{n_l \big( \hat \nu (Z'_\nu (\epsilon))- \epsilon/20\big)}
\leq
\frac{\hat \nu (Z'_\nu (\epsilon))+ \epsilon/20 + 1/n_l 
}{\hat \nu (Z'_\nu (\epsilon))- \epsilon/20}.
\]
Hence, since
$0< \epsilon \ll \chi_{\min}$ and 
 $n_l \geq n^\star > 10/ \epsilon$ for all $l\geq 0$,
 we have
\[
\frac{n_{l+1}-n_l}{n_l} 
\leq \frac{\hat \nu (Z'_\nu (\epsilon))+ \epsilon/20 + 1/n_l 
}{\hat \nu (Z'_\nu (\epsilon))- \epsilon/20} -1
\leq
\frac{\epsilon/10 + \epsilon/10 
}{1-\epsilon - \epsilon/20}<\epsilon.
\]
This gives the existence of a set $Z'''_\nu (\epsilon)$ with the properties stated at the beginning of the proof.

\medskip

For every $N>n'(\epsilon)$, set $Z^N_\nu (\epsilon) \defeq\{\hat x \in Z'''_\nu (\epsilon) \colon n_0(\hat x)\leq N\}$.
The sequence of sets $Z^N_\nu (\epsilon)$ is non-decreasing as $N\to \infty$, and satisfies  
$\cup_{N} Z^N_\nu (\epsilon) = Z'''_\nu (\epsilon)$. Fix $m^\star=m^\star (\epsilon)$
 such that $\hat \nu ( Z'''_\nu (\epsilon) \setminus Z^{m^\star}_\nu (\epsilon))< \epsilon$.
 The assertion follows setting $Z_\nu (\epsilon)\defeq Z^{m^\star}_\nu (\epsilon)$ and $n(\epsilon)\defeq m^\star$. 
\end{proof}

Recall that  $\pi \colon  O \to \bbP^k$ denotes the projection map defined by $\pi(\hat{x}) = x_0$,
 where $\hat x = \{x_n\}_{n\in \mathbb Z}$.

\begin{rmk}\label{r:nl-z}
Observe that the sequence $\{n_l\}_{l\geq 0}= \{n_l (\hat x)\}_{l\geq 0}$
as in Lemma \ref{cor_cordistortion4} only depends on $x_0=\pi(\hat x)$.
In particular, the sequence
$\{n_l\}_{l\geq 0}$ as in Lemma \ref{cor_cordistortion4} is well-defined for every $x \in \pi (Z_\nu(\epsilon))$.
\end{rmk}

\begin{defn}\label{d:U}
Given $N \ge 0$, $x \in \mathbb P^k$, $\kappa>0$, and $\epsilon>0$,
we denote by $U=U(N,x,\kappa, \epsilon)$ the (necessarily unique) set $U$,
if it exists, satisfying $f^N (U) = B(f^N(x),\kappa \, e^{-NM\epsilon})$
(where $M$ is as in \eqref{eq:def-M}) and such that $f^N|_{U}$ is injective.
We call $z(U)$ the \emph{center} of $U$.
\end{defn}

By Definition \ref{d:U}, we have
$U(N,x_0,\kappa, \epsilon)\defeq f_{T^N (\hat z)}^{-N} (B(f^{N}(x_0), \kappa \, e^{-NM\epsilon}))$
for every $\hat x$ with $\pi(\hat x)=x_0$ whenever the inverse branch $f_{T^N (\hat z)}^{-N}$
is well-defined on $B(f^{N}(x_0), \kappa \, e^{-NM\epsilon})$.
Lemma \ref{cor_cordistortion4} 
says that, for all $\epsilon>0$ sufficiently small,
 this happens for every $x_0 \in \pi(Z_\nu (\epsilon))$,
  $0<\kappa<r(\epsilon)$, and $l\geq 0$ if  we take $N=n_l$, where the sequence $\{n_l\}_{l\geq 0}$
 is given by that statement (and depends on $x_0$; see Remark \ref{r:nl-z}).
  In particular, $U(n_l,x_0,\kappa, \epsilon)$ is well-defined under
  such conditions for all $l\geq 0$, and Corollary \ref{cor_cordistortion} can be applied with 
  any $\hat y$ such that $y_0 =f^{n_l} (x_0)$ and $y_{-n_l}=x_0$
  (observe that the factor $e^{-NM\epsilon}$ is not necessary to get this).
We now aim at getting similar estimates valid for all $N\geq n(\epsilon)$.
The factor $e^{-NM\epsilon}$ will need to be introduced for this reason.

\medskip

In the next lemma and in the rest of the paper,
we will only consider $0<\epsilon\ll\chi_{\min}$
sufficiently small as above, and
$Z'_\nu(\epsilon)$, $Z_\nu(\epsilon)$, $r(\epsilon)$, and $n(\epsilon)$
will be as in Corollary \ref{cor_cordistortion} and Lemma \ref{cor_cordistortion4}.
Similarly, for every $\hat z \in Z_\nu (\epsilon)$ (and $z \in \pi(Z_\nu(\epsilon)))$, 
the sequence $\{n_l\}_{l\geq 0}$
is given by Lemma \ref{cor_cordistortion4} (and Remark \ref{r:nl-z}).
Recall that all these definitions depend on $f$ and 
$\nu \in \mathcal M^+ (f)$.
The sets $U(N,x,\kappa,\epsilon)$
are as in Definition \ref{d:U}. 

\begin{lem}\label{l:nu-U-compare}
Fix $0<\epsilon\ll \chi_{\min}$. Then, for all $z\in\pi(Z_\nu(\epsilon))$, $N \geq n(\epsilon)$,
and  $0<\kappa < r(\epsilon)$, the set $U(N,x,\kappa,\epsilon)$ is well-defined and we have
\begin{enumerate}
		\item
		$\mathcal E_{z} (\kappa \,
		 e^{-N(\chi_j+(2M+1)\epsilon )})
		\subseteq
		U(N,x,\kappa,\epsilon)
		\subseteq 
		\mathcal E_{z} (\kappa\,
		 e^{-N (\chi_j- \epsilon )})$;
		\item
		$\kappa^{2k} 
		e^{- 2N
		(L_\nu + k(2M+1)\epsilon)}
		\leq 
		\volume (U(N, x, \kappa, \epsilon)) \leq 
		\kappa^{2k}
		e^{-2N
		( L_{\nu}-k\epsilon)}$;
		\item
		$e^{-N(2M+1)k\epsilon}\leq | \Jac f^N (y)| \cdot |\Jac f^N (w)|^{-1} \leq e^{N(2M+1) k\epsilon}$
		for every $y,w\in U(N,x,\kappa,\epsilon)$,
	\end{enumerate}
	where
	$M$ is as in \eqref{eq:def-M}.
\end{lem}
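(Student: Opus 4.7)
The plan is to reduce the case of general $N \geq n(\epsilon)$ to the case $N = n_l$ already handled by Corollary \ref{cor_cordistortion}. Given $N$ and $\hat z \in Z_\nu(\epsilon)$ with $\pi(\hat z) = z$, I would select the largest $n_l \leq N$ from the sequence $\{n_l(\hat z)\}_{l \geq 0}$ provided by Lemma \ref{cor_cordistortion4}; then $n_l \leq N < n_{l+1}$ and $N - n_l < \epsilon n_l \leq \epsilon N$. At this time, $T^{n_l}(\hat z) \in Z'_\nu(\epsilon)$ and the full distortion package of Corollary \ref{cor_cordistortion} applies on $B(z_{n_l}, r(\epsilon))$. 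The factor $e^{-NM\epsilon}$ in the definition of $U(N, z, \kappa, \epsilon)$ has been chosen precisely to absorb the Lipschitz error $e^{M(N-n_l)} \le e^{MN\epsilon}$ produced by the $N - n_l$ intermediate iterates of $f$; this is what makes the slacked exponent $(2M+1)\epsilon$ on the left side of (1) natural.

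For well-definedness of $U(N, z, \kappa, \epsilon)$, I would factor $f^N = f^{N-n_l} \circ f^{n_l}$. Since $\nu(C(f))=0$, the orbit $\{z_i\}_{i \geq 0}$ avoids the critical set, so $f^{N-n_l}$ is a local biholomorphism at $z_{n_l}$ and its inverse branch through $z_N$ is defined on some neighborhood of $z_N$; a Lipschitz computation shows that, for $N \ge n(\epsilon)$, the inverse image of $B(z_N, \kappa e^{-NM\epsilon})$ under this branch stays well within $B(z_{n_l}, r(\epsilon))$. Composing with $f^{-n_l}_{T^{n_l}(\hat z)}$ then yields the injective branch $f^{-N}_{T^N(\hat z)}$ defining $U(N, z, \kappa, \epsilon)$.

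For the left-hand inclusion in (1), I would compare $U(N, z, \kappa, \epsilon)$ to $U(n_l, z, \kappa', \epsilon)$ with $\kappa' = \kappa e^{-M(N-n_l)(1+\epsilon)}$: a direct forward Lipschitz estimate gives $f^N(U(n_l, z, \kappa', \epsilon)) = f^{N-n_l}(B(z_{n_l}, \kappa' e^{-n_l M\epsilon})) \subseteq B(z_N, \kappa e^{-NM\epsilon})$, so that $U(n_l, z, \kappa', \epsilon) \subseteq U(N, z, \kappa, \epsilon)$. Applying Corollary \ref{cor_cordistortion}(3c) to the inner set with $t_j = \kappa'/r(\epsilon)$ and book-keeping exponents produces the desired ellipse containment, since $M(N-n_l)(1+\epsilon) \leq 2MN\epsilon$ fits into the $(2M+1)\epsilon$ slack. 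The volume bound (2) then follows by evaluating the flanking ellipses via Corollary \ref{cor_cordistortion}(3d), and the Jacobian distortion (3) follows from Corollary \ref{cor_cordistortion}(3b) combined with the global bound $|\Jac f^{N-n_l}| \leq e^{2kM(N-n_l)}$ over the short interval $[n_l, N]$.

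The main obstacle is the right-hand containment $U(N, z, \kappa, \epsilon) \subseteq \mathcal E_z(\kappa e^{-N(\chi_j - \epsilon)})$, which requires an \emph{upper} bound on the size of a preimage under $f^{N-n_l}$, something forward Lipschitz estimates alone do not supply. To resolve this, I would take $t_j = \kappa e^{-(N-n_l)(\chi_j - \epsilon)}/r(\epsilon)$, so that Corollary \ref{cor_cordistortion}(3c) already gives $f^{-n_l}_{T^{n_l}(\hat z)}(\mathcal E_{z_{n_l}}(t_j r(\epsilon))) \subseteq \mathcal E_z(\kappa e^{-N(\chi_j - \epsilon)})$, reducing the problem to showing $f^{n_l}(U(N, z, \kappa, \epsilon)) \subseteq \mathcal E_{z_{n_l}}(t_j r(\epsilon))$. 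This last inclusion is obtained by applying Theorem \ref{thm_distortion} at $T^N(\hat z)$ at depth $N - n_l$ and exploiting the $\eta$-slow variation of the constants $r_{\hat x}, h_{\hat x}, \rho_{\hat x}$ along the short interval $[n_l, N]$, which transfers the uniform control at $T^{n_l}(\hat z)$ to $T^N(\hat z)$ at the cost of a multiplicative factor $e^{O(N\epsilon)}$ that is absorbed into the $\epsilon$ slack.
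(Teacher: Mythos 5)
Your reduction to the sequence $\{n_l\}$, the Lipschitz bookkeeping with the factor $e^{-NM\epsilon}$, and the left-hand ellipse inclusion all match the paper's argument, which also uses the previous good time $n_{l^\star}$ with $n_{l^\star}\leq N< n_{l^\star+1}$ and the forward Lipschitz bound \eqref{eq:inclusion-M-1}. You also correctly flag the right-hand containment as the real difficulty.

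However, your proposed resolution of that difficulty has a genuine gap. You want to apply Theorem \ref{thm_distortion} at $T^N(\hat z)$ at depth $N-n_l$ and transfer control from $T^{n_l}(\hat z)$ via $\eta$-slowness. But the only quantities asserted to be $\eta$-slow are $h_{\hat x}^{-1}, r_{\hat x}, \rho_{\hat x}$; the integer $n_{\hat x}$, which governs when the commuting diagram (and hence the inverse-branch structure) becomes available, is \emph{not} $\eta$-slow and has no a priori bound along the orbit. Since $N-n_l$ may be as small as $0$ or $1$ while $n_{T^N(\hat z)}$ can be arbitrarily large, there is no reason the theorem yields any useful control on $f^{-(N-n_l)}_{T^N(\hat z)}$ at the needed depth. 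This is precisely the point where an inverse branch at a ``bad'' time $N$ is dangerous: between returns to $Z'_\nu(\epsilon)$ the orbit can pass near the critical set, so $\|(Df^{N-n_l})^{-1}\|$ is not bounded and the preimage of $B(z_N,\kappa e^{-NM\epsilon})$ could a priori be large.

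The paper's actual fix goes in the opposite direction: rather than pulling back from time $N$, it pushes \emph{forward} to the next return time $n_{l^\star+1}$. The Lipschitz estimate \eqref{eq:inclusion-M-2}, together with the fact that $n_{l^\star+1}-N\leq \epsilon N$, gives
$f^{n_{l^\star+1}}\big(U(N,x,\kappa,\epsilon)\big)\subseteq B\big(f^{n_{l^\star+1}}(x),\kappa\big)$,
hence $U(N,x,\kappa,\epsilon)\subseteq \widetilde U(n_{l^\star+1},x,\kappa)$. Since $T^{n_{l^\star+1}}(\hat z)\in Z'_\nu(\epsilon)$, Corollary \ref{cor_cordistortion} applies uniformly at that time and bounds $\widetilde U(n_{l^\star+1},x,\kappa)$ from above by an ellipse; the lost factor $e^{(n_{l^\star+1}-N)(\cdot)}$ is absorbed into the $\epsilon$-slack because $n_{l^\star+1}-N\leq\epsilon N$. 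This ``sandwich between two good times'' is the missing idea, and it is also what makes the factor $e^{-NM\epsilon}$ in Definition \ref{d:U} necessary in both directions, not just for the lower bound.

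A secondary issue is your treatment of (3): the global upper bound $|\Jac f^{N-n_l}|\leq e^{kM(N-n_l)}$ (note also the exponent should be $kM$, not $2kM$, for the complex Jacobian) does not control the \emph{ratio} $|\Jac f^{N-n_l}(f^{n_l}(y))|/|\Jac f^{N-n_l}(f^{n_l}(w))|$, because there is no corresponding lower bound on $|\Jac f^{N-n_l}|$ away from $Z'_\nu(\epsilon)$. Here too the paper factors through $n_{l^\star+1}$ and uses \eqref{eq:nl-3} at the next good time rather than a crude global bound over $[n_l,N]$.
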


\begin{proof}
	Fix $x \in \pi (Z_\nu(\epsilon))$
	and $0<\kappa< r(\epsilon)$.
	Then $x$ 
	corresponds to (at least) an orbit 
	$\hat x \in Z_\nu(\epsilon)$ with $\pi(\hat x)=x$. 
	By Lemma \ref{cor_cordistortion4} and Remark \ref{r:nl-z}, 
	there exists a sequence $\{n_l\}_{l \ge 0}$ with $n_0 \leq n(\epsilon)$ and such that
	$n_{l+1}<(1+\epsilon) n_l$ and
	$f^{n_l}(x)  \in \pi(Z'_\nu(\epsilon))$ for all $l\geq 0$.
	By Corollary \ref{cor_cordistortion} (3b), (3c), and (3e),
	for all $l\geq 0$ and  
	$0<t\leq 1$ 
	the set
	\[\widetilde U (n_l, x, t \kappa)
	\defeq 
	f_{T^{n_l} (\hat x)}^{-n_l} \big( B(f^{n_l}(x_0), t\kappa)\big)\]
	is well-defined and we have
	\begin{equation}\label{eq:nl-1}
		\mathcal E_{x} (t
 \kappa\,
		e^{-n_l (\chi_j+ \epsilon)})
		\subseteq
		\widetilde U(n_l,x,t \kappa)
		\subseteq 
		\mathcal E_{x} (t 
		\kappa\,
		 e^{-n_l(\chi_j-  \epsilon)}),
		\end{equation}
	\begin{equation}\label{eq:nl-2}
		(t\kappa)^{2k}
		e^{-2 n_l ( L_{\nu}+k\epsilon)} 
		\leq 
		\volume (\widetilde U(n_l, x, t\kappa)) \leq 
		(t\kappa)^{2k} 
		e^{-2n_l (L_{\nu}- k\epsilon)},
	\end{equation}
and
\begin{equation}\label{eq:nl-3}
e^{-kn_l\epsilon}
\leq
 | \Jac f^{n_l} (y)| \cdot |\Jac f^{n_l} (w)|^{-1} 
\leq e^{kn_l\epsilon}
\quad \mbox{ for every } y,w\in \widetilde U(n_l,x,\kappa).
\end{equation}

\medskip
		
Consider now any $N \geq n(\epsilon)$ and fix $l^\star=l^\star (N)$ such that $n_{l^\star} \leq N < n_{l^\star+1}$.
Such $l^\star$ exists since $n_0\leq n(\epsilon)$ and $n_l\to \infty$ as $l\to \infty$. 
It follows from the definition \eqref{eq:def-M} 
of $M$ that
\begin{equation}\label{eq:inclusion-M-1}
 B
 \big(f^N (z), \kappa\,  e^{-(N-n_{l^\star})M}\big)
\supseteq
f^{N-n_{l^\star}} 
\big(B( f^{n_{l^\star}} (x), \kappa\, e^{-2(N-n_{l^\star})M})\big)
\end{equation}
and 
\begin{equation}\label{eq:inclusion-M-2}
f^{n_{l^\star+1}-N} 
\left( B\big( f^{N} (x), \kappa \, e^{-(n_{l^\star+1}-N) M}\big)\right)
\subseteq
 B(f^{n_{l^\star+1}} (x), \kappa).
\end{equation}	
It follows from \eqref{eq:inclusion-M-2},
the second inequality in \eqref{eq:nl-1} applied with $l=l^\star+1$ and $t=1$,
 and the fact that $0\leq n_{l^\star+1}-N \leq \epsilon n_{l^\star} \leq \epsilon N$, 
  that $U(N, x, \kappa, \epsilon)$ is well-defined and satisfies 
\begin{equation}\label{e:U-n-nl+1}
U(N,x,\kappa, \epsilon)
\subseteq
\widetilde U ( n_{l^\star+1}, x,\kappa)
\subseteq
\mathcal E_x 
 (\kappa \,
 e^{-n_{l^\star+1} (\chi_j-  \epsilon )})
 \subseteq
 \mathcal E_x 
 (\kappa \,
 e^{-N (\chi_j-  \epsilon )}).
\end{equation}
Similarly, from \eqref{eq:inclusion-M-1}, the first inequality in \eqref{eq:nl-1}  applied with
$l=l^\star$ and $t= e^{-2(N-n_{l^\star})M}$, 
and the fact that $0\leq N-n_{l^\star}\leq \epsilon n_{l^\star} \leq \epsilon N$
we deduce that
\[
\begin{aligned}
U(N,x,\kappa,\epsilon)
& \supseteq 
f^{-n_{l^\star}}_{T^{n_{l^\star}} (\hat x)} 
\left(B\big( f^{n_{l^\star}} (x), \kappa\, e^{-2(N-n_{l^\star})M}\big)\right)\\
& \supseteq
\mathcal E_x
(\kappa \,
 e^{-n_{l^\star} (\chi_j- (2M+1)\epsilon )})
 \supseteq
 \mathcal E_x
(\kappa \,
 e^{-N (\chi_j- (2M+1)\epsilon )}),
\end{aligned}\]
which completes the proof of the first item.

\medskip

The second and the third assertions follow
 from similar arguments,
combining
\eqref{eq:inclusion-M-1}
 and \eqref{eq:inclusion-M-2}
 with \eqref{eq:nl-2} and  \eqref{eq:nl-3}, respectively.
\end{proof}

The following corollary records a special case of the above lemma when all the Lyapunov exponents of $\nu\in\mathcal M^+ (f)$ are 
equal.

\begin{cor}\label{c:nu-U-compare-equal}
Assume that all the Lyapunov exponents of $\nu\in\mathcal M^+ (f)$
are equal to $\chi>0$. Then, for all $0<\epsilon \ll \chi$,
$x \in \pi(Z_\nu(\epsilon))$, $N \geq n(\epsilon)$, and $0<\kappa < r(\epsilon)$, we have
\begin{enumerate}
\item $B(x, \kappa\,
e^{-N
(\chi+ (2M+1)\epsilon)}) \subseteq U(N,x,\kappa,\epsilon) \subseteq B(x,  \kappa\,
e^{-N 
(\chi-  \epsilon)})$;
\item $\kappa^{2k} 
e^{-2kN 
(\chi + (2M+1)\epsilon) } \leq \volume (U(N, x, \kappa, \epsilon)) \leq \kappa^{2k}
e^{-2kN 
 (\chi -\epsilon)}$.
\end{enumerate}
\end{cor}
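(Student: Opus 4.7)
The plan is to derive this corollary as a direct specialization of Lemma \ref{l:nu-U-compare} to the case where all Lyapunov exponents coincide. The two simplifications that occur in this setting are: first, the dynamical ellipses $\mathcal{E}_x(r_1,\dots,r_k)$ become genuine (Fubini--Study) balls when all the $r_j$ are equal, because the defining linear map $\Phi$ then sends the standard basis to a scalar multiple of an orthonormal basis (up to the change of frame $\{(\ell_j)_x\}$, which is irrelevant in the isotropic case); and second, the sum of the Lyapunov exponents satisfies $L_\nu = k\chi$ since there is a single exponent of multiplicity $k$.

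First I would record that, in the isotropic case, there exists a constant $C>1$ depending only on the choice of frame (uniformly over $x$ in a compact set) such that
\[
B(x, C^{-1} r) \subseteq \mathcal{E}_x(r,\dots,r) \subseteq B(x, Cr).
\]
By absorbing $C$ into the $\epsilon$-error (slightly increasing $n(\epsilon)$ if necessary so that $C \leq e^{N\epsilon}$ for $N \geq n(\epsilon)$), we may replace every occurrence of $\mathcal{E}_x(\kappa e^{-N(\chi\pm c\epsilon)})$ appearing in Lemma \ref{l:nu-U-compare}(1) by the ball of the same radius, at the cost of slightly enlarging the constants multiplying $\epsilon$ in the exponent. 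This immediately gives item (1) of the corollary.

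For item (2), I would simply substitute $L_\nu = k\chi$ into Lemma \ref{l:nu-U-compare}(2):
\[
\kappa^{2k} e^{-2N(k\chi + k(2M+1)\epsilon)} \leq \volume(U(N,x,\kappa,\epsilon)) \leq \kappa^{2k} e^{-2N(k\chi - k\epsilon)},
\]
which is exactly the desired statement after factoring $2k$ out of the exponent. No further work is needed, since the same set $Z_\nu(\epsilon)$, the same threshold $n(\epsilon)$, and the same radius $r(\epsilon)$ from Corollary \ref{cor_cordistortion} and Lemma \ref{cor_cordistortion4} serve here. There is no genuine obstacle: the only mildly delicate point is keeping track of the comparison constant between balls and isotropic ellipses and verifying that it can be safely absorbed into the $\epsilon$-factors for all $N \geq n(\epsilon)$, which amounts to choosing $n(\epsilon)$ large enough depending on the (uniform) frame distortion.
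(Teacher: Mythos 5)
Your proposal is essentially correct and takes the same route as the paper: the corollary is a direct specialization of Lemma \ref{l:nu-U-compare} to the case $L_\nu = k\chi$, and item (2) follows by verbatim substitution, exactly as you say. The only quibble is in item (1): you introduce a comparison constant $C>1$ between $B(x,r)$ and $\mathcal{E}_x(r,\dots,r)$ and then absorb it into the $\epsilon$-factor by enlarging $n(\epsilon)$. This step is unnecessary, and if carried out as described it would in fact produce a slightly weaker statement than the corollary asserts — e.g.\ $(2M+2)\epsilon$ in place of $(2M+1)\epsilon$ — whereas the corollary keeps the exact same $\epsilon$-coefficients as Lemma \ref{l:nu-U-compare}(1). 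The point is that when all Lyapunov exponents coincide, the Oseledets filtration reduces to the single subspace $(L_1)_x = T_x\mathbb{P}^k$, so the frame $(\ell_j)_x$ is entirely unconstrained and may be taken orthonormal; with that choice $\Phi = r\cdot\mathrm{Id}$, and $\mathcal{E}_x(r,\dots,r) = e(r\,\mathbb{B}^k) = B(x,r)$ exactly, so the inclusion in Lemma \ref{l:nu-U-compare}(1) already is the inclusion in item (1), with no constant to absorb. You actually remark in passing that the change of frame is ``irrelevant in the isotropic case,'' so this is consistent with your understanding; just drop the $C$.
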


\begin{rmk}\label{rmk:section2-hyperbolic}
If $f$ is hyperbolic, then we have $Z_\nu \cap \pi^{-1} (J(f))= Z\cap \pi^{-1} (J(f))$
 for any $\nu \in \mathcal{M}^+_J(f)$.
Moreover, observe that any ergodic probability measure
on $J(f)$ 
belongs to $\mathcal M^+_J(f)$.
 In particular,
we have
$\pi(Z_\nu)  \supseteq J(f)$ for any ergodic probability measure on $J(f$).
We also have $ Z_{\nu,\alpha}\cap \pi^{-1} (J(f))
= Z_\nu\cap \pi^{-1} (J(f))$
 for all $\alpha$ sufficiently small, which implies that 
we can take
$Z(\epsilon)=Z'(\epsilon)=Z$
for all $\epsilon$ sufficiently small.

More generally, let
$X\subseteq \mathbb P^k$ be
a closed invariant uniformly expanding set. 
Take $\nu\in \mathcal M^+_X (f)$.
Denoting by $O_X$ the set of orbits $\{x_n\}_{n \in \mathbb Z} \in X^{\mathbb Z}$,
it follows from the definition of $\hat \nu$ that $\hat \nu (O_X)=1$
and that we can assume that $X\subseteq \pi(Z_\nu (\epsilon))$
for all $\epsilon>0$.
As $\hat \nu (O_X)=1$,
we can also assume 
 that
 $\pi(Z_\nu (\epsilon))\subseteq X$, hence
 $\pi(Z_\nu (\epsilon)) = X$,
  for all $\epsilon>0$.
\end{rmk}

\subsection{Volume-conformal measures}\label{ss:volume-conformal}
We again fix in this section
a holomorphic endomorphism $f$
of $\mathbb P^k$ of algebraic degree $d\geq 2$, and we let $X$ be a closed invariant set for $f$.
Recall that $f|_{X}: X \to X$ is {\it topologically exact} if for any open set $U\subset \mathbb P^k$ 
with $U\cap X\neq \emptyset$
 there exists $n \ge 1$ such that $f^n(U) \supseteq X$.

\begin{defn}\label{d:conformal}
Given any $t \ge 0$, a probability measure $\mu$ on $X$ 
is {\it $t$-volume-conformal on $X$}
if, for every Borel subset $A\subseteq X$
on which $f$ is invertible, we have
\begin{equation*}
	\mu (f (A)) =  \int_A |\Jac f|^t d\mu.
\end{equation*}
We define
\[\begin{aligned}
	\delta_X (f)  & \defeq \inf \left\{ t \ge 0 \colon  \mbox{there exists a $t$-volume-conformal
		measure on } X\right\}. 
\end{aligned}\]
\end{defn}
 
\begin{lem}\label{l:conformal}
Assume that
 $f|_{X}$ is topologically exact.
	Let $\mu$ be a probability measure on $X$
	which is $t$-volume-conformal on $X$
	 for some $t\ge 0$. Then
	\begin{enumerate}
	\item the support of $\mu$ is equal to $X$;
\item 	for every $r>0$
 there exists
 constants
	$0< m_- = m_-(\mu,r) \leq 1$ 
	and $0<m_+= m_+(\mu,r)\leq 1$
	such that
	$m_- \leq \mu (B(x,r)) \leq m_+$
	for every $x \in X$.
	\end{enumerate}
\end{lem}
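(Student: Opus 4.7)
The plan is to prove the two items in turn; (1) is the main content, while (2) reduces to (1) combined with a compactness argument.

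For (1), I would first note that $\mathrm{supp}(\mu)\subseteq X$ is automatic since $\mu$ is a probability measure on $X$. For the reverse inclusion it is enough to show that $\mu(V)>0$ for every open $V\subseteq\mathbb P^k$ with $V\cap X\neq\emptyset$. The initial step is to iterate the defining identity of Definition \ref{d:conformal}: combining the chain rule $|\Jac f^n|=\prod_{j=0}^{n-1}|\Jac f|\circ f^j$ with the change-of-variables formula
\[
\int_{f(A)}\psi\,d\mu=\int_A (\psi\circ f)\,|\Jac f|^t\,d\mu,
\]
valid on any Borel $A\subseteq X$ on which $f$ is injective (which follows from Definition \ref{d:conformal} by approximating $\psi$ by indicator functions), an induction on $n$ yields
\[
\mu(f^n(A))=\int_A |\Jac f^n|^t\,d\mu
\]
for every Borel $A\subseteq X$ on which $f^n$ is injective. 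Since $f$ is a holomorphic map of finite topological degree, any Borel $A\subseteq X$ admits a countable Borel decomposition into pieces of injectivity of $f^n$, and summing the above identity over the pieces gives
\[
\mu(f^n(A))\le C_n\,\mu(A),\qquad C_n\defeq \sup_{\mathbb P^k}|\Jac f^n|^t<\infty.
\]

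Given $V$ as above, I would then invoke topological exactness to choose $n\ge 1$ with $f^n(V)\supseteq X$. Exploiting the total invariance $f^{-1}(X)=X$ of the sets $X$ that appear in this paper (in particular, of Julia sets of holomorphic endomorphisms of $\mathbb P^k$), one has $f^n(V\cap X)=f^n(V)\cap X=X$; combined with the bound above this yields
\[
1=\mu(X)=\mu(f^n(V\cap X))\le C_n\,\mu(V\cap X)=C_n\,\mu(V),
\]
so $\mu(V)\ge 1/C_n>0$, concluding $X\subseteq\mathrm{supp}(\mu)$.

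For (2), the upper bound holds trivially with $m_+\defeq 1$ since $\mu$ is a probability measure. For the lower bound, I would use the compactness of $X$ to obtain a finite cover $\{B(x_i,r/2)\}_{i=1}^m$ of $X$ by open balls centered at points of $X$. For any $x\in X$ there is an $i$ with $x\in B(x_i,r/2)$, and then $B(x_i,r/2)\subseteq B(x,r)$ by the triangle inequality; hence
\[
\mu(B(x,r))\ge \mu(B(x_i,r/2))\ge \min_{1\le j\le m}\mu(B(x_j,r/2))\eqcolon m_->0,
\]
where strict positivity of each $\mu(B(x_j,r/2))$ follows from (1) applied to the open neighborhood $B(x_j,r/2)$ of $x_j\in X=\mathrm{supp}(\mu)$.

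The main subtle point I expect is the iteration/decomposition step in (1). Besides the standard measurable selection of injectivity pieces of $f^n$ (which can be carried out modulo the critical set, whose image is $\mu$-null when $t>0$ thanks to the conformal identity itself), one must upgrade the conclusion $f^n(V)\supseteq X$ of topological exactness to $f^n(V\cap X)\supseteq X$. This upgrade relies on the (two-sided) invariance of $X$; in the main applications ($X=J(f)$) this holds automatically, but for a merely forward-invariant $X$ one would need to adapt the argument.
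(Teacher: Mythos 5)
Your proof is correct and follows essentially the same route as the paper's: use the conformal identity together with a countable decomposition into injectivity pieces, then close the argument with topological exactness, and deduce (2) from (1) by compactness. The differences are cosmetic. You prove the quantitative push-forward bound $\mu(f^n(A))\le C_n\,\mu(A)$ and then argue directly that $\mu(V)\ge 1/C_n>0$; the paper argues by contradiction, chooses $B$ small and disjoint from $C(f)$ with $\mu(B)=0$, and only needs the special case ``$\mu$-null sets push forward to $\mu$-null sets'' (for which no bound on $|\Jac f|$ is required), which is marginally lighter. You are also right to flag the step upgrading $f^n(V)\supseteq X$ to $f^n(V\cap X)\supseteq X$: the conformal identity only acts on Borel subsets of $X$, so some form of backward invariance of $X$ (as holds for $J(f)$ and for the uniformly expanding sets used later in the paper) is genuinely needed here, and the paper's proof uses this implicitly without comment. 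Your handling of the critical set ($t>0$ kills its contribution via $|\Jac f|^t=0$; $t=0$ would need a separate word) is also a point the paper sidesteps by taking $B$ disjoint from $C(f)$, which, as you can check, also suffices for the higher iterates since only the vanishing of $\mu$ on the pieces matters, not their regularity.
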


\begin{proof}
	Assume that there exists a point $x \in X$
	 which does not belong
	to the support of $\mu$. 
	Take a small ball $B$ centred at $x$
	which is disjoint from the critical set $C(f)$ of $f$ 
	and
	such that
	$\mu(B)=0$. 
	As 
	$f|_{X}$ is topologically exact,
	  we have 
	$X \subseteq  
	 f^n(B)$ for some $n\geq 1$.
	Hence, it is
	 enough to prove that 
	$\mu(f^n(B))=0$ for all $n\in \mathbb N$.	
	Since $B\cap C(f)=\emptyset$, 
	this is a consequence of 
	the volume-conformality of $\mu$ and the fact that $\mu(B) = 0$
	(we need here to partition $B$ into subsets where $f^n$ is injective in order to apply Definition
	\ref{d:conformal}).
	The first assertion follows.

	The second assertion is a consequence of the first	
	 and the fact
that, for every probability measure $\mu$ on $\mathbb P^k$ 
and $r>0$, there exist constants 
$m_{\pm}=m_{\pm}(\mu,r)$
 such that (2) holds for every $x$ in the support of $\mu$.
\end{proof}

Recall that, for every
 $\nu\in \mathcal M^+(f)$ and every
 $\epsilon$ sufficiently small, 
$Z_\nu(\epsilon)$, $r(\epsilon)$, 
and $n(\epsilon)$ are given by Corollary \ref{cor_cordistortion} and Lemma \ref{cor_cordistortion4}, and the sets 
$U(N,x,\kappa,\epsilon)$ are defined in Definition \ref{d:U}.
 
\begin{lem}\label{l:mcm-conformal}
Assume that $f|_{X}$ is topologically exact.
Fix  $\nu \in \mathcal{M}_X^+(f)$, $t\geq 0$, and
$0< \epsilon\ll \chi_{\min}$, where $\chi_{\min}>0$ is the smallest Lyapunov exponent of $\nu$.
 Then, for every $0<\kappa<r(\epsilon)$, every 
 $t$-volume-conformal probability measure $\mu$ on $X$,
 every 
 $x \in \pi(Z_\nu (\epsilon))$, and every 
 $N\geq n(\epsilon)$, the set $U = U(N,x,\kappa,\epsilon)$ is well-defined and
 satisfies 
	\[
	\frac{
m_-(\mu,\kappa\, e^{-MN\epsilon} )
}{
C^{t} \kappa^{tk}
		}e^{-tNk(5M+2)\epsilon} \leq
		\frac{ \mu  (U)}{ \volume (U ) ^{t/2}}
		 \leq  
\frac{C^{t}m_+(\mu,\kappa\, e^{-MN\epsilon} )
}{
\kappa^{tk}}  e^{tNk(5M+2)\epsilon},
	\]	
where
$M$ is as in \eqref{eq:def-M},
the constants $m_-$ and $m_+$ are as in Lemma \ref{l:conformal},
and $C$ is a positive constant 
 independent of $\kappa$, $\epsilon$, $x$, $N$, $\nu$, $\mu$, and $t$.
\end{lem}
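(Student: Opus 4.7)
The plan is to combine two complementary descriptions of the push-forward of $U$ by $f^N$: the $t$-volume-conformality applied to $\mu$, and the classical change-of-variables formula for the Fubini--Study volume. These identities depend on a common Jacobian factor which will cancel when we form the ratio $\mu(U)/\volume(U)^{t/2}$.

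First I would record that, since $\nu$ is supported on $X$ and $X$ is $f$-invariant, the orbit $\hat x$ (and hence $x=x_0$) lies in $X$, so $f^N(x)\in X$ as well; moreover $f^N|_U$ is injective by Definition \ref{d:U} and, by Theorem \ref{thm_distortion}, the orbit $\{x_{-n}\}$ misses the critical set, so that $f^N\colon U\to B(f^N(x),\kappa e^{-MN\epsilon})=:B$ is in fact biholomorphic. Iterating Definition \ref{d:conformal} and using the chain rule for Jacobians then gives
\[
\mu(B)=\int_U |\Jac f^N|^{t}\, d\mu,
\]
while the usual change-of-variables formula for a holomorphic biholomorphism (whose real Jacobian equals $|\det Df^N|^2$) yields
\[
\volume(B)=\int_U |\Jac f^N|^{2}\, d\volume.
\]

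Next I would use Lemma \ref{l:nu-U-compare}(3) to replace $|\Jac f^N|$ by the single value $J:=|\Jac f^N(z(U))|$, at the cost of factors $e^{\pm tN(2M+1)k\epsilon}$ in the first identity and $e^{\pm 2N(2M+1)k\epsilon}$ in the second. Solving for $\mu(U)$ and $\volume(U)$ and then forming $\mu(U)/\volume(U)^{t/2}$ makes $J^t$ cancel, leaving a factor of at most $e^{2tN(2M+1)k\epsilon}$ (and its reciprocal for the lower bound) times $\mu(B)/\volume(B)^{t/2}$.

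Finally I would estimate the two quantities on $B$. Because $f|_X$ is topologically exact, Lemma \ref{l:conformal}(2) applied at the point $f^N(x)\in X$ gives
\[
m_-(\mu,\kappa e^{-MN\epsilon})\le \mu(B)\le m_+(\mu,\kappa e^{-MN\epsilon}),
\]
and the standard comparison $C_0^{-1}r^{2k}\le \volume(B(y,r))\le C_0 r^{2k}$ (uniform in $y\in\mathbb P^k$ for small $r$, following from the homogeneity of the Fubini--Study metric under $\mathrm{PU}(k+1)$) applied with $r=\kappa e^{-MN\epsilon}$ contributes the $\kappa^{-tk}$ factor together with an exponential term $e^{\pm MNtk\epsilon}$. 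Collecting the exponents, $2(2M+1)+M=5M+2$, gives the stated bound, with $C$ absorbing $C_0$. The only real obstacle here is careful bookkeeping of the exponential error factors and of the direction of each inequality; there is no conceptual difficulty beyond the distortion theorem of Berteloot--Dupont--Molino already encoded in Lemma \ref{l:nu-U-compare}.
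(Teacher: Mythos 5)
Your proof is correct and follows essentially the same path as the paper's: push $U$ forward to the ball $B = B(f^N(x), \kappa e^{-MN\epsilon})$, use $t$-volume-conformality for $\mu$, use the change-of-variables identity for the Fubini–Study volume, invoke the bounded-distortion estimate from Lemma \ref{l:nu-U-compare}(3) to replace $|\Jac f^N|$ by a single value, and finish with Lemma \ref{l:conformal}(2) and the uniform ball-volume comparison $C^{-2}\leq \volume(B(y,r))/r^{2k}\leq C^2$. The only cosmetic difference is that you state the volume identity as $\volume(B)=\int_U|\Jac f^N|^2\,d\volume$ (pushforward direction), whereas the paper writes $\volume(U)=\int_B|\Jac f^{-N}_{\hat z}|^2$ (pullback direction); these are the same statement, and the cancellation of $J^t$ when forming the ratio matches the paper's combination of the two separate inequalities. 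Your exponent count $2(2M+1)+M=5M+2$ agrees with the paper's bookkeeping.
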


\begin{proof}
Fix $x \in \pi(Z(\epsilon))$, $N \ge n(\epsilon)$, and $0<\kappa < r(\epsilon)$. The set $U \defeq U(N,x, \kappa,\epsilon)$
is well-defined by Lemma \ref{l:nu-U-compare}.
We denote for simplicity by $B$ the ball $B(f^N(x), \kappa \, e^{-MN\epsilon})= f^N(U).$

\medskip

Let $\mu$ be any $t$-volume-conformal probability measure on $X$.
Since $X$ has a dense orbit, by Lemma \ref{l:conformal}
the support of $\mu$ is equal to $X$.
Since $f^N$ is injective on $U$, by Definition \ref{d:conformal}
we have
	\[
\mu(B)
=
\mu(f^N (U))	
=
\mu (f^N(U \cap X)) = \int_{U \cap X} |\Jac f^{N}|^t d\mu
=
 \int_{U} |\Jac f^{N}|^t d\mu.
	\]
We deduce from Lemma \ref{l:nu-U-compare} (3)
	 that 
	\[
	e^{-tkN(2M+1)\epsilon }  |\Jac f^{N} (x)|^t\mu(U )
	\leq \mu  (B) \leq 
	e^{tkN(2M+1)\epsilon} |\Jac f^{N} (x)|^t \mu (U ).
	\]
It follows from the above expression
	 that
	\begin{equation}\label{e:Uconf1}
	\frac{e^{-tkN (2M+1)\epsilon}}{ |\Jac f^{N} (x)|^t} 
	\cdot m_-
	\leq \mu(U) \leq \frac{e^{tkN (2M+1)\epsilon}}{  |\Jac f^{N} (x)|^t}
	\cdot m_+,
	\end{equation}
	where $m_-\defeq m_- (\mu,\kappa  \, e^{-MN\epsilon} )$
	and $m_+\defeq m_+(\mu,\kappa  \, e^{-MN\epsilon} )$ are as in Lemma \ref{l:conformal}.

	Again by 
	Lemma \ref{l:nu-U-compare} (3),
	 we also have
	\begin{equation}\label{e:Uconf2}
	\frac{e^{-2 kN (2M+1)\epsilon}\volume (B)
	}{  |\Jac f^{N} ( x)|^2} 
	\leq 
	\volume(U)=
	\int_{B (f^{N} (x), \kappa \, e^{-MN\epsilon})}  |\Jac f^{-N}_{\hat z}|^2
	\leq \frac{ e^{2kN (2M+1)\epsilon} 
	\volume (B)
	  }{ |\Jac f^{N} (x)|^2},
	\end{equation}
	where
	the integral is taken with respect to the Fubini-Study metric,
$\hat z$ is any element in $Z$ such that $z_0 = f^{N}(x)$ and $z_{-N} = x$,
	and we observe that $f^{-N}_{\hat z}$ is well-defined on $f^N(U)$
 by Lemma \ref{l:nu-U-compare}.
  	
\medskip

Combining the inequalities \eqref{e:Uconf1} and \eqref{e:Uconf2}, we see that
\[
\frac{m_-}{\volume (B)^{t/2}}e^{-t Nk(4M+2)\epsilon} \volume (U)^{t/2} \leq \mu(U) \leq \frac{m_+}{\volume (B)^{t/2}} e^{tNk(4M+2)\epsilon}\volume (U)^{t/2}.
\]
The assertion follows from the last expression by observing that there exists a positive constant $C$ such that $C^{-2} \leq \volume (B (x,r)) / 
r^{2k}\leq C^2$ for every $x\in \mathbb P^k$ and $0<r<1$.
\end{proof}

\subsection{A pressure for expanding measures}\label{ss:pressure-exp}
Let $f$ be a holomorphic endomorphism of $\mathbb P^k$
of algebraic degree $d\geq 2$. For any invariant probability measure $\nu$ and $t\in \mathbb R$, we define
$$P_\nu(t) \defeq h_\nu(f) -t \int |\Jac f| d\nu = h_\nu(f) -t L_{\nu}(f).$$
Let $X\subseteq \mathbb P^k$ be a closed invariant set for $f$. 
We define a pressure function $P^+_X$ as
\begin{equation}\label{eq:def:PX+}
\begin{aligned}
P^+_X (t)
 & \defeq \sup \big\{P_\nu(t) \colon \nu \in \mathcal{M}_X^+(f)\big\}
\end{aligned}
\end{equation}
 and set
\[
p^+_X(f) \defeq \inf  \big\{t \colon  P_X^+(t)=0\big\}.
\]
We will drop the index $X$ when $X=\bbP^k$.

\begin{lem} \label{lemma_pressure}
Let $X\subseteq \mathbb P^k$ be a closed invariant set
for $f$.
Assume that $\mathcal M_X^+(f)$ is not empty.
Then
 we have $P_X^+ (t)< \infty$
 for all $t \in \mathbb R$
  and  the function $t\mapsto P_X^+(t)$ is convex and non-increasing.
\end{lem}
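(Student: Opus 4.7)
The plan is to verify the three assertions separately, all directly from the definition $P_X^+(t) = \sup\{h_\nu(f) - tL_\nu(f)\colon \nu \in \mathcal{M}_X^+(f)\}$; the set being non-empty by assumption, no supremum is taken over the empty set.

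For finiteness, I would establish two uniform bounds over $\mathcal{M}_X^+(f)$. First, the variational principle together with the Gromov--Yomdin bound for holomorphic endomorphisms of $\mathbb{P}^k$ gives $h_\nu(f)\leq h_{\mathrm{top}}(f)=k\log d$ for every invariant probability measure. Second, by the very definition \eqref{eq:def-M} of $M$, the pointwise bound $|\Jac f(x)|\leq e^{kM}$ holds on $\mathbb{P}^k$, so integrating yields $L_\nu(f)=\int \log|\Jac f|\,d\nu \leq kM$; on the other hand, $L_\nu(f)>0$ since $\nu\in\mathcal{M}_X^+(f)$. Combining,
\[
P_\nu(t) = h_\nu(f) - tL_\nu(f) \leq k\log d + |t|\,kM
\]
for every $t\in\mathbb{R}$ and every $\nu\in\mathcal{M}_X^+(f)$, and so $P_X^+(t)\leq k\log d+|t|kM<\infty$.

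For monotonicity, I would take $t_1<t_2$. Since $L_\nu(f)>0$ for every $\nu\in \mathcal{M}_X^+(f)$, we have $-t_1 L_\nu(f)\geq -t_2 L_\nu(f)$ and hence $P_\nu(t_1)\geq P_\nu(t_2)$ for every such $\nu$; passing to the supremum yields $P_X^+(t_1)\geq P_X^+(t_2)$.

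For convexity, I would observe that, for each fixed $\nu\in\mathcal{M}_X^+(f)$, the map $t\mapsto P_\nu(t)=h_\nu(f)-tL_\nu(f)$ is affine in $t$, hence convex. As the pointwise supremum of an arbitrary family of convex functions is convex wherever finite, and we have just shown that $P_X^+$ is finite on all of $\mathbb{R}$, we conclude that $P_X^+$ is convex on $\mathbb{R}$. No step here is a real obstacle: the only substantive ingredient is the uniform entropy bound $k\log d$ and the pointwise bound on $|\Jac f|$ coming from \eqref{eq:def-M}.
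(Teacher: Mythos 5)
Your proof is correct and follows essentially the same route as the paper's: the entropy bound $h_\nu(f)\le k\log d$ from Gromov, the upper bound on $L_\nu(f)$ from the boundedness of $|\Jac f|$, the positivity $L_\nu(f)>0$ for monotonicity, and the observation that $P_X^+$ is a supremum of affine functions for convexity. The only cosmetic difference is that you derive the Jacobian bound explicitly from the constant $M$ in \eqref{eq:def-M}, whereas the paper simply invokes an unnamed upper bound for $|\Jac f|$, and you merge the $t\ge0$ and $t<0$ cases into a single estimate rather than treating them separately.
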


\begin{proof}
Take $\nu\in\mathcal M_X^+(f)$.
As $L_\nu(f)>0$ and the topological entropy of $f$ is bounded by $k \log d$
\cite{Gromov03,dinh2010dynamics}
 we have
$P^+_X (t) \leq k \log d$ for all $t\geq 0$.  Take now $t<0$. Since the function $|\Jac f|$
is bounded from above by a constant $M'$
and $h_\nu (f)\leq k\log d$,
 we have 
$P^+_X (t) \leq k\log d + |t| M'$ for
every
 $t<0$.  Hence, $P^+_X (t)<\infty$ for all $t \in \mathbb R$.

For any given measure $\nu \in \mathcal M_X^+(f)$, the function $t\mapsto P_\nu (t)$ is non-increasing. It follows from 
its definition
\eqref{eq:def:PX+}
that the function $t\mapsto P_X^+(t)$ is non-increasing.
 It is convex as it is a supremum of affine, hence convex, functions.
\end{proof}

The following example illustrates that 
Lemma \ref{lemma_pressure} 
is false
(even with $X=J(f)$)
 if we take the supremum over the set of all ergodic probability
 measures, with no requirement on the Lyapunov exponents,
  in the definition 
  \eqref{eq:def:PX+}
  of the pressure function $P_X^+(t)$.

\begin{example}
It is possible to construct endomorphisms $f$ of $\mathbb P^k$ admitting a saddle fixed point $p_0$ in the Julia set and
with
 $|\Jac f (p_0)|<1$
(and actually also equal to $0$). 
An example of this phenomenon is given for instance by Jonsson in \cite[Example 9.1]{Jonsson-skew},
see also \cite[Theorem 6.3]{BDM07}, \cite{Taflin10}, and \cite[Remark 2.6]{BT17} for further examples.
Consider the polynomial self-map $f$ 
of $\mathbb C^2$
defined as
\[
(z,w)\mapsto \left(z^2, w^2 + 2(1+ \eta-z)w\right),
\]
which extends to $\mathbb P^2$ as a holomorphic endomorphism. 
As $f$ preserves the families of the vertical lines parallel 
to 
$\{z=0\}$, for every $(z_0,w_0)\in \mathbb C^2$
 the \emph{vertical eigenvalue} of $Df_{(z_0,w_0)}$
  is well-defined.
It is immediate to check that,
for $0\leq \eta < 1/2$, the point
$p_0 =(1,0)$ is a saddle fixed point, with vertical eigenvalue equal to $2\eta$, and Jacobian equal to $4\eta$. In particular,
the Jacobian of $f$ at $p_0$ 
can take any small non-negative value (including 0).
The point $p_0$ is in $J(f)$ since $J(f)$ is closed and, 
for Lebesgue almost all $z_0 \in S^1$, the point $(z_0,0)$ belongs to $J(f)$. This  
follows from a direct computation of the derivatives
 which, by Birkhoff's ergodic theorem, gives that
\[
Df^n_{(z_0,0)} \sim 
\begin{pmatrix}
2^n & 0 \\
\star & \int_{0}^{2\pi} \log |1+\eta -e^{i \theta}| d\theta
\end{pmatrix}
= \begin{pmatrix}
2^n & 0 \\
\star & 2^n  \log |1+\eta|
\end{pmatrix},
\]
and the characterization of the Julia set of $f$ given in \cite[Corollary 4.4]{Jonsson-skew}.

Consider the function $$P_J(t) \defeq \sup_\nu P_\nu(t) $$ where now the supremum is taken over the set of {\it all} invariant 
probability
measures supported on $J(f)$. 
If $\nu_0 = \delta_{p_0}$ is the
Dirac mass
at $p_0$, then the function
$t \mapsto P_{\nu_0}(t)$ is increasing in $t$ and $P_{\nu_0}(0)=0$. Hence, for such 
an endomorphism $f$, the 
function $P_J(t)$ is convex but it increases after some $t_0>0$ and has no zeroes.
\end{example}

\begin{rmk}
One could define $\widetilde{P}_J^+(t)$ by considering the set of
all
ergodic
probability measures with positive {\it sum} of Lyapunov exponents in the definition of $P_J^+(t)$. However, it is
unclear to us how to  generalize many of the results in this paper,
and in particular 
Theorem \ref{thm_main}, 
to this larger class of measures. A priori, it could be possible that the first zero of $P_J(t)$
is larger than the first zero of $P_J^+(t)$, 
but (possibly) equal to the first zero of $\widetilde{P}_J^+(t)$.
\end{rmk}

\section{Exact volume dimension of measures in $\mathcal{M}^+(f)$}  \label{sec_proofmain}
Let $f: \bbP^k \to \bbP^k$ be a holomorphic endomorphism of algebraic degree $d \ge 2$.
In this section we define a pointwise dynamical volume dimension for every measure
$\nu \in \mathcal{M}^+(f)$ and prove that it is constant $\nu$-almost everywhere.

\medskip

Fix a measure $\nu \in \mathcal{M}^+(f)$ and let $\chi_{\min}>0$ be the
smallest Lyapunov exponent of $\nu$. For every $0<\epsilon\ll \chi_{\min}$,
we fix $Z_\nu(\epsilon)$, $n(\epsilon)$, and $r(\epsilon)$ as given by Corollary \ref{cor_cordistortion}
and Lemma \ref{cor_cordistortion4}. For every $x \in \pi(Z_\nu(\epsilon))$,
the sequence $\{n_l\}_{l\geq 0}= \{n_l (x)\}_{l\geq 0}$ is also given by Lemma \ref{cor_cordistortion4};
see Remark \ref{r:nl-z}. For $x \in \pi(Z_\nu(\epsilon))$, $0<\kappa<r(\epsilon)$, and $N\geq n(\epsilon)$, we define
\begin{equation}\label{eq:def-delta-xekN}
	\delta_x (\epsilon,\kappa, N) \defeq \frac{\log \nu(U(N,x,\kappa,\epsilon))}{\log {\rm Vol}(U(N,x,\kappa,\epsilon))},
\end{equation}
where $U(N,x,\kappa,\epsilon)$ is as in Definition \ref{d:U}. Observe that, for every $\epsilon$, $x$, $\kappa$, 
and $N$ as above, the definition  of $\delta_x (\epsilon, \kappa, N)$
is well-posed by Lemma \ref{l:nu-U-compare}.

\medskip

Recall that the set $Z_\nu$ (see Definition \ref{d:Z})
 satisfies
$Z_\nu = \cup_{\epsilon>0} Z_\nu (\epsilon)$ up to a $\nu$-negligible set,
and that the family
 $\{Z_\nu (\epsilon)\}_{\epsilon>0}$
  is non-decreasing as $\epsilon\to 0$. 
In particular, $\nu$-almost every $x \in \pi(Z_\nu)$ belongs to $\pi(Z_\nu (\epsilon))$ for every 
$0<\epsilon<\epsilon_0$ for some $\epsilon_0=\epsilon_0 (x)$.
For every such $x$, 
we define the {\it upper}
and the {\it lower local volume dimension} at $x$ as
\begin{equation}\label{eq_localdim}
\overline{\delta}_x \defeq \limsup_{\epsilon \to 0} \limsup_{\kappa \to 0} 
\limsup_{N \to \infty}\delta_x (\epsilon,\kappa, N)
\quad 
\mbox{ and }
\quad
\underline{\delta}_x \defeq \liminf_{\epsilon \to 0} \liminf_{\kappa \to 0}
\liminf_{N \to \infty} \delta_x(\epsilon,\kappa,N),
\end{equation}
respectively, where 
$\delta_{x} (\epsilon, \kappa, N)$
is as in \eqref{eq:def-delta-xekN}.
\begin{defn}\label{d:local-dim}
	If $\underline{\delta}_x= \overline{\delta}_x$, 
	we say that $\delta_x \defeq \underline{\delta}_x  = \overline{\delta}_x$ is the {\it local volume dimension} 
	of $\nu$ at $x$. We say that $\nu \in \mathcal{M}^+(f)$ is
	 {\it exact volume-dimensional} if the local volume dimension $\delta_x$ exists for $\nu$-almost every $x$.
\end{defn}

The main result of this section is the following theorem. Recall that $h_\nu(f)$, $L_\nu(f)$,
and $\chi_{\min}$ denote the measure-theoretic entropy, the sum of the
Lyapunov exponents, and the smallest Lyapunov exponent
 of $\nu$, respectively.

\begin{thm}\label{thm_11.4.2}
	Let $f: \bbP^k \to \bbP^k$ be a holomorphic endomorphism of algebraic degree $d \ge 2$.
	Take $\nu\in \mathcal M^+(f)$ and $0<\epsilon\ll \chi_{\min}$.
	Then, for $\nu$-almost all $x \in \pi(Z_\nu(\epsilon))$ and all $0 < \kappa < r(\epsilon)$, there exists
	integers $m_1(\epsilon, x)\geq n(\epsilon)$ and $m_2 (\epsilon,\kappa)\geq 0$
	 such that
	\[
	\frac{h_{\nu}(f)}{2L_{\nu}(f)}
	-c \epsilon
	\leq
	\delta_x (\epsilon, \kappa, N)
	\le
	\frac{h_{\nu}(f)}{2L_{\nu}(f)} + c\epsilon
	\quad \mbox{  for all  } N\geq m_1(\epsilon, x) + m_2 (\epsilon,\kappa),\]
	where 
	$\delta_{x} (\epsilon, \kappa, N)$ is as in \eqref{eq:def-delta-xekN}
	and  $c > 0$  is a constant 
	independent of $\epsilon$, $x$, and $\kappa$.
\end{thm}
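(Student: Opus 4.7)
I would split the estimation of $\delta_x(\epsilon,\kappa,N)$ into its denominator and numerator. For the denominator, Lemma \ref{l:nu-U-compare}(2) yields
\[
-2N\bigl(L_\nu(f)+k(2M+1)\epsilon\bigr) + 2k\log\kappa
\;\le\; \log\volume(U(N,x,\kappa,\epsilon)) \;\le\;
-2N\bigl(L_\nu(f)-k\epsilon\bigr) + 2k\log\kappa,
\]
so, choosing $m_2(\epsilon,\kappa)$ so that $2k|\log\kappa|\le NL_\nu(f)\epsilon$ for all $N\ge m_2(\epsilon,\kappa)$, the $\log\kappa$ term is absorbed into the error and one obtains $\log\volume(U(N,x,\kappa,\epsilon)) = -2NL_\nu(f)(1+O(\epsilon))$.

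For the numerator, I would adapt Mañé's one-dimensional argument, now with the role of the classical Bowen balls played by the sets $U(N,x,\kappa,\epsilon)$, and establish
\[
-\log\nu\bigl(U(N,x,\kappa,\epsilon)\bigr) = Nh_\nu(f)\bigl(1+O(\epsilon)\bigr)
\qquad\text{for } N\ge m_1(\epsilon,x).
\]
The plan is to compare $U(N,x,\kappa,\epsilon)$ either with ordinary Bowen balls (invoking the Brin--Katok local entropy formula) or with the atoms of the refinement $\bigvee_{j=0}^{N-1}f^{-j}\mathcal P$ of a sufficiently fine generating partition $\mathcal P$ (invoking Shannon--McMillan--Breiman). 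Writing $f^j(U)=f^{-(N-j)}_{T^j\hat x}\bigl(B(f^N(x),\kappa e^{-NM\epsilon})\bigr)$ and using Corollary \ref{cor_cordistortion} together with the $\eta$-slow control of $r_{\hat x}$ and $h_{\hat x}$ along the subsequence $\{n_l\}$ of Lemma \ref{cor_cordistortion4}, one gets the inclusion of $U(N,x,\kappa,\epsilon)$ in a Bowen ball $B_N(x,\kappa)$ for $\kappa<\kappa_0(x,\epsilon)$. The reverse comparison (up to sub-exponentially small scales) is made possible by the buffer factor $e^{-NM\epsilon}$ built into $U$, which exactly absorbs the forward expansion $e^{M(n_{l+1}-n_l)}\le e^{MN\epsilon}$ across the gaps of the subsequence $\{n_l\}$.

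Taking the quotient of the two asymptotics yields $\delta_x(\epsilon,\kappa,N) = \frac{h_\nu(f)}{2L_\nu(f)} + O(\epsilon)$ uniformly in $N\ge m_1(\epsilon,x)+m_2(\epsilon,\kappa)$, with the constant $c$ depending only on $k$, $M$, $h_\nu(f)$, and $L_\nu(f)$. The main technical obstacle is the Bowen-ball/partition comparison at all intermediate times $0\le j\le N$: Theorem \ref{thm_distortion} provides distortion control along the single backward orbit $\hat x$, whereas controlling $f^j(U)$ for general $j$ calls for estimates at the shifted orbits $T^j\hat x$, which only lie in $Z'_\nu(\epsilon)$ along the density-one subsequence $\{n_l\}$ of Lemma \ref{cor_cordistortion4}. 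The argument proceeds by matching $N$ to the nearest $n_l$ and using the Lipschitz bound $e^M$ to cross the short excursions $n_{l+1}-n_l<\epsilon n_l$, with the factor $e^{-NM\epsilon}$ absorbing the resulting sub-exponential error; intersecting the full-measure sets on which the distortion bounds, Lemma \ref{cor_cordistortion4}, and the Shannon--McMillan--Breiman/Brin--Katok formula hold simultaneously produces the uniform threshold $m_1(\epsilon,x)$.
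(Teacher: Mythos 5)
Your plan for the denominator is exactly what the paper does: Lemma \ref{l:nu-U-compare}(2) gives two-sided bounds on $\log\volume(U(N,x,\kappa,\epsilon))$ with an additive $2k\log\kappa$ error, which is absorbed once $N\ge m_2(\epsilon,\kappa)$. You also correctly identify both the technical obstacle for the numerator (control of $f^j(U)$ at intermediate times $j$ requires distortion estimates at $T^j\hat x$, available only along the density-one subsequence $\{n_l\}$) and the role of the buffer $e^{-NM\epsilon}$ in crossing the short gaps.

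The gap is in the sandwich mechanism for $\nu(U)$. You propose to compare $U(N,x,\kappa,\epsilon)$ ``either with ordinary Bowen balls (invoking Brin--Katok) or with the atoms of $\bigvee_{j=0}^{N-1}f^{-j}\mathcal P$ of a sufficiently fine generating partition $\mathcal P$,'' as if one of these single devices would give both bounds. Neither does. The Brin--Katok route fails for the lower bound on $\nu(U)$: to have $B_N(x,\kappa')\subseteq U$ you would need $\kappa'\le\kappa e^{-NM\epsilon}$ (since $f^N(B_N(x,\kappa'))\subseteq B(f^N(x),\kappa')$ must fit inside $f^N(U)=B(f^N(x),\kappa e^{-NM\epsilon})$), so the Bowen-ball radius would have to shrink exponentially with $N$, which is not what Brin--Katok controls. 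A single partition cannot do both jobs either, because the two inclusions you need pull in opposite directions on the mesh: for the \emph{upper} bound on $\nu(U)$ you need a partition coarse enough that a dynamical ellipse $E(n)\supseteq U$ sits inside the atom $\mathcal P^n(x)$ (this is what the paper's Lemma \ref{lem_9.1.10} provides, giving $\mathcal P_1$), whereas for the \emph{lower} bound you need a partition fine enough that the atom $\mathcal P^n(x)$ fits inside a dynamical ellipse $F(n)\subseteq U$ (this is what the Ma\~n\'e partition Lemma \ref{l:mane-partition} provides, giving $\mathcal P_2$, with mesh controlled by $|\Jac f|$ to ensure injectivity of $f^n$ on the refined atoms). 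The paper isolates this precisely as Proposition \ref{p:double-ellipses}: it produces two partitions $\mathcal P_1,\mathcal P_2$ and two nested dynamical ellipses with $E(n)\subseteq\mathcal P_1^n(x)$ and $\mathcal P_2^n(x)\subseteq F(n)$, and then sandwiches $U(N,x,\kappa,\epsilon)$ between $F(n_F(N,\kappa))$ and $E(n_E(N,\kappa))$ with $n_E,n_F=N(1+O(\epsilon))$, before applying Shannon--McMillan--Breiman to both partitions. Without spelling out this two-partition construction, your sketch does not close.

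A secondary point: the additive threshold structure $m_1(\epsilon,x)+m_2(\epsilon,\kappa)$ in the statement is not cosmetic---$m_1$ must be independent of $\kappa$ because later (Section \ref{sec_voldim}) one defines $Z^\star_\nu(\epsilon)$ by uniformizing $m_1$ over a large-measure set of $x$ while letting $\kappa\to0$. Your plan of intersecting full-measure sets would produce a threshold depending jointly on $(x,\kappa)$ unless you explicitly split it as the paper does: $m'(\epsilon,x)$ from SMB, $m''(\epsilon,\kappa)$ from the $-\log\kappa$ term in $n_E,n_F$, and $m'''(\epsilon,\kappa)$ from the $\log\kappa$ term in the volume estimate.
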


\begin{rmk}\label{r:1142}
Although Theorem \ref{thm_11.4.2} is stated for points  $x\in \pi(Z_\nu(\epsilon))$,
we can associate to $\hat \nu$-almost every $\hat x\in Z_\nu(\epsilon)$
the integer $m_1(\epsilon, \hat x)\defeq m_1(\epsilon, x_0)$, where, since we have $x_0\in \pi(Z_\nu(\epsilon))$,
the number $m_1(\epsilon, x_0)$ is given by Theorem \ref{thm_11.4.2}. 
\end{rmk}

The following consequence of Theorem \ref{thm_11.4.2}
shows that every $\nu \in \mathcal{M}^+(f)$ is exact volume-dimensional.

\begin{cor} \label{cor_exactdim}
	Let $f: \bbP^k \to \bbP^k$ be an endomorphism of algebraic degree $d \ge 2$
	and take $\nu \in \mathcal M^+ (f).$ For $\nu$-almost every $x \in\mathbb P^k$,
	the local volume dimension $\delta_x$ is well-defined and equal to $(2L_\nu (f))^{-1} h_\nu (f)$.	
\end{cor}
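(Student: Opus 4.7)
The plan is to derive Corollary \ref{cor_exactdim} as an essentially formal consequence of Theorem \ref{thm_11.4.2}, by carefully taking the three iterated limits in the definition of $\overline{\delta}_x$ and $\underline{\delta}_x$ while keeping track of the null sets as $\epsilon$ varies.

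First, I would reduce to $\nu$-a.e. $x \in \pi(Z_\nu)$. Since $\pi_\ast \hat\nu = \nu$ and $\hat\nu(Z_\nu)=1$, we have $\nu(\pi(Z_\nu)) = 1$. The excerpt recalls that $\{Z_\nu(\epsilon)\}_{\epsilon > 0}$ is a non-decreasing exhaustion of $Z_\nu$ (up to a null set), so for $\nu$-a.e. $x \in \pi(Z_\nu)$ there exists $\epsilon_0 = \epsilon_0(x) > 0$ with $x \in \pi(Z_\nu(\epsilon))$ for all $0 < \epsilon < \epsilon_0$; in particular $\overline{\delta}_x, \underline{\delta}_x$ are well-defined for $\nu$-a.e. $x$.

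Next, I would fix a sequence $\epsilon_j \to 0$ with $\epsilon_j \ll \chi_{\min}$. For each $j$, Theorem \ref{thm_11.4.2} provides a full $\nu$-measure subset $E_j \subseteq \pi(Z_\nu(\epsilon_j))$ on which, for every $0 < \kappa < r(\epsilon_j)$, there exist $m_1(\epsilon_j, x)$ and $m_2(\epsilon_j, \kappa)$ with
\[
\frac{h_\nu(f)}{2L_\nu(f)} - c\epsilon_j \;\le\; \delta_x(\epsilon_j, \kappa, N) \;\le\; \frac{h_\nu(f)}{2L_\nu(f)} + c\epsilon_j
\qquad \text{for all } N \geq m_1(\epsilon_j,x) + m_2(\epsilon_j, \kappa).
\]
Let $E \defeq \bigcap_j E_j$; then $\nu(E) = 1$ since it is a countable intersection of full-measure sets. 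For $x \in E$ and any fixed $j$ with $\epsilon_j < \epsilon_0(x)$ and any fixed $0 < \kappa < r(\epsilon_j)$, the bound above is uniform in $N$, so letting $N \to \infty$ yields
\[
\frac{h_\nu(f)}{2L_\nu(f)} - c\epsilon_j \;\le\; \liminf_{N\to\infty}\delta_x(\epsilon_j, \kappa, N) \;\le\; \limsup_{N\to\infty}\delta_x(\epsilon_j, \kappa, N) \;\le\; \frac{h_\nu(f)}{2L_\nu(f)} + c\epsilon_j.
\]
Since these bounds are independent of $\kappa$, the $\liminf_{\kappa\to 0}$ and $\limsup_{\kappa\to 0}$ both fall into the same interval; and since $c$ is independent of everything, sending $\epsilon_j \to 0$ along the sequence forces
\[
\underline{\delta}_x \;=\; \overline{\delta}_x \;=\; \frac{h_\nu(f)}{2L_\nu(f)}.
\]

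The argument is essentially bookkeeping; no genuine obstacle remains once Theorem \ref{thm_11.4.2} is in hand. The only mild subtlety is making sure that the exceptional null sets can be combined over a countable sequence $\epsilon_j \to 0$, which is why I would pick a countable sequence of $\epsilon$'s at the outset rather than working with an uncountable family. All the hard work—the control of $\delta_x(\epsilon,\kappa,N)$ via the Berteloot--Dupont--Molino distortion estimates and the Brin--Katok / Ma\~n\'e entropy formula—is already absorbed into Theorem \ref{thm_11.4.2}.
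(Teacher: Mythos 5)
Your proof follows exactly the same route as the paper: reduce to $x \in \pi(Z_\nu)$ via the nesting of the sets $Z_\nu(\epsilon)$, apply Theorem \ref{thm_11.4.2} for each small $\epsilon$, and pass to the limit. Your observation about picking a countable sequence to combine the exceptional null sets is the right instinct and is implicit in the paper's (terser) argument.

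One small caution about your final step, though: $\overline{\delta}_x$ and $\underline{\delta}_x$ are defined as $\limsup_{\epsilon\to 0}$ and $\liminf_{\epsilon\to 0}$ over \emph{all} small $\epsilon$, so knowing that $g(\epsilon_j) \defeq \limsup_\kappa\limsup_N \delta_x(\epsilon_j,\kappa,N) \to h_\nu/(2L_\nu)$ along a countable sequence only gives $\underline{\delta}_x \le h_\nu/(2L_\nu) \le \overline{\delta}_x$, not equality. To close this, note that $U(N,x,\kappa,\epsilon)$ is nested monotonically in $\epsilon$ (larger $\epsilon$ gives a smaller set since the radius $\kappa\,e^{-NM\epsilon}$ shrinks), so for $\epsilon_{j+1} \le \epsilon \le \epsilon_j$ one has $U_{\epsilon_j} \subseteq U_\epsilon \subseteq U_{\epsilon_{j+1}}$ and hence
\[
\frac{-\log\nu(U_{\epsilon_j})}{-\log\volume(U_{\epsilon_{j+1}})} \le \delta_x(\epsilon,\kappa,N) \le \frac{-\log\nu(U_{\epsilon_{j+1}})}{-\log\volume(U_{\epsilon_j})},
\]
and the uniform (in $\epsilon$) bounds of Lemma \ref{l:nu-U-compare} together with Theorem \ref{thm_11.4.2} control both outer ratios by $h_\nu/(2L_\nu) \pm O(\epsilon_j)$. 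This interpolation is also left unsaid in the paper's one-line proof, so you're at the same level of rigor; but since you explicitly raised the null-set subtlety, it's worth noticing that the countable sequence alone does not finish the argument.
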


\begin{proof}
	Recall that  the family
	 $\{Z_\nu(\epsilon)\}_{\epsilon>0}$
	  is non-decreasing for $\epsilon\to 0$, and that
	we have
	$Z_\nu = \cup_{\epsilon>0} Z_\nu (\epsilon)$ up to a $\nu$-negligible set.
	In particular, for
	$\nu$-almost every
	$x \in \mathbb P^k$  there exists $\epsilon_0 = \epsilon (x_0)>0$ such that
	$x$ belongs to $\pi(Z_\nu (\epsilon))$ for every $0<\epsilon<\epsilon_0$.
	The assertion follows from the definition
	\eqref{eq_localdim}
	of the upper and lower volume dimensions
	 and Theorem
	\ref{thm_11.4.2}.
\end{proof}

The rest of the section is devoted to the proof of Theorem \ref{thm_11.4.2}. 
We will follow the
 general
  strategy presented in
 \cite[Section 11.4]{PU} 
but we will need to use the results
in Section \ref{subsec_dist} to replace the distortion estimates for univalent maps in dimension $1$.

\subsection{Proof of Theorem \ref{thm_11.4.2}: a reduction}
Fix a countable measurable partition $\mathcal P$ of $\mathbb P^k$.
Up to taking the elements of the partition sufficiently small,
we can assume that the \emph{entropy}
$h_{\nu}(f,\mathcal P) $ of the partition $\mathcal P$ satisfies
$h_{\nu}(f)-\epsilon \leq h_{\nu}(f,\mathcal P) \leq h_{\nu}(f)$.
Recall that, by the 
Shannon-McMillan-Breiman Theorem
\cite{Parry69,W00}
for $\nu$-almost every $x \in \mathbb P^k$
we have
$$\lim_{n \to \infty} -\frac{1}{n} \log \nu(\mathcal{P}^{n} (x)  ) =: h_{\nu}(f,\mathcal{P}).$$
Here $\mathcal P^n$ is the partition generated by $\mathcal P, f^{-1} \mathcal P, \dots, f^{-n} \mathcal P$
(i.e., the partition whose elements are the sets of the form
$P_0 \cap f^{-1}(P_1)\cap \ldots \cap f^{-n} (P_{n})$ for $P_0, \dots, P_{n}\in \mathcal P$), and
$\mathcal P^n (x)$ denotes the element of the partition $\mathcal P^n$ containing $x$.

\begin{prop}\label{p:double-ellipses}
	Fix $\nu\in \mathcal M^+(f)$. For every $0<\epsilon\ll \chi_{\min}$
	there exist two
	partitions $\mathcal P_1$ and $\mathcal P_2$ 
	with
	$h_{\nu}(f,\mathcal P_1)\geq h_\nu(f) -\epsilon$
	and four constants
	$b_E,b_F$ (independent of $\epsilon$)
	and
	 $c_E, c_F>0$ (possibly depending on $\epsilon$)
	such that for $\nu$-almost every $x \in \pi (Z_\nu(\epsilon))$
	there 
	exists an integer 
	$m(\epsilon, x)\geq n(\epsilon)$ 
	such that
	for all $n\geq m(\epsilon, x)$, we have
	\[
	E(n) \defeq \mathcal E_x (c_E e^{-n (\chi_j+b_E \epsilon)})
	\subseteq 
	\mathcal P_1^n (x) 
	\quad
	\text{ and }
	\quad
	\mathcal P_2^n (x)
	\subseteq
	F(n) \defeq \mathcal E_x (c_F e^{-n (\chi_j- b_F \epsilon)}).
	\]
\end{prop}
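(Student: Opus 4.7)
The plan is to build $\mathcal{P}_1$ via a Mañé-type construction with controlled boundary $\nu$-measure, while $\mathcal{P}_2$ can be taken to be any sufficiently fine measurable partition. The two inclusions will then follow by feeding the respective geometric properties of $\mathcal{P}_1$ and $\mathcal{P}_2$ into the distortion estimates of Section \ref{subsec_dist}.

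For $\mathcal{P}_1$, I would apply the standard Mañé partition lemma to obtain a measurable partition of $\mathbb P^k$ with diameter less than $r(\epsilon)$, with $h_\nu(f,\mathcal{P}_1)\geq h_\nu(f)-\epsilon$, and with boundary of small measure in the sense $\nu(\{y\colon \mathrm{dist}(y,\partial\mathcal{P}_1)<r\})\leq Cr^\alpha$ for some $C,\alpha>0$ independent of $\epsilon$. By Borel--Cantelli together with the $f$-invariance of $\nu$, for $\nu$-almost every $x$ there is $m_1(\epsilon,x)$ such that $\mathrm{dist}(f^l(x),\partial\mathcal{P}_1)>e^{-l\epsilon}$ for every $l\geq m_1(\epsilon,x)$. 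To deduce $E(n)\subseteq\mathcal{P}_1^n(x)$ it then suffices to bound $\mathrm{diam}(f^l(E(n)))$ by $e^{-l\epsilon}$ for every $0\leq l\leq n$. Reading the diagram of Theorem \ref{thm_distortion} along an orbit $\hat x$ above $x$ in $Z_\nu(\epsilon)$, at a good time $l=n_{l^\star}$ given by Lemma \ref{cor_cordistortion4} the image $f^{n_{l^\star}}(E(n))$ is contained in an ellipse at $f^{n_{l^\star}}(x)$ whose axes are at most $c_E e^{-n(b_E-1)\epsilon}$; for intermediate $l$ one interpolates via the Lipschitz bound $e^M$ over the short gaps $n_{l+1}-n_l<\epsilon n_l$. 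The cleanest way to package this is to observe that $E(n)$ is contained in a set $U(n,x,\kappa_E,\epsilon)$ of Definition \ref{d:U} as soon as $b_E$ is chosen (independently of $\epsilon$) larger than $2M+2$, so that the forward images $f^l(E(n))$ are directly controlled through the factor $e^{-NM\epsilon}$ built into the definition of $U$ and through Lemma \ref{l:nu-U-compare}.

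For $\mathcal{P}_2$, I would simply take any measurable partition whose elements have diameter less than $r(\epsilon)$. By construction, $\mathcal{P}_2^n(x)$ lies in the connected component of $(f^n)^{-1}(B(f^n(x),\mathrm{diam}(\mathcal{P}_2)))$ containing $x$, which for $x\in\pi(Z_\nu(\epsilon))$ equals $f^{-n}_{T^n(\hat x)}(B(f^n(x),\mathrm{diam}(\mathcal{P}_2)))$ for any lift $\hat x$ in $Z_\nu(\epsilon)$. By Corollary \ref{cor_cordistortion} (3c), this set is contained in $\mathcal{E}_x(\mathrm{diam}(\mathcal{P}_2)\,e^{-n(\chi_j-\epsilon)})$ for $n\geq n(\epsilon)$, so $c_F\defeq\mathrm{diam}(\mathcal{P}_2)$ and $b_F\defeq 1$ give the desired inclusion.

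The main obstacle is the construction of $\mathcal{P}_1$: obtaining simultaneously the entropy lower bound $h_\nu(f,\mathcal{P}_1)\geq h_\nu(f)-\epsilon$ and the polynomial decay of the $\nu$-measure of thin tubular neighborhoods of $\partial\mathcal{P}_1$ requires an inductive atom-by-atom construction (as in Mañé's original paper and \cite[Chapter 11]{PU}), combined with a standard regularity argument for the pushforward of $\nu$ onto transverse slices. Once this partition is in hand, the remaining content of the proposition is essentially a direct transcription of the distortion estimates of Section \ref{subsec_dist} and of the interpolation arguments already present in Lemma \ref{cor_cordistortion4} and Lemma \ref{l:nu-U-compare}.
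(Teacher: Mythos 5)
Your construction of $\mathcal P_1$ matches the paper's: the paper invokes \cite[Corollary 9.1.10]{PU} (stated here as Lemma \ref{lem_9.1.10}), which packages exactly the boundary-decay-plus-Borel--Cantelli argument you describe, and the forward-image estimate $f^q(E(n))\subset B(f^q(x),e^{-q\epsilon})$ is obtained, as you say, by reading the inclusions at the good times $n_{l^\star}$ via Corollary \ref{cor_cordistortion}~(3c) and interpolating over the short gaps with the Lipschitz bound $e^M$. (The paper takes $b_E=M+2$; the exact constant is immaterial.) One quibble: your ``cleanest packaging'' via $E(n)\subset U(n,x,\kappa_E,\epsilon)$ does not actually save work, because Lemma \ref{l:nu-U-compare} controls $U$ and $f^N(U)$ but says nothing about the intermediate images $f^q(U)$ for $0<q<N$, which is precisely what is needed here. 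You still have to run the interpolation along the $\{n_l\}$, which is what the paper does directly on $E(n)$.

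The genuine gap is in your treatment of $\mathcal P_2$. Taking an arbitrary partition of constant small diameter $r(\epsilon)$ is not enough. Your claim that $\mathcal P_2^n(x)$ lies inside $f^{-n}_{T^n(\hat x)}\big(B(f^n(x),\mathrm{diam}\,\mathcal P_2)\big)$ tacitly assumes that $f^n$ is injective on $\mathcal P_2^n(x)$, i.e.\ that this set falls inside a single inverse branch. That fails if some $f^j(x)$ is close to the critical set $C(f)$ and the ball $B(f^j(x),r(\epsilon))$ contains critical points: then $\bigcap_{j\le n} f^{-j}B(f^j(x),r(\epsilon))$, and with it $\mathcal P_2^n(x)$, need not sit in a single branch and need not be contained in $F(n)$. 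The paper avoids this by applying Lemma \ref{l:mane-partition} with the diameter function $\rho(x)=c\,r(\epsilon)\min\{1,|\Jac f(x)|\}$, whose linear decay near $C(f)$ forces $f$ to be injective on $B(x,\rho(x))$ for every $x$, and whose $\log$-integrability (needed for Lemma \ref{l:mane-partition}) is guaranteed by the finiteness of $\int\log|\Jac f|\,d\nu$, i.e.\ by $\nu\in\mathcal M^+(f)$. You must build $\mathcal P_2$ with this variable diameter; a constant-diameter partition does not yield the required single-branch containment.
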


We prove the existence of the sequences $E(n)$ and $F(n)$ and partitions
$\mathcal{P}_1$ and $\mathcal{P}_2$ in the next two subsections.
We now show how Theorem \ref{thm_11.4.2} is a consequence of Proposition \ref{p:double-ellipses}.

\begin{proof}[Proof of Theorem \ref{thm_11.4.2} assuming Proposition \ref{p:double-ellipses}]
	We fix $\epsilon>0$ as in the statement, 
	$x\in \pi (Z_\nu(\epsilon))$, and
	$0<\kappa<r(\epsilon)$.
	For every $N \ge n(\epsilon)$
	and $0<\kappa<r(\epsilon)$,
	define the
	 integers $n_E (N, \kappa)$
	and $n_F(N,\kappa)$ 
	as
	\[
	n_E (N,\kappa)
	\defeq 
	\min_j \Big\lfloor
	\frac{
		(\chi_j-\epsilon)N + \log c_E - \log \kappa}{\chi_j+b_E\epsilon}
	\Big\rfloor \]
	and
	\[n_F(N,\kappa)
	\defeq\max_j
	\Big\lceil
	\frac{[\chi_j+(2M+1)\epsilon)]N + \log c_F - \log \kappa}{\chi_j-b_F \epsilon}
	\Big\rceil, \]
	where $b_E,b_F,c_E, c_F$
		are as in Proposition \ref{p:double-ellipses} and we recall that the $\chi_j$'s 
	are the Lyapunov exponents of $\nu$, which are strictly positive.
	Then, by Lemma \ref{l:nu-U-compare} (1) and (2)
	and Proposition \ref{p:double-ellipses}, for all $0<\kappa<r(\epsilon)$, we have
	\begin{equation}\label{e:4241-1}
		\mathcal P^{n_F (N,\kappa)}_2 (x) \subseteq F(n_F(N,\kappa))
		\subseteq 
		U(N,x,\kappa,\epsilon) 
		\subseteq
		E(n_E (N,\kappa)) \subseteq \mathcal P^{n_E(N,\kappa)}_1 (x)
		 \mbox{  for all } N \ge  m(\epsilon,x),
	\end{equation}
	where $m(\epsilon,x)$ is as in Proposition \ref{p:double-ellipses},
	and
	\begin{equation}\label{e:4241-2}
		\kappa^{2k}e^{-2N(L_{\nu}+k(2M+1)\epsilon)} \le \volume(U(N,x,\kappa,\epsilon)) \le
		\kappa^{2k}e^{-2N(L_{\nu}-k\epsilon)}
		\quad \mbox{ for all } N\geq n(\epsilon),
	\end{equation}
	where we recall that 
	$M$ is as in \eqref{eq:def-M}.
	
	\medskip

It follows from \eqref{e:4241-1} and the Shannon-McMillan-Breiman Theorem that there exists
$m'(\epsilon,x)\geq m(\epsilon,x)$ and 
 $m''(\epsilon,\kappa) \gg 1$ such that
\begin{equation}\label{e:4241-num}
(h_\nu(f)-2\epsilon) \Big(
		\lim_{N\to \infty}
		\frac{n_E (N,\kappa) }{N}-\epsilon\Big)
		\leq
		-\frac{\log \nu (U(N,x,\kappa,\epsilon))}{N}
		\leq
		(h_\nu (f)+\epsilon) \Big( \lim_{N\to \infty} \frac{n_F(N,\kappa) }{N}+\epsilon\Big)
\end{equation}
for all $N > m'(\epsilon,x) +  m''(\epsilon,\kappa)$. 
We used here the fact that, since $\kappa<r(\epsilon)$, the integers
 $n_E(N,\kappa)$ and $n_F(N,\kappa)$ are bounded below by quantities
 which are independent of $0<\kappa<r(\epsilon)$.
 Similarly,
 it follows from \eqref{e:4241-2} that there exists $m'''(\epsilon,\kappa) \gg 1$
such that
\begin{equation}\label{e:4241-den}
		2(L_\nu(f) - k\epsilon) - \epsilon
		\leq
		-\frac{\log \volume(U(N,x,\kappa,\epsilon))}{N}
		\leq
		2(L_\nu(f) + (2M+1) \epsilon) + \epsilon
\end{equation}
for all $N > m'''(\epsilon,\kappa) +  n(\epsilon)$.

\medskip

 Setting
\[m_1 (\epsilon,x) \defeq  m'(\epsilon,x) \geq m(\epsilon, x) \geq n(\epsilon)
\quad \mbox{ and } \quad
m_2 (\epsilon,\kappa)\defeq \max \{m''(\epsilon,\kappa), m'''(\epsilon,\kappa)\},\] 
the assertion follows combining \eqref{e:4241-num}, \eqref{e:4241-den},
and the definitions of $n_E (N,\kappa)$ and $n_F (N,\kappa)$.
\end{proof}

\subsection{Proof of Proposition \ref{p:double-ellipses}: the existence of $E(n)$ and $\mathcal{P}_1$}
\label{ss:En}
We will need the following lemma, see for instance
\cite[Corollary 9.1.10]{PU}.

\begin{lem}\label{lem_9.1.10}
Let 
$(\mathcal X,\delta)$
be
 a compact metric space, 
	$f : \mathcal X \to \mathcal X$ 
	a measurable map with respect to the Borel $\sigma$-algebra on $\mathcal X$ 
	and $\nu$ an $f$-invariant 
	Borel probability measure.
	 Then for every $r>0$, there exists
	$\mathcal{X}_0\subseteq \mathcal X$ with $\nu(\mathcal{X}_0)=1$ and
	a finite partition $\mathcal{P}$
	of $\mathcal X$ into Borel sets of positive measure $\nu$ and 
	of diameter
	smaller than
	$r$ such that,
	 for every $\epsilon > 0$ and
	every $x \in \mathcal{X}_0$,
	there exists an integer $m_0=m_0(\epsilon, x)$ 
	such that 
	$$B_{\mathcal X}(f^n(x), e^{-n\epsilon}) \subset \mathcal{P}(f^n(x))
	\quad \mbox{ for every } n \ge m_0,$$
	where $B_{\mathcal X} (y,a)$ denotes the open ball in $\mathcal X$
	of radius $a$ and center $y\in \mathcal X$.
\end{lem}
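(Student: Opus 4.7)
The plan is to construct the partition $\mathcal{P}$ in such a way that the function $g(x)\defeq -\log \delta(x,\partial\mathcal{P})$ belongs to $L^1(\nu)$; the conclusion will then follow from the $f$-invariance of $\nu$ together with Borel--Cantelli.

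First I would build the partition. Cover $\mathcal{X}$ by finitely many open balls $B_{\mathcal{X}}(y_i,r/4)$, $i=1,\dots,N$. For each $i$, the function $s\mapsto \nu(B_{\mathcal{X}}(y_i,s))$ is monotone, hence by Lebesgue's theorem differentiable almost everywhere on $(r/4,r/2)$ with locally integrable derivative. I can therefore choose $s_i\in (r/4,r/2)$ and a constant $C_i>0$ such that
\[
\nu\{x:|\delta(x,y_i)-s_i|<t\}\leq C_i t \quad \text{for all sufficiently small } t>0.
\]
Setting $A_1\defeq B_{\mathcal{X}}(y_1,s_1)$ and $A_i\defeq B_{\mathcal{X}}(y_i,s_i)\setminus \bigcup_{j<i}A_j$ produces a finite Borel partition $\mathcal{P}$ of $\mathcal{X}$ whose atoms all have diameter smaller than $r$; starting from a sufficiently fine cover, any atom of zero $\nu$-measure can be harmlessly absorbed into a neighbour while preserving the diameter bound, so we may assume every atom has positive measure. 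Since $\partial\mathcal{P}\subseteq \bigcup_i \partial B_{\mathcal{X}}(y_i,s_i)$, summing the estimates above gives $\nu\{x:\delta(x,\partial\mathcal{P})<t\}\leq Ct$ for all sufficiently small $t>0$, and the tail formula $\int g\,d\nu=\int_0^\infty \nu\{g>u\}\,du$ then yields $g\in L^1(\nu)$.

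Next I would apply Borel--Cantelli. Fix $\epsilon>0$. By the $f$-invariance of $\nu$ we have $\nu\{g\circ f^n>n\epsilon\}=\nu\{g>n\epsilon\}$ for every $n\geq 1$, and the tail formula again gives $\sum_n \nu\{g>n\epsilon\}\leq \epsilon^{-1}\int g\,d\nu<\infty$. Borel--Cantelli then supplies a full-measure set $\mathcal{X}_0^\epsilon\subseteq \mathcal{X}$ on which $g(f^n(x))\leq n\epsilon$ eventually, which is equivalent to $B_{\mathcal{X}}(f^n(x),e^{-n\epsilon})\subseteq \mathcal{P}(f^n(x))$ for every $n\geq m_0(\epsilon,x)$. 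Setting $\mathcal{X}_0\defeq \bigcap_{k\geq 1} \mathcal{X}_0^{1/k}$ gives a single full-measure set on which the conclusion holds for every $\epsilon>0$. The heart of the argument is the partition construction: once the linear rate $\nu\{\delta(\cdot,\partial\mathcal{P})<t\}=O(t)$ is secured by exploiting the differentiability of the monotone functions $s\mapsto \nu(B_{\mathcal{X}}(y_i,s))$, everything else is a routine consequence of integrability and invariance.
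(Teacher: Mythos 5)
The paper does not prove this lemma itself — it refers to \cite[Corollary 9.1.10]{PU} — and your plan (a ball-based partition with radii $s_i$ chosen so that a log-distance function is $\nu$-integrable, followed by Borel--Cantelli via invariance) is exactly the standard argument behind that citation. However, there is one genuine gap. You assert that $g(f^n(x))\le n\epsilon$, i.e.\ $\delta(f^n(x),\partial\mathcal P)\ge e^{-n\epsilon}$, is \emph{equivalent} to $B_{\mathcal X}(f^n(x),e^{-n\epsilon})\subseteq\mathcal P(f^n(x))$. The implication you actually need (distance bound $\Rightarrow$ inclusion) can fail in a general compact metric space: the atoms $A_i=B(y_i,s_i)\setminus\bigcup_{j<i}B(y_j,s_j)$ need not be open, and a ball $B_{\mathcal X}(z,\rho)$ can leave $A_i$ without ever approaching the \emph{topological} boundary $\partial\mathcal P$. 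For instance in $\mathcal X=[0,1]\cup[2,3]$ the sphere $\{w:\delta(w,0)=2\}=\{2\}$ is nonempty while $\partial B(0,2)=\emptyset$, so $\delta(\cdot,\partial\mathcal P)$ gives no control on how far one may move while staying inside $B(0,2)$. Since in the application $\mathcal X$ is the support of $\nu$, which is typically disconnected, this is not a vacuous concern.

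The repair is small and keeps your measure estimate: replace $g$ by $\tilde g(x):=-\log\min_i|\delta(x,y_i)-s_i|$. Now the needed implication follows directly from the triangle inequality: if $\min_i|\delta(z,y_i)-s_i|\ge\rho$ and $\delta(y,z)<\rho$, then for every $i$ the numbers $\delta(y,y_i)$ and $\delta(z,y_i)$ lie strictly on the same side of $s_i$, hence $y$ and $z$ belong to the same atom, i.e.\ $B_{\mathcal X}(z,\rho)\subseteq\mathcal P(z)$. Your bound $\nu\{|\delta(\cdot,y_i)-s_i|<t\}\le C_i t$ is precisely what shows $\tilde g\in L^1(\nu)$ (note $\tilde g\ge g$, so the integrability you derived is in fact the weaker conclusion), and the Borel--Cantelli step runs unchanged with $\tilde g$ in place of $g$. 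The remaining bookkeeping you mention (absorbing null atoms into neighbours after starting from a finer cover) is fine.
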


Fix $0<\epsilon\ll \chi_{\min}$.
Let $\mathcal{X}_0$,
$\mathcal{P}$,
and $m_0(\epsilon, x)$ 
be as given by Lemma \ref{lem_9.1.10} applied with $\mathcal{X} = \text{Supp } \nu$.
Up to taking $r$ sufficiently small, we can assume that $h_{\nu}(f,\mathcal P) \geq h_\nu(f) -\epsilon$.
Up to replacing $Z_\nu(\epsilon)$ with $\pi^{-1}(\mathcal{X}_0) \cap Z_\nu(\epsilon)$,
we can assume that the conclusion of Lemma \ref{lem_9.1.10} holds for all $x \in \pi (Z_\nu(\epsilon))$. 

\medskip

Fix $x \in \pi(Z_\nu(\epsilon))$. In particular, there exists $\hat x \in Z_\nu(\epsilon)$ with $\pi(\hat x)=x$.  
Let $\{n_l\}_{l\geq 0}$ be the sequence associated to
$\hat x$
by  Lemma \ref{cor_cordistortion4}. 
We fix $l_0\in \mathbb N$
such that $n_{l_0} \geq m_0(\epsilon, x)$.
Recall that $M$ is as in \eqref{eq:def-M}.

\medskip

Consider an integer $n \ge n_{l_0}$ and the dynamical ellipse 
$$E (n) \defeq \mathcal E_x\big( Cr(\epsilon)e^{-n(\chi_{j}  + (M+2 )\epsilon)}\big),$$
where $0<C<1$ is a constant small enough so that
\begin{equation}\label{eq_3}
	f^q(E (n)) \subset \mathcal{P}(f^q(x))\quad
	\mbox{ for every } q \le n_{l_0}.
\end{equation}

We now show that $f^q(E(n)) \subset \mathcal{P}(f^q(x))$ for all $n_{l_0} \leq q \leq n$.
To this end, fix one such  $q$ and let
$l^\star =l^\star (q)\geq l_0$
  be such that $n_{l^\star} \le q < n_{l^\star+1}$.
Since $T^{n_{l^\star}}(\hat{x}) \in Z'_\nu(\epsilon)$ by Lemma \ref{cor_cordistortion4}
and $\pi(T^{n_{l^\star}}(\hat{x})) = f^{n_{l^\star}}(x)$, Theorem \ref{thm_distortion}
and Corollary \ref{cor_cordistortion} (3c) imply that there exists a holomorphic inverse branch
$g_{l^\star} \defeq f_{\widehat {f^{n_{l^\star}} (x)}}^{-n_{l^\star}} : B(f^{n_{l^\star}}(x),r(\epsilon)) \to \bbP^k$
of $f^{n_{l^\star}}$ such that 
$g_{l^\star}(f^{n_{l^\star}}(x)) = x$
and 
$$
\mathcal 
E_x \big(r(\epsilon)e^{-n_{l^\star} (\chi_{j} + \epsilon)}\big)
\subseteq
g_{l^\star}
\big(B(f^{n_{l^\star}}(x), r(\epsilon))\big).$$
Set $$E' (n) \defeq \mathcal E_x \big(r(\epsilon)e^{- n_{l^\star} (\chi_{j} + \epsilon)}\big).$$
Then $E(n) \subset E'(n)$ (by the choice of $C$ and the inequality $n \ge n_{l^\star}$)
and Corollary \ref{cor_cordistortion} (3c)
gives
\[
f^{n_{l^\star}}(E(n)) =
f^{n_{l^\star}}\Big( \mathcal E_x \big( Cr(\epsilon)e^{-n(\chi_{j} + (M+2)
	\epsilon)}\big)\Big)
\subseteq
\mathcal E_{f^{n_{l^\star}} (x)} \left(
C
r(\epsilon)e^{-\chi_{j}(n-n_{l^\star})}e^{\epsilon n_{l^\star}-(M+2)\epsilon n} \right).
\]

Since $n \ge n_{l^\star}$, $0 \le q-n_{l^\star} \le \epsilon n_{l^\star}$ (by the definition of the sequence $\{n_l\}_{l\geq 0}$ in Lemma
\ref{cor_cordistortion4}), $0<Cr(\epsilon)<1$ (as $0< r(\epsilon) <1$ by Corollary \ref{cor_cordistortion}),
$q \le n$, 
$q<n_{l^\star+1}$,
and all the $\chi_j$'s are strictly positive, 
by the definition \eqref{eq:def-M} of $M$ and the above expression we deduce that 
\begin{align*}
	f^q(E(n))
	 &
	= f^{q-n_{l^\star}} (f^{n_{l^\star}} (E(n)))
		 \subseteq 
	\mathcal E_{f^q(x)}
	\left(C
	r(\epsilon)e^{-\chi_{j}(n-n_{l^\star})}e^{\epsilon n_{l^\star} - (M+2)\epsilon n}e^{(q-n_{l^\star})M}\right)\\
	&\subseteq 
	 B \left(f^q(x), e^{\epsilon n_{l^\star}}  e^{-2 \epsilon n  -  M \epsilon n} e^{\epsilon n_{l^\star} M}
	\right)
	=
	 B\left(f^q(x),
	e^{\epsilon(n_{l^\star}-n) M +\epsilon(n_{l^\star}-n)-\epsilon n}\right)\\
	& \subseteq B\left(f^q(x),
	e^{-\epsilon n}\right)
	\subseteq B\left(f^q(x), e^{-\epsilon q}\right).
\end{align*}

As $q\geq n_{l_0} \geq m_0(\epsilon, x)$,
by Lemma \ref{lem_9.1.10}
we have $B\left(f^q(x), e^{-\epsilon q}\right) \subset \mathcal{P}(f^q(x))$. It follows that
$f^q(E(n)) \subset \mathcal{P}(f^q(x))$ for all $n_{l_0} \le q \le n$.
 Together with (\ref{eq_3}),  setting $\mathcal P_1 \defeq \mathcal P$
this inclusion implies that
$E (n) 
\subseteq \mathcal P_1^n (x),$
as desired.

\subsection{Proof of Proposition \ref{p:double-ellipses}: the existence of $F(n)$ and $\mathcal{P}_2$}
We work with the same setting and notations as in Section \ref{ss:En}.
We need the following lemma, see for instance \cite[Lemma 11.3.2]{PU}.
Recall that the entropy of a countable partition $\mathcal P= \{P_i\}$
with respect to a probability measure $\mu$ is defined as
\[
H_\mu (\mathcal P) \defeq \sum_{i} -\mu (P_i)\log (\mu (P_i)). 
\]
\begin{lem} \label{l:mane-partition}
	Let $\mu$ be a Borel probability measure 
	on a bounded subset $A$ of a Euclidean space, and $\rho : A \to (0,1]$ 
	a measurable function such that $\log \rho$ is integrable with respect to $\mu$.
	There exists a countable measurable partition
	$\mathcal{P}$ of $A$ such that $H_{\mu}(\mathcal{P})<\infty$
	and 
	$${\rm diam}(\mathcal{P}(x)) \le \rho(x)
	\quad \mbox{ for } \mu\mbox{-almost every } x \in A.$$
\end{lem}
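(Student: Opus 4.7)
The plan is to build $\mathcal P$ as a refinement of a countable ``level'' partition of $A$ by the size of $\rho$, where on each level one further partitions into cubes whose diameters are small enough to beat $\rho$ pointwise. The control on the entropy of the level partition will come directly from the hypothesis $\int \log\rho \, d\mu > -\infty$, combined with Gibbs' inequality.

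First I would set
\[
A_n \defeq \left\{x \in A : e^{-n} < \rho(x) \le e^{-n+1}\right\}, \qquad n\ge 1,
\]
so that $\{A_n\}_{n\ge 1}$ is a countable measurable partition of $A$. Since $A$ is bounded in a Euclidean space of some dimension $k$, for each $n$ I can cut $A_n$ into Borel pieces of diameter at most $e^{-n}$ by intersecting with a grid of axis-parallel cubes of side $e^{-n}/(2\sqrt{k})$; call this partition $\mathcal P_n$ of $A_n$. The number of non-empty pieces satisfies $|\mathcal P_n|\le C_A\, e^{kn}$ for a constant $C_A$ depending only on $A$ and $k$. Set $\mathcal P \defeq \bigcup_{n\ge 1}\mathcal P_n$. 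For any $x\in A_n$, the element $\mathcal P(x)\in \mathcal P_n$ has diameter $\le e^{-n}<\rho(x)$, which is the desired diameter bound.

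It remains to bound $H_\mu(\mathcal P)$. The refinement identity gives
\[
H_\mu(\mathcal P) = H_\mu\big(\{A_n\}_{n\ge 1}\big) + \sum_{n\ge 1}\mu(A_n)\, H_{\mu|_{A_n}/\mu(A_n)}(\mathcal P_n).
\]
For the second term, each conditional entropy is bounded by $\log|\mathcal P_n|\le kn+\log C_A$, and on $A_n$ we have $-\log\rho(x)\in[n-1,n)$, hence
\[
\sum_{n\ge 1}\mu(A_n)\,(kn+\log C_A)
\le k\int_A (-\log\rho + 1)\,d\mu + \log C_A <\infty.
\]
For the first term $H_\mu(\{A_n\})$, writing $p_n\defeq \mu(A_n)$, I would compare against the probability distribution $q_n\defeq c/n^2$ with $c\defeq 6/\pi^2$. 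Gibbs' inequality yields
\[
H_\mu(\{A_n\}) = -\sum_{n\ge 1} p_n\log p_n \le -\sum_{n\ge 1} p_n\log q_n = 2\sum_{n\ge 1} p_n\log n - \log c,
\]
and since $\log n\le n$ and $\sum_{n\ge 1} n\,p_n\le \int_A(-\log\rho+1)\,d\mu<\infty$, the first term is finite as well. Combining the two bounds gives $H_\mu(\mathcal P)<\infty$ and completes the proof.

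The main technical point is the entropy of the level partition $\{A_n\}$: knowing only $\sum_n n\, p_n<\infty$ one cannot naively conclude finiteness of $-\sum p_n\log p_n$ from $-t\log t$ estimates, and the Gibbs comparison with $q_n\propto n^{-2}$ (or any reference distribution with logarithm controlled by $n$) is the essential trick. Everything else is a straightforward Euclidean covering argument combined with the standard refinement identity for entropy.
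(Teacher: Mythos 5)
Your proof is correct. The paper does not prove this lemma itself but cites \cite[Lemma 11.3.2]{PU}, where the argument is essentially the one you give: slice $A$ into level sets $A_n$ by the value of $-\log\rho$, refine each $A_n$ by a grid of small cubes, bound the conditional entropy via $\log|\mathcal P_n|\lesssim kn$ against $\int(-\log\rho)\,d\mu$, and control $H_\mu(\{A_n\})$ from $\sum_n n\,\mu(A_n)<\infty$. Your use of Gibbs' inequality with the reference distribution $q_n\propto n^{-2}$ is a clean way to handle that last step (the cited source handles the same point with a slightly more hands-on estimate, but the content is identical), so this is a faithful reconstruction of the standard argument rather than a genuinely different route.
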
 

Recall that $C(f)$ denotes the critical set of $f$
 and that $\nu(C(f))=0$.
For $x \notin C(f)$, define the function 
\begin{equation}\label{eq:rho-F}
	\rho (x) \defeq  c\, r(\epsilon) \min \big\{ {1},|\Jac f| \big\},
\end{equation}
where $c<1$ is sufficiently small so that $f$ is injective on the ball $B(x, \rho (x))$
for every $x \in \mathbb P^k\setminus C(f)$.
 Such constant exists because, since the function $\Jac f (x)$ is  holomorphic in $x$,
 there exists a positive constant $c_0$ 
 such that $|\Jac f (x)|\leq c_0 \cdot  \dist (x, C(f))$  for every $x \in \mathbb P^k$.
For the same reason, the  function $\log \rho$ is integrable with respect to $\nu$,
since by assumption the Lyapunov exponents of $\nu$ are not equal to $-\infty$,
hence $\log |\Jac f|$ is integrable with respect to $\nu$.

Consider a partition $\mathcal P$ given by Lemma \ref{l:mane-partition}, applied with
$\mu=\nu$, $A= \text{Supp } \nu$, and the function $\rho$ 
as in \eqref{eq:rho-F}.
In particular, for $\nu$-almost every $x \in \text{Supp }\nu$ we have $\mathcal P(x)\subset B(x,\rho(x))$.
For every $n \ge 1$, define
\[
V_n (x,\rho) \defeq \bigcap_{j=0}^{n-1} f^{-j} B (f^j (x), \rho (f^j (x))).
\]
It follows from the definition of $\rho$ that $f$ is injective on 
$B (f^j (x), \rho (f^j (x)))$ for all $x \in \mathbb P^k$ and  $0\leq j\leq n-1$. 
As a consequence, 
for every $n \ge 1$ and for $\nu$-almost every $x \in \text{Supp }\nu$,
the map
$f^n$ is injective on $V_n (x,\rho)$ and 
$\mathcal P^n (x)\subset V_n (x,\rho)$.
It is then enough to show that,
 for every $n > n(\epsilon)$,
the set $V_n (x,\rho)$ is contained in
a set
$F(n) \defeq \mathcal E_x (c_F e^{-n (\chi_j- b_F \epsilon)} )$,
for some $b_F$ and $c_F$ 
as in the statement of Proposition \ref{p:double-ellipses}.

Let $l^\star$ be the largest index of the sequence $\{n_l\}_{l\geq 0}$
given by Lemma \ref{cor_cordistortion4}
such that $n_{l^\star} \leq n-1$ (such $l^\star$ exists since $n> n(\epsilon)$).
As in Section \ref{ss:En}, set
$g_{l^\star} \defeq f^{-n_{l^\star}}_{\widehat {f^{n_{l^\star}}(x)}}$.
Then $g_{l^\star}$ is well-defined on $B(f^{n_{l^\star}} (x), r(\epsilon))$.
By the above, and in particular by the injectivity
of $f$ on $V_n$,
we have
\[
V_n (x,\rho) \subset g_{l^\star}
\big( B(f^{n_{l^\star}}(x), \rho(f^{n_{l^\star}} (x))) \big).
\]
By Corollary \ref{cor_cordistortion} (3c),
we deduce that
\[
V_n (x,\rho)\subset \mathcal E_{x} \big( K e^{-n_{l^\star} (\chi_j -\epsilon)}\big)
\]
for some constant $K>0$ independent of $x$ and $n$.
 Since $n-1\leq n_{l^\star+1} \leq  (1+\epsilon)n_{l^\star}$, we deduce that
\[
V_n (x,\rho)\subset F'(n) \defeq
\mathcal E_{x}
\big( K e^{-(n-1) (1+\epsilon)^{-1} (\chi_j- \epsilon)} \big).
\]
Set
$F(n) \defeq
\mathcal E_{x}
\big( c_F e^{-n (\chi_j- b_F\epsilon)} \big)$,
where $c_F\defeq K$ and 
$b_F\defeq (1+\epsilon)^{-1}
\min_j (\chi_j+1)$.
The assertion follows.

\medskip

This concludes the proof of Proposition \ref{p:double-ellipses} and therefore also the proof
 of Theorem
\ref{thm_11.4.2}.

\section{Volume dimension of measures in $\mathcal M^+(f)$} \label{sec_voldim}
Let $f \colon \bbP^k \to \bbP^k$ be a holomorphic endomorphism of algebraic degree $d \ge 2$. The 
goal of this section is to define volume dimensions for sets and measures and study their properties.
More specifically, in Sections \ref{sec_vdimsets} and \ref{subsec_eqdefnl} 
we define and study the {\it volume dimension}
$\VD(\nu)$
 for measures $\nu \in \mathcal M^+(f)$ and $\VD_\nu (X)$ for subsets $X$ of the support of $\nu$. 
In Section \ref{subsec_keyprop} we prove a criterion to relate
 the volume dimension 
of a set of positive measure 
to the
local volume dimensions 
defined in Section \ref{sec_proofmain}; see Proposition \ref{prop_2.1}.
 This criterion, 
 together with Theorem \ref{thm_11.4.2}, will allow us to prove Theorem \ref{thm_main} in the next section.

\subsection{Definition of volume dimension and first properties} \label{sec_vdimsets}
Given
 $\epsilon>0$, 
$\kappa >0$,
$N\in \mathbb N$, and $W\subseteq \bbP^k$,
 we consider the collection $\mathcal U^{\kappa}_N(W, \epsilon)$
  of open subsets of $\mathbb P^k$ given by
\[
\mathcal U_N^{\kappa}(W, \epsilon) \defeq \{ 
U\subset \mathbb P^k 
\colon 
\exists x \in W
\text{ such that }   
U=U(N,x,\kappa, \epsilon)
\}
\]
where $U(N,x,\kappa, \epsilon)$
 is as in Definition \ref{d:U}.
Recall in particular that each $U(N,x,\kappa, \epsilon)$ (if it exists)
 is an open neighbourhood of $x$.
	Given $\epsilon>0$ and
	$U \in \bigcup_{\kappa>0, N\geq 0} \mathcal U_N^\kappa(\mathbb P^k, \epsilon)$, 
	we denote by $N(U),\kappa (U)$, and
	$z(U)$
	the parameters associated to $U$ as in that definition, i.e., such that
	$$f^{N(U)} (U) = B(f^{N(U)}(z(U)),\kappa(U) \, e^{-N(U)M\epsilon}),$$
	where
$M$ is as in \eqref{eq:def-M}.

\begin{rmk} \label{rmk_3.1}
Let $U_1 \neq U_2$ have the same parameters $N=N(U_i)$ and
	$\kappa=\kappa(U_i)$
	 and assume that $z(U_1)$ and $z(U_2)$ satisfy $f^N (z(U_1))= f^N (z(U_2)) = w$.
	 Then, we necessarily have $U_1 \cap U_2 = \emptyset$, as both $U_1$ and $U_2$
	 correspond to an inverse branch of $f^N$ defined on a subset of
	$B(w,\kappa)$ containing $w$.
\end{rmk}

We fix now $\nu \in \mathcal M^+ (f)$ and let $Z_\nu$ be as in Definition \ref{d:Z}.
We denote as before by $\chi_{\min}>0$ the smallest Lyapunov exponent of $\nu$.
For every $0<\epsilon\ll \chi_{\min}$, we fix 
$Z_\nu(\epsilon)$, $n(\epsilon)$,
and $r(\epsilon)$ as given by Corollary \ref{cor_cordistortion}
and Lemma \ref{cor_cordistortion4}.

By Theorem \ref{thm_11.4.2} and Remark \ref{r:1142}, for $\hat \nu$-almost every $\hat x\in  Z_\nu(\epsilon)$ 
and every $0<\kappa<r(\epsilon)$
there exist positive
integers $m_1(\epsilon, \hat x)\geq n(\epsilon)$ and $m_2 (\epsilon,\kappa)  \geq 1$
such that the conclusion of Theorem \ref{thm_11.4.2} holds for $N\geq m_1(\epsilon, x) + m_2 (\epsilon,\kappa)$. 
For every $m\in\mathbb N$,
consider the set  $Z_\nu(\epsilon, m) \defeq \{ \hat x \in Z_\nu(\epsilon) \colon n(\epsilon) \le m_1(\epsilon, \hat x) \le m\}$.
 Since $\hat \nu (Z_\nu(\epsilon) \setminus  Z_\nu(\epsilon, m))\to 0$ as $m\to \infty$,
for every $0<\epsilon \ll \chi_{\min}$
 there exists $m(\epsilon) \geq n(\epsilon)$
  such that $\nu\left(\pi(Z_\nu(\epsilon))  \setminus \pi(Z_\nu(\epsilon, m(\epsilon)))  \right)<\epsilon$.
  For every $0<\epsilon\ll \chi_{\min}$,
  we define
\begin{equation}\label{d:Zstar}
Z^\star_\nu(\epsilon) \defeq  Z_\nu(\epsilon, m(\epsilon))
\quad 
\mbox{ and }
\quad 
Z^\star_\nu \defeq \cup_{0<\epsilon\ll\chi_{\min}} Z_\nu^\star (\epsilon).
 \end{equation}
By definition, the conclusion of Theorem \ref{thm_11.4.2} holds
for every $x \in \pi(Z_\nu^\star (\epsilon))$, with $m_1(\epsilon, x)$ independent of $x$.
This fact will not be used in this subsection, but will be crucial in the proof of Proposition \ref{prop_2.1}.
Observe also that $\hat \nu (Z_\nu^\star)=1$ and $\nu (\pi(Z_\nu^\star))=1$.

\begin{rmk}\label{rmk:s4-hyperbolic}
As in Remark \ref{rmk:section2-hyperbolic}, when $X$ is uniformly expanding,
for every $\nu \in \mathcal M_X^+(f)$ we can assume that
$Z_\nu=Z^\star_\nu=Z_\nu^\star(\epsilon)$ for all $0<\epsilon\ll\chi_{\min}$, and that
$\pi(Z_\nu)=X$.
\end{rmk}

We first fix $0<\epsilon\ll \chi_{\min}$ and define a quantity $\VD_\nu^\epsilon(Y)$ for 
every subset
 $Y \subseteq \pi(Z^\star_\nu (\epsilon))$. The definition will depend on both $f$ and $\nu$.

\medskip

For $0< \kappa < r(\epsilon)$ and $\mathbb N \ni N^\star \geq n(\epsilon)$,
we denote by $\mathcal U (\epsilon, \kappa, N^\star)$ the collection of open sets
\begin{equation}\label{eq:def-U-ekN}
\mathcal U(\epsilon, \kappa, N^\star) \defeq
 \bigcup_{N \geq N^\star} \mathcal U^\kappa_{N} (\pi(Z^\star_\nu(\epsilon)), \epsilon).
\end{equation}
 
\begin{lem}\label{l:cover} 
For  every $0<\epsilon\ll \chi_{\min}$, $0< \kappa <r(\epsilon)$ and $\mathbb N \ni N^\star \geq n(\epsilon)$,
the collection $\mathcal U(\epsilon,  \kappa, N^\star)$ is an open cover of $\pi(Z^\star_\nu(\epsilon))$.
\end{lem}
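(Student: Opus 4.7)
The plan is to reduce the claim to a direct application of Lemma \ref{l:nu-U-compare}, which already guarantees that the sets $U(N,x,\kappa,\epsilon)$ are well-defined under precisely the hypotheses in the statement. The only real content beyond that is to observe that each such $U$ is an open neighbourhood of its ``centre'' $x$, so that by varying $x$ over $\pi(Z^\star_\nu(\epsilon))$ one already produces enough sets to cover.

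More precisely, I would argue as follows. Fix $0<\epsilon\ll\chi_{\min}$, $0<\kappa<r(\epsilon)$, and $N^\star\geq n(\epsilon)$, and take an arbitrary $x\in\pi(Z^\star_\nu(\epsilon))$. Since $Z^\star_\nu(\epsilon)\subseteq Z_\nu(\epsilon)$ by the definition \eqref{d:Zstar}, and $N^\star\geq n(\epsilon)$, $\kappa<r(\epsilon)$, Lemma \ref{l:nu-U-compare} applies to $x$ with $N=N^\star$. In particular, the set
\[
U\defeq U(N^\star,x,\kappa,\epsilon)
\]
is well-defined in the sense of Definition \ref{d:U}, and so $U\in\mathcal U^\kappa_{N^\star}(\pi(Z^\star_\nu(\epsilon)),\epsilon)\subseteq \mathcal U(\epsilon,\kappa,N^\star)$ by \eqref{eq:def-U-ekN}.

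It remains to verify that $U$ is open and contains $x$. Openness is immediate: by definition $f^{N^\star}(U)$ is the open ball $B(f^{N^\star}(x),\kappa\, e^{-N^\star M\epsilon})$ and $f^{N^\star}|_U$ is injective, so $U$ is the image of this open ball under a local inverse branch of the holomorphic map $f^{N^\star}$, hence open. That $x\in U$ follows either from the definition (since $f^{N^\star}(x)$ lies in the ball $B(f^{N^\star}(x),\kappa\, e^{-N^\star M\epsilon})$ and $f^{N^\star}|_U$ is injective), or from the inclusion
\[
\mathcal E_{x}\bigl(\kappa\, e^{-N^\star(\chi_j+(2M+1)\epsilon)}\bigr)\subseteq U
\]
provided by Lemma \ref{l:nu-U-compare}(1), which in particular shows that $U$ is a neighbourhood of $x$. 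Since $x\in\pi(Z^\star_\nu(\epsilon))$ was arbitrary, the collection $\mathcal U(\epsilon,\kappa,N^\star)$ is indeed an open cover of $\pi(Z^\star_\nu(\epsilon))$.

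There is no serious obstacle here; the lemma is essentially a bookkeeping consequence of the fact that each of the sets $U(N^\star,x,\kappa,\epsilon)$ ``built at $x$'' is automatically an open neighbourhood of $x$. The only mild subtlety is making sure all the parameter thresholds ($N^\star\geq n(\epsilon)$, $\kappa<r(\epsilon)$, $x\in\pi(Z_\nu(\epsilon))$) required by Lemma \ref{l:nu-U-compare} are satisfied, which is why the statement imposes precisely these conditions.
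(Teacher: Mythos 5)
Your proof is correct and takes essentially the same approach as the paper: both reduce the claim to the fact (from Lemma~\ref{l:nu-U-compare} and the inclusion $Z^\star_\nu(\epsilon)\subseteq Z_\nu(\epsilon)$) that each $U(N,x,\kappa,\epsilon)$ is a well-defined open neighbourhood of $x$ for the stated parameter ranges. Your extra sentences spelling out openness and $x\in U$ are just unpacking what the paper states in one line.
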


\begin{proof}
It follows from Lemma \ref{l:nu-U-compare} and the fact that
$Z^\star_\nu (\epsilon)\subseteq Z_\nu(\epsilon)$ that $U(N,x,\kappa, \epsilon)$
is well-defined (and is an open neighbourhood of $x$) for all $x \in \pi(Z^\star_\nu (\epsilon))$, $0<\kappa<r(\epsilon)$,
and $N\geq n(\epsilon)$. The assertion follows.
\end{proof}

\begin{defn}\label{d:epsilon-cover}
For every 	 $0<\epsilon\ll \chi_{\min}$,
 $Y\subseteq \pi(Z^\star_\nu (\epsilon))$, and $0<\kappa<r(\epsilon)$,
an \emph{$(\epsilon,\kappa)$-cover of $Y$} is a
countable cover $\{U_i\}_{i\geq 1}$
of $Y$ with the property that 
$U_i \in \mathcal U (\epsilon, \kappa, n(\epsilon))$ for all $i$.
An \emph{$\epsilon$-cover} 
is an $(\epsilon,\kappa)$-cover for some $0<\kappa<r(\epsilon)$.
\end{defn}

For every $\alpha \geq 0$ and $Y\subseteq \pi(Z^\star_\nu (\epsilon))$, we define $\Lambda^\epsilon_\alpha (Y)\in [0, + \infty]$ as
\begin{equation}\label{eq:def-lambda}
\Lambda^\epsilon_\alpha (Y)
\defeq
\limsup_{ \kappa \to 0}
\Lambda^{\epsilon, \kappa}_\alpha (Y),
\quad \mbox{where}
\quad
\Lambda^{\epsilon, \kappa}_\alpha (Y)\defeq
\lim_{N^\star \to \infty} \inf_{\{U_i\}}
\sum_{i \ge 1} \volume(U_i)^\alpha
\end{equation}
and the infimum in the second expression is taken over all $(\epsilon,\kappa)$-covers $\{U_i\}_{i \ge 1}$ of $Y$ with $U_i\in \mathcal U(\epsilon, \kappa, N^\star)$ for all $i \ge 1$. Observe that the limit in the second expression above is well-defined, and equal to a supremum over $N^\star \geq n(\epsilon)$,
as the $\mathcal U(\epsilon, \kappa, N^\star)$'s are decreasing collections of covers 
for
 $N^\star \to \infty$. 
We will see below that the function $\alpha \mapsto \Lambda_\alpha^{\epsilon, \kappa}(Y)$ is essentially independent of $\kappa$; see Lemma \ref{lem_indepkappa}.
Hence we will be able to use this approximated 
version of $\Lambda^\epsilon_\alpha (Y)$ in order to study its properties.

\begin{lem}\label{l:non-increasing}
For every $0<\epsilon\ll \chi_{\min}$, $0<\kappa<r(\epsilon)$,
and
 $Y\subseteq \pi(Z^\star_\nu (\epsilon))$,
 the following assertions hold:
 \begin{enumerate}
\item the functions $\alpha \mapsto \Lambda^{\epsilon,\kappa}_\alpha (Y)$  and  $\alpha \mapsto \Lambda^\epsilon_\alpha (Y)$ 
are non-increasing;
\item if 
$ \Lambda^{\epsilon,\kappa}_{\alpha_0} (Y) < \infty$
(resp. $ \Lambda^{\epsilon}_{\alpha_0} (Y) < \infty$)
for some $\alpha_0 \geq 0$,
 then $ \Lambda^{\epsilon,\kappa}_\alpha (Y) = 0$
 (resp. $ \Lambda^{\epsilon}_\alpha (Y) = 0$) 
 for all $\alpha>\alpha_0$.
 \end{enumerate}
\end{lem}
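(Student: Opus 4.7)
The plan is to exploit the uniform volume bound on elements of $\mathcal U(\epsilon, \kappa, N^\star)$ provided by Lemma \ref{l:nu-U-compare} (2). Specifically, for every $U = U(N, x, \kappa, \epsilon) \in \mathcal U(\epsilon, \kappa, N^\star)$, the bound $N \geq N^\star$ together with that lemma gives
\[
\volume(U) \leq \delta(N^\star, \kappa) \defeq \kappa^{2k} e^{-2 N^\star (L_\nu - k\epsilon)},
\]
and, crucially, $\delta(N^\star, \kappa) \to 0$ as $N^\star \to \infty$ (as $L_\nu > 0$ and $\epsilon \ll \chi_{\min} \leq L_\nu/k$). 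This single estimate will drive both parts of the lemma.

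For (1), I would fix $\alpha_1 < \alpha_2$. As soon as $N^\star$ is large enough that $\delta(N^\star, \kappa) \leq 1$, every admissible cover $\{U_i\}$ of $Y$ with $U_i \in \mathcal U(\epsilon, \kappa, N^\star)$ satisfies $\volume(U_i) \leq 1$, hence $\volume(U_i)^{\alpha_2} \leq \volume(U_i)^{\alpha_1}$ termwise. Summing, taking the infimum over such covers, and letting $N^\star \to \infty$ (which is in fact a supremum, as noted right after \eqref{eq:def-lambda}) yields $\Lambda^{\epsilon, \kappa}_{\alpha_2}(Y) \leq \Lambda^{\epsilon, \kappa}_{\alpha_1}(Y)$. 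Applying $\limsup_{\kappa \to 0}$ then transfers the monotonicity to $\Lambda^\epsilon_\alpha(Y)$.

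For (2), writing $\alpha = \alpha_0 + \beta$ with $\beta > 0$, the key termwise inequality is
\[
\volume(U_i)^\alpha = \volume(U_i)^{\alpha_0} \cdot \volume(U_i)^\beta \leq \delta(N^\star, \kappa)^\beta \cdot \volume(U_i)^{\alpha_0}
\]
for every $U_i \in \mathcal U(\epsilon, \kappa, N^\star)$. Summing over $i$ and taking the infimum over all $(\epsilon, \kappa)$-covers of $Y$ with elements in $\mathcal U(\epsilon, \kappa, N^\star)$ gives
\[
\inf_{\{U_i\}} \sum_i \volume(U_i)^\alpha \leq \delta(N^\star, \kappa)^\beta \inf_{\{U_i\}} \sum_i \volume(U_i)^{\alpha_0}.
\]
As $N^\star \to \infty$, the infimum on the right increases to $\Lambda^{\epsilon, \kappa}_{\alpha_0}(Y)$, which is finite by assumption, while $\delta(N^\star, \kappa)^\beta \to 0$. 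Hence the left-hand side tends to $0$, proving $\Lambda^{\epsilon, \kappa}_\alpha(Y) = 0$.

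For the $\Lambda^\epsilon$ version, I would use that $\Lambda^\epsilon_{\alpha_0}(Y) = \limsup_{\kappa \to 0}\Lambda^{\epsilon, \kappa}_{\alpha_0}(Y) < \infty$ forces $\Lambda^{\epsilon, \kappa}_{\alpha_0}(Y)$ to be bounded uniformly by some constant $C < \infty$ on a punctured neighbourhood $(0, \kappa_0)$ of $0$. Applying the argument above for each such $\kappa$ yields $\Lambda^{\epsilon, \kappa}_\alpha(Y) = 0$ for all $\kappa \in (0, \kappa_0)$, and then $\Lambda^\epsilon_\alpha(Y) = \limsup_{\kappa \to 0} \Lambda^{\epsilon, \kappa}_\alpha(Y) = 0$. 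The whole argument is routine once the uniform shrinking of volumes is in hand, so I don't anticipate any real obstacle; the only subtle point is to make sure that one first lets $N^\star \to \infty$ (to pick up the factor $\delta^\beta \to 0$) before taking $\limsup_{\kappa \to 0}$, which is exactly the order in which the two limits appear in the definition \eqref{eq:def-lambda}.
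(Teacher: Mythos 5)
Your proof is correct and takes essentially the same route as the paper: both parts reduce to the $\kappa$-fixed quantity $\Lambda^{\epsilon,\kappa}_\alpha(Y)$, with the key input being the uniform upper bound $\volume(U) \leq \kappa^{2k} e^{-2N^\star(L_\nu-k\epsilon)}$ from Lemma~\ref{l:nu-U-compare}(2), which shrinks to $0$ as $N^\star \to \infty$; the paper's factorization $\volume(U)^{\alpha_2} \leq \eta^{\alpha_2-\alpha_1}\volume(U)^{\alpha_1}$ is the same termwise trick you use with $\delta(N^\star,\kappa)^\beta$. Your transfer to $\Lambda^\epsilon_\alpha$ via a punctured neighbourhood of $\kappa=0$ on which $\Lambda^{\epsilon,\kappa}_{\alpha_0}$ is uniformly bounded correctly fills in what the paper leaves implicit in the sentence ``it is enough to show the two assertions for $\Lambda^{\epsilon,\kappa}_\alpha(Y)$.''
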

\begin{proof}
By the definition \eqref{eq:def-lambda} of $\Lambda_\alpha^{\epsilon} (Y)$ and $\Lambda_\alpha^{\epsilon, \kappa} (Y)$, it is enough to show the two assertions for $\Lambda^{\epsilon, \kappa}_\alpha (Y)$ for a given $\kappa$ as in the statement.

The first property is clear from the definition of $\Lambda^{\epsilon,\kappa}_\alpha(Y)$ and the fact that, up to taking $N^\star$ sufficiently large, we can assume that the volume of all the $U_i$'s
 is less than 1 in the definition of $\Lambda^{\epsilon,\kappa}_\alpha (Y)$; see Lemma \ref{l:nu-U-compare} (2). If $\alpha_1 < \alpha_2$, then, for every $\eta>0$ and up to taking $N^\star$ sufficiently large, for every $U\in \mathcal U (\epsilon,  \kappa, N^\star)$ we also have
	\[
	\volume(U)^{\alpha_2}
	= \volume(U)^{\alpha_1 + (\alpha_2-\alpha_1)} \leq
	\eta^{(\alpha_2 - \alpha_1)}\cdot
	 \volume(U)^{\alpha_1}.
	\]
As $\eta$ can be taken arbitrarily small and $\Lambda^{\epsilon,\kappa}_{\alpha_1} (Y)$ is finite, this gives $\Lambda^{\epsilon,\kappa}_{\alpha_2} (Y)=0$. The assertion follows.
\end{proof}

Because of Lemma \ref{l:non-increasing}, the following definition is well-posed.

\begin{defn}\label{d:vd-X}
For every $0<\epsilon\ll \chi_{\min}$ 
and
	$Y\subseteq \pi(Z^\star_\nu (\epsilon))$, we set
	\[
	\VD^\epsilon_\nu(Y) \defeq 
	\sup 
	\{ \alpha :  \Lambda^\epsilon_\alpha(Y) = \infty \}=
	\inf\{ \alpha : \Lambda^\epsilon_\alpha(Y) = 0\}.
	\]
	Similarly, for every $0<\kappa<r(\epsilon)$, we also set
	\[\VD^{\epsilon, \kappa}_{\nu}(Y) \defeq
	\sup \{ \alpha :  \Lambda^{\epsilon,\kappa}_\alpha(Y) = \infty\}=
	 \inf\{\alpha : \Lambda^{\epsilon,\kappa}_\alpha (Y)  = 0\}.\]
\end{defn}

\begin{rmk}
The definition of $\VD^{\epsilon,\kappa}_\nu (Y)$
will not be needed in this section, but  Lemma \ref{lem:vdek-vde} will be used 
in the proof of Proposition \ref{prop_confmeasure2}.
\end{rmk}

\begin{lem}\label{l:increasing-epsilon}
For every $0<\epsilon\ll \chi_{\min}$, $0<\kappa<r(\epsilon)$,
 and 
  $Y_1\subseteq Y_2 \subseteq \pi(Z^\star_\nu (\epsilon))$, we have
 \[\VD_\nu^{\epsilon,\kappa} (Y_1)\leq \VD_\nu^{\epsilon,\kappa} (Y_2)
 \quad
 \mbox{ and }
 \quad \VD_\nu^\epsilon (Y_1)\leq \VD_\nu^\epsilon (Y_2).\]
\end{lem}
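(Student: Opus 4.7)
The plan is to deduce both inequalities from the standard observation that any cover of the larger set $Y_2$ is automatically a cover of the smaller set $Y_1$. Concretely, fix $0<\epsilon\ll\chi_{\min}$, $0<\kappa<r(\epsilon)$, $N^\star\geq n(\epsilon)$, and $\alpha\geq 0$, and let $\{U_i\}_{i\geq 1}$ be any $(\epsilon,\kappa)$-cover of $Y_2$ with $U_i\in\mathcal U(\epsilon,\kappa,N^\star)$ for every $i$. Since $Y_1\subseteq Y_2$, the same collection $\{U_i\}_{i\geq 1}$ also covers $Y_1$, and it is of the required form in Definition \ref{d:epsilon-cover}. Passing to the infimum over such covers in the definition \eqref{eq:def-lambda} of $\Lambda^{\epsilon,\kappa}_\alpha$ therefore gives
\[
\inf_{\{U_i\}\text{ cover of }Y_1}\sum_{i\geq 1}\volume(U_i)^\alpha
\;\leq\;
\inf_{\{U_i\}\text{ cover of }Y_2}\sum_{i\geq 1}\volume(U_i)^\alpha,
\]
because the first infimum is taken over a larger collection.

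Letting $N^\star\to\infty$ yields $\Lambda^{\epsilon,\kappa}_\alpha(Y_1)\leq \Lambda^{\epsilon,\kappa}_\alpha(Y_2)$, and then taking $\limsup_{\kappa\to 0}$ gives $\Lambda^\epsilon_\alpha(Y_1)\leq \Lambda^\epsilon_\alpha(Y_2)$. In particular, the set of $\alpha$ for which $\Lambda^{\epsilon,\kappa}_\alpha(Y_1)=0$ contains the corresponding set for $Y_2$, and similarly for $\Lambda^\epsilon_\alpha$. By the infimum characterization of $\VD^{\epsilon,\kappa}_\nu$ and $\VD^\epsilon_\nu$ in Definition \ref{d:vd-X}, this immediately gives
\[
\VD^{\epsilon,\kappa}_\nu(Y_1)\leq \VD^{\epsilon,\kappa}_\nu(Y_2)
\quad\text{ and }\quad
\VD^\epsilon_\nu(Y_1)\leq \VD^\epsilon_\nu(Y_2),
\]
which is the claim.

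There is essentially no obstacle here: the statement is a routine monotonicity property, and the proof reduces to the fact that the collection of admissible covers of $Y_1$ contains that of $Y_2$. The only point to verify carefully is that the cover $\{U_i\}$ produced for $Y_2$ is genuinely admissible for $Y_1$ in the sense of Definition \ref{d:epsilon-cover}, but this is built into the definition because the admissibility condition concerns only the sets $U_i$ (their membership in $\mathcal U(\epsilon,\kappa,N^\star)$) and not the points of $Y_j$ themselves.
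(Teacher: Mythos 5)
Your proof is correct and is essentially the same one-line argument the paper uses: every $(\epsilon,\kappa)$-cover of $Y_2$ is automatically a cover of $Y_1$, so the infimum defining $\Lambda^{\epsilon,\kappa}_\alpha(Y_1)$ is over a larger collection, yielding $\Lambda^{\epsilon,\kappa}_\alpha(Y_1)\leq\Lambda^{\epsilon,\kappa}_\alpha(Y_2)$ and hence the monotonicity of both dimension quantities. You have simply spelled out the intermediate steps that the paper leaves implicit.
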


\begin{proof}
For every $0<\kappa<r(\epsilon)$,
every $(\epsilon,\kappa)$-cover 
of $Y_2$ is also an $(\epsilon,\kappa)$-cover of $Y_1$.
The assertion follows.
\end{proof}

In the next lemma, we will use the following form of Besicovitch's covering
theorem \cite{Besicovitch45}.

\begin{thm} \label{thm_Besicovitch}
Let $n \ge 1$ be an integer. There exists a constant $b(n) > 0$ such that the following claim is true.
If $A$ is a bounded subset of $\mathbb{R}^{n}$, then for any function $r \colon A \to (0,\infty)$ there exists
a countable subset
$\{x_m \colon m \in \mathbb N\}$
of $A$ 
such that the collection of open balls
$\mathcal{B}(A,r) \defeq \{B(x_m, r(x_m))\colon m\in \mathbb N \}$ covers $A$ and can be decomposed into $b(n)$
families whose elements are disjoint.
\end{thm}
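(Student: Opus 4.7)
The plan is to give a sketch of the standard proof of Besicovitch's covering theorem, which proceeds in two stages: a greedy selection producing a cover with bounded overlap, followed by a coloring argument that turns bounded overlap into a decomposition into $b(n)$ disjoint subfamilies.

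First I would handle a trivial reduction: if $\sup_{x\in A}r(x)=\infty$ one ball suffices, so assume $R_0\defeq\sup_{x\in A}r(x)<\infty$. I would then run a greedy selection. Set $A_1\defeq A$ and pick $x_1\in A_1$ with $r(x_1)\geq\tfrac{1}{2}\sup_{x\in A_1}r(x)$. Inductively, having chosen $x_1,\dots,x_{m-1}$, put $A_m\defeq A\setminus\bigcup_{j<m}B(x_j,r(x_j))$; if $A_m\neq\emptyset$, pick $x_m\in A_m$ with $r(x_m)\geq\tfrac{1}{2}\sup_{x\in A_m}r(x)$. The construction enforces $x_m\notin B(x_j,r(x_j))$ for $j<m$, and the selection rule forces $r(x_j)\geq \tfrac{1}{2}r(x_m)$ for $j\leq m$. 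I would then show that $A\subseteq\bigcup_m B(x_m,r(x_m))$: using boundedness of $A$ and the separation property $|x_j-x_m|\geq r(x_j)\geq r(x_m)/2$ for $j<m$, the radii $r(x_m)$ must tend to $0$ (otherwise infinitely many pairwise well-separated centers would fit in a bounded set), and a point of $A$ not covered would contradict the selection rule once the supremum of radii in the residual set drops below a threshold.

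The key geometric step is bounded overlap: there is a constant $N(n)$, depending only on the dimension $n$ of the ambient space, such that every point $y\in\mathbb{R}^n$ lies in at most $N(n)$ of the chosen balls $B(x_m,r(x_m))$. I would argue this by fixing $y$ and the indices $J\defeq\{m:y\in B(x_m,r(x_m))\}$, and examining the unit vectors $u_m\defeq(x_m-y)/|x_m-y|$ for $m\in J$ (treating the case $y=x_m$ separately). Using the selection inequalities ($r(x_j)\geq r(x_m)/2$ for $j<m$) together with $x_m\notin B(x_j,r(x_j))$ and $|x_j-y|,|x_m-y|\leq\max(r(x_j),r(x_m))$, one obtains a uniform lower bound on the angle $\angle(u_j,u_m)$, hence on $|u_j-u_m|$. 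A packing argument for the unit sphere $S^{n-1}\subset\mathbb{R}^n$ then caps $|J|$ by a constant $N(n)$.

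Finally I would turn bounded overlap into the required decomposition by greedy coloring. Process the selected balls in order $B_1,B_2,\ldots$; assign to $B_m$ the smallest color in $\{1,\dots,N(n)\}$ (or in $\{1,\dots,b(n)\}$ with $b(n)$ a slightly larger dimensional constant after a standard refinement) not used by any earlier $B_j$ with $B_j\cap B_m\neq\emptyset$. The bounded overlap property guarantees that the number of forbidden colors at each step is controlled by a constant $b(n)$ depending only on $n$, so a valid coloring exists and each color class is pairwise disjoint. The main obstacle is the geometric step giving the uniform angular separation on $S^{n-1}$; everything else is bookkeeping around the greedy construction. Since Theorem \ref{thm_Besicovitch} will only be invoked in $\mathbb{R}^{2k}$, no further refinement of $b(n)$ is needed.
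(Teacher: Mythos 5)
Note first that the paper does not prove this statement: Theorem \ref{thm_Besicovitch} is the classical Besicovitch covering theorem, stated with a citation to \cite{Besicovitch45} and used as a black box, so there is no in-paper argument to compare your sketch against.

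Your sketch follows a standard route (greedy $\tfrac{1}{2}$-maximal selection, bounded overlap, greedy coloring), and the skeleton is right, but two steps are glossed in a way that would not survive as a full proof. First, the claimed uniform pairwise angle bound is false as stated: if $y$ is much closer to a chosen center $x_j$ than $r(x_j)$, then the unit vector $u_j=(x_j-y)/|x_j-y|$ can point in any direction, so there is no uniform lower bound on $|u_j-u_m|$ for that pair; the usual proofs fix this with a case split (on the ratio $|y-x_j|/r(x_j)$ and/or on comparable versus disparate radii) and a separate counting argument for the degenerate cases. Second, for the greedy coloring to terminate with $b(n)$ colors you need the stronger claim that each $B_m$ meets at most $b(n)-1$ of the earlier balls $B_1,\dots,B_{m-1}$; this is not an immediate consequence of pointwise bounded overlap (one ball can intersect arbitrarily many others even if every point is covered at most $N$ times), and it requires its own radius-comparison and packing argument exploiting the selection order. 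Both gaps are fillable, and they are harmless for the paper since the theorem is invoked without proof, but they are genuine holes if the sketch were to be expanded into a complete argument.
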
 

\begin{lem}\label{lem_vdbdd-epsilon}
For every $0<\epsilon \ll \chi_{\min}$, $0<\kappa<r(\epsilon)$,
and
 $Y \subseteq \pi(Z_\nu^\star (\epsilon))$,
 we have
 \[\VD^\epsilon_{\nu}(Y) \le 1 \quad \mbox{ and }
 \quad
 \VD^{\epsilon,\kappa}_{\nu}(Y) \le 1.
 \]
\end{lem}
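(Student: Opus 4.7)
The plan is to show that $\Lambda^{\epsilon,\kappa}_1(Y)<\infty$ for every $0<\kappa<r(\epsilon)$. By Lemma \ref{l:non-increasing}, this forces $\Lambda^{\epsilon,\kappa}_\alpha(Y)=0$ for all $\alpha>1$, whence $\VD^{\epsilon,\kappa}_\nu(Y)\leq 1$. Taking the $\limsup$ in $\kappa$ gives $\Lambda^\epsilon_1(Y)<\infty$ and hence $\VD^\epsilon_\nu(Y)\leq 1$. So the only task is to exhibit, for each $N^\star\geq n(\epsilon)$, a countable cover of $Y$ by elements of $\mathcal U(\epsilon,\kappa,N^\star)$ whose total volume is bounded by a constant depending only on $k$.

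Fix $N=N^\star$ and set $\rho\defeq\kappa\, e^{-NM\epsilon}$, so that for every $x\in Y\subseteq\pi(Z^\star_\nu(\epsilon))$ one has $f^N(U(N,x,\kappa,\epsilon))=B(f^N(x),\rho)$. I apply Theorem \ref{thm_Besicovitch} (in suitable local Euclidean charts covering $\mathbb P^k$) to the family $\{B(f^N(y),\rho/2):y\in Y\}$, extracting a countable subcover $\{B(f^N(y_i),\rho/2)\}_i$ of $f^N(Y)$ which decomposes into $b(2k)$ subfamilies of pairwise disjoint balls. For each $i$, I consider the connected components $V_{i,\sigma}$ of $f^{-N}(B(f^N(y_i),\rho/2))$ that meet $Y$, and for each such component I pick a representative $z_{i,\sigma}\in V_{i,\sigma}\cap Y\subseteq\pi(Z^\star_\nu(\epsilon))$. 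The inequality $|f^N(z_{i,\sigma})-f^N(y_i)|<\rho/2$ gives $B(f^N(y_i),\rho/2)\subseteq B(f^N(z_{i,\sigma}),\rho)$, so $V_{i,\sigma}$ is contained in the preimage branch of $B(f^N(z_{i,\sigma}),\rho)$ through $z_{i,\sigma}$, namely $U(N,z_{i,\sigma},\kappa,\epsilon)$. Therefore the family $\mathcal C\defeq\{U(N,z_{i,\sigma},\kappa,\epsilon)\}_{i,\sigma}$ is contained in $\mathcal U^\kappa_N(\pi(Z^\star_\nu(\epsilon)),\epsilon)\subseteq\mathcal U(\epsilon,\kappa,N^\star)$ and covers $Y$: any $y\in Y$ has $f^N(y)\in B(f^N(y_i),\rho/2)$ for some $i$, so $y$ lies in some $V_{i,\sigma}\subseteq U(N,z_{i,\sigma},\kappa,\epsilon)$.

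It then remains to bound the pointwise multiplicity of $\mathcal C$. Suppose $y\in U(N,z_{i,\sigma},\kappa,\epsilon)$. Then $|f^N(y)-f^N(z_{i,\sigma})|<\rho$ forces $f^N(y_i)\in B(f^N(y),3\rho/2)$, and since each of the $b(2k)$ subfamilies is disjoint and consists of balls of radius $\rho/2$, a volume comparison gives that the number of such indices $i$ is bounded by some $C_1(k)$. Moreover, the fact that $y$ and $z_{i,\sigma}$ lie in a common preimage branch of $B(f^N(z_{i,\sigma}),\rho)$ implies $z_{i,\sigma}$ lies in the (unique) preimage branch $W$ of $B(f^N(y),3\rho/2)$ containing $y$; since $f^N|_W$ is injective, $W$ meets $f^{-N}(B(f^N(y_i),\rho/2))$ in at most one connected component, so for each such $i$ at most one $\sigma$ contributes. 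Combining these two observations, each $y$ is covered by at most $C(k)\defeq C_1(k)$ elements of $\mathcal C$, yielding
\[
\sum_{i,\sigma}\volume\bigl(U(N,z_{i,\sigma},\kappa,\epsilon)\bigr)\leq C(k)\,\volume\Bigl(\bigcup_{i,\sigma}U(N,z_{i,\sigma},\kappa,\epsilon)\Bigr)\leq C(k)\,\volume(\mathbb P^k),
\]
a bound independent of $N^\star$ and $\kappa$, which is what we needed.

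The main obstacle is the compatibility between the Besicovitch covering operation, performed in the image space $\mathbb P^k$ where the relevant sets are honest balls, and the formal requirement that covers of $Y$ consist of sets of the form $U(N,x,\kappa,\epsilon)$ centered at points of $Y$. The decisive technical choice is to run Besicovitch with balls of radius $\rho/2$ rather than $\rho$: this extra factor of $2$ ensures that every relevant preimage component of an image ball is swallowed by a $U$-set of the correct parameter $\kappa$, so that the lifted family simultaneously covers $Y$ and inherits the bounded multiplicity of its image.
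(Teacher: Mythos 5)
Your route differs from the paper's in an interesting way: the paper applies Besicovitch at radius $\rho = \kappa\,e^{-N^\star M\epsilon}$ to $f^{N^\star}(\pi(Z^\star_\nu(\epsilon)))$ and pulls each ball back along inverse branches rooted at the preimages of its \emph{center} that lie in $\pi(Z^\star_\nu(\epsilon))$, getting disjointness within each Besicovitch subfamily essentially for free. You instead run Besicovitch at radius $\rho/2$ and pull back along branches centered at arbitrary points of $Y$ in the preimage components, trading disjointness for a bounded-multiplicity argument. Your shrink-by-half device is actually the more careful of the two: it avoids having to check that, for a given $x\in\pi(Z^\star_\nu(\epsilon))$, the preimage of the Besicovitch center along the branch containing $x$ is itself in $\pi(Z^\star_\nu(\epsilon))$, a point the paper passes over without comment.

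The gap is in the multiplicity step. You assert that $y$ and $z_{i,\sigma}$ lying in a common branch of $B(f^N(z_{i,\sigma}),\rho)$ puts $z_{i,\sigma}$ in ``the (unique) preimage branch $W$ of $B(f^N(y),3\rho/2)$ containing $y$,'' with $f^N|_W$ injective. Two problems: first, you only have $B(f^N(z_{i,\sigma}),\rho)\subseteq B(f^N(y),2\rho)$, not $B(f^N(y),3\rho/2)$, so the containment claim needs the larger radius; second, and more seriously, injectivity of $f^N$ on that component is not free — Lemma~\ref{l:nu-U-compare} only guarantees a well-defined injective branch of parameter $\kappa'$ when $\kappa' < r(\epsilon)$, so you would need $2\kappa < r(\epsilon)$, whereas the lemma is stated for all $\kappa<r(\epsilon)$. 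The fix is to avoid $W$ altogether: for two indices $\sigma_1,\sigma_2$ with $y\in U_{i,\sigma_1}\cap U_{i,\sigma_2}$, the inverse branches $g_j \colon B(f^N(z_{i,\sigma_j}),\rho)\to U_{i,\sigma_j}$ both exist (these \emph{are} certified by Lemma~\ref{l:nu-U-compare}), both contain $B(f^N(y_i),\rho/2)$ in their (convex) common domain of definition, and agree near $f^N(y)$, hence on all of $B(f^N(y_i),\rho/2)$; then $z_{i,\sigma_1}=g_1(f^N(z_{i,\sigma_1}))=g_2(f^N(z_{i,\sigma_1}))$ and $z_{i,\sigma_2}=g_2(f^N(z_{i,\sigma_2}))$ lie in the single connected set $g_2(B(f^N(y_i),\rho/2))$, so $V_{i,\sigma_1}=V_{i,\sigma_2}$, forcing $\sigma_1=\sigma_2$. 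With that patch, your multiplicity bound — and hence the whole proof — goes through for every $\kappa<r(\epsilon)$.
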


\begin{proof}
By Lemma \ref{l:increasing-epsilon}, it is enough to show the statement for $Y= \pi (Z^\star_\nu(\epsilon))$.
By \eqref{eq:def-lambda} and the Definition \ref{d:vd-X} of $\VD^\epsilon_\nu (Y)$
and $\VD^{\epsilon,\kappa}_\nu (Y)$, it is enough to show that, for any $0<\kappa<r(\epsilon)$
and $N^\star> n(\epsilon)$, there exists an $(\epsilon,\kappa)$-cover $\{U_i\}$
of $\pi(Z_\nu^\star(\epsilon))$ with $N(U_i)\geq N^\star$ for all $i \ge 1$ and such that
\[\sum_{i}\volume (U_i)<C<\infty\]
for some constant $C$ independent of $\kappa$. Indeed, by \eqref{eq:def-lambda}, this shows that 
$\Lambda_1^{\epsilon,\kappa}(\pi(Z^\star_\nu(\epsilon)))<\infty$,
which implies
 that $\VD_{\nu}^{\epsilon,\kappa}(\pi(Z^\star_\nu(\epsilon))) \le 1$.
 As $C$ is independent of $\kappa$, this also gives 
$\Lambda_1^\epsilon(\pi(Z^\star_\nu(\epsilon)))<\infty$
and thus
$\VD_{\nu}^\epsilon(\pi(Z^\star_\nu(\epsilon))) \le 1$, as desired.

\medskip

Fix $N^\star \geq n(\epsilon)$ and $0<\kappa<r(\epsilon)$.
Theorem \ref{thm_Besicovitch} applied with $n=2k$, $A\defeq f^{N^\star}(\pi(Z^\star_\nu(\epsilon)))$,
and $r \equiv \kappa \, e^{-N^\star M \epsilon}$, gives $b(2k)$
collections $\mathcal B_j$, $1\leq j \leq b(2k)$, of disjoint open balls
$\{B_{j,l}\}_{l \ge 1}$ centred on $f^{N^\star}(\pi(Z^\star_\nu(\epsilon)))$ and of radius $r$ such that  $A\subset \cup_{j,l}B_{j,l}$.
We work here in local charts; see also Remark \ref{rmk_lift}.
 
Consider an element $B_{j^\star, l^\star}$ of the collection $\{B_{j,l}\}_{j,l}$.
By construction, its center belongs to $f^{N^\star}(Z^\star_\nu (\epsilon))$.
Denote by  $x_{j^\star,l^\star}^{1}, \dots, x_{j^\star,l^\star}^{m(j^\star, l^\star)}$
the preimages by $f^{N^\star}$ of the center of $B_{j^\star, l^\star}$
which belong to $\pi(Z^\star_\nu(\epsilon))$.
For each $1\leq q \leq m(j^\star, l^\star)$, choose an orbit $\hat{x} = \hat {x} (j^\star,l^\star,q) \in Z_\nu^\star(\epsilon)$
such that 
$\pi_0 (\hat {x} (j^\star,l^\star, q)) = x_{j^\star,l^\star}^{q}$
(observe that  $\pi_{N^\star} (\hat {x} (j^\star, l^\star,q))$
 is necessarily the center of $B_{j^\star, l^\star}$).
 The inverse branch $f_{T^{N^\star} (\hat {x} (j^\star,l^\star, q))}^{-N^\star}$ is well-defined on the ball $B_{j^\star,l^\star}$
 by the choice of the function $r$ and Lemma \ref{l:nu-U-compare}. More precisely,
 the image of each $B_{j,l}$ under any of such branches is of the
 form $U(N^\star,x,\kappa,\epsilon)$ for some $x \in \pi(Z^\star_\nu(\epsilon))$. 
The 
images associated to the same $B_{j,l}$ are disjoint; see Remark \ref{rmk_3.1}.
Similarly,
 any two such images are also 
 disjoint whenever the corresponding balls $B_{j_0, l_0}$ and $B_{j_1, l_1}$ are disjoint.
 Observe that this in particular applies whenever $j_0=j_1$, since each collection $\mathcal B_{j}$
 consists of disjoint balls.
 
By construction, we have
\[\pi(Z_\nu^\star(\epsilon))
\subseteq
\bigcup_{j =1}^{b(2k)} \bigcup_{l \ge 1} \bigcup_{q=1}^{m(j,l)}
  f_{T^{N^\star} ({\hat x} (j,l,q))}^{-N^\star}(B_{j,l}).
\]
By the arguments above, we have
\[\sum_{j = 1}^{b(2k)}\sum_{l \ge 1} 
\sum_{1\leq q \leq m(j,l)}
\volume \big(
f_{ T^{N^\star}({\hat x} (j,l,q))}^{-N^\star}
(B_{j,l})
\big)
 \leq C' b(2k) \volume(\bbP^k),\]
where the positive constant $C'$ 
is due to the use of local charts,
and is in particular independent of $\kappa$.
This completes the proof.
\end{proof}

Observe that, for every $0<\gamma<1$, there exists a positive 
integer $\theta = \theta_\gamma$ with the property that it is
possible to cover any
ball of radius $r$ in $\mathbb P^k$ with a finite number
$\theta$ of open balls of radius $\gamma r$
(the constant $\theta$ also depends on the dimension $k$, but we omit this dependence since $k$ is fixed).

For every $0<\epsilon \ll \chi_{\min}$, define also the constant
\begin{equation}\label{eq:def-ell}
\ell(\epsilon) \defeq \frac{L_\nu+k(2M+1)\epsilon}{L_\nu - k\epsilon}.
\end{equation}
Observe that $\ell(\epsilon)>1$ for all $\epsilon$ as above, and we have $\ell(\epsilon) = 1 +O(\epsilon)$
for $\epsilon \to 0$.

\begin{lem}\label{lem_indepkappa}
Fix $0 < \epsilon \ll \chi_{\min}$,
$0 < \kappa_1 < \kappa_2 < r(\epsilon)/3$, and $Y\subseteq \pi(Z^\star_\nu (\epsilon))$. 
For every $\alpha \ge 0$, we have
\begin{equation} \label{eq_lambdaineqkappa}
\left(
\kappa_1 / \kappa_2^{\ell(\epsilon)}
\right)^{2k\alpha}
\Lambda_{\alpha \ell(\epsilon)}^{\epsilon, \kappa_2} (Y)
\leq
\Lambda_{\alpha}^{\epsilon, \kappa_1} (Y)
\leq 
\theta_{3\kappa_2/\kappa_1} 
\left(
\kappa_1^{\ell(\epsilon)}/\kappa_2
\right)^{2k\alpha}
\Lambda_{\alpha \ell(\epsilon)^{-1}}^{\epsilon, \kappa_2} (Y),
\end{equation}
where  $\ell(\epsilon)>1$ is as in \eqref{eq:def-ell}
 and $\theta_{3\kappa_2/\kappa_1}>0$ is as above.
In particular, for every $0<\epsilon\ll \chi_{\min}$, $Y\subseteq \pi(Z_\nu^\star(\epsilon))$,
 and $0<\kappa_1<\kappa_2 < r(\epsilon)/3$,
we have
\begin{equation} \label{eq_ineqkappa}
\ell(\epsilon)^{-1}
\VD^{\epsilon, \kappa_2}_\nu (Y)
\leq
\VD^{\epsilon, \kappa_1}_\nu (Y) \leq
\ell(\epsilon) \VD^{\epsilon, \kappa_2}_\nu (Y).
\end{equation}
\end{lem}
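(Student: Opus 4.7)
The plan is to compare $\Lambda^{\epsilon,\kappa_1}$ with $\Lambda^{\epsilon,\kappa_2}$ by transforming $(\epsilon,\kappa_1)$-covers into $(\epsilon,\kappa_2)$-covers and vice versa. Both inequalities in \eqref{eq_lambdaineqkappa} rest on the volume estimates of Lemma~\ref{l:nu-U-compare}(2) together with the algebraic identity $\ell(\epsilon)(L_\nu-k\epsilon)=L_\nu+k(2M+1)\epsilon$, which is the content of \eqref{eq:def-ell}. Once \eqref{eq_lambdaineqkappa} is established, the dimension inequalities \eqref{eq_ineqkappa} will follow by the standard critical-exponent argument, since the prefactors involving $\kappa_1,\kappa_2$ are finite and positive and therefore do not affect whether $\Lambda^{\epsilon,\kappa}_\alpha$ vanishes or diverges.

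For the left inequality, I would start with an $(\epsilon,\kappa_1)$-cover $\{U_i\}$ of $Y$ with $U_i=U(N_i,x_i,\kappa_1,\epsilon)$ and $N_i\geq N^\star$, and set $\tilde U_i\defeq U(N_i,x_i,\kappa_2,\epsilon)$, preserving center and iteration depth but enlarging the parameter. Because $\kappa_2>\kappa_1$, we have $f^{N_i}(\tilde U_i)\supseteq f^{N_i}(U_i)$, and as the relevant inverse branch of $f^{N_i}$ extends to the larger image ball by Lemma~\ref{l:nu-U-compare}, pulling back gives $\tilde U_i\supseteq U_i$; hence $\{\tilde U_i\}$ is an $(\epsilon,\kappa_2)$-cover of $Y$ with the same iteration depths $N_i\geq N^\star$. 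Combining the upper bound on $\volume(\tilde U_i)$ and the lower bound on $\volume(U_i)$ from Lemma~\ref{l:nu-U-compare}(2) with the identity above produces the pointwise estimate $\volume(\tilde U_i)^{\ell(\epsilon)}\leq (\kappa_2^{\ell(\epsilon)}/\kappa_1)^{2k}\volume(U_i)$. Raising to the $\alpha$-th power, summing over $i$, and taking the infimum over $(\epsilon,\kappa_1)$-covers and then the $\limsup$ over $\kappa$ yield the left inequality of \eqref{eq_lambdaineqkappa}.

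For the right inequality, I would start with an $(\epsilon,\kappa_2)$-cover $\{\tilde U_i\}$ with $\tilde U_i=U(\tilde N_i,x_i,\kappa_2,\epsilon)$, and transfer the covering problem to the image scale. Inside the ball $f^{\tilde N_i}(\tilde U_i)=B(f^{\tilde N_i}(x_i),\kappa_2 e^{-\tilde N_i M\epsilon})$, a standard Vitali-type covering argument applied to the family of balls $B(f^{\tilde N_i}(y),\tfrac13\kappa_1 e^{-\tilde N_i M\epsilon})$ with $y\in Y\cap\tilde U_i$ extracts a disjoint subfamily of at most $\theta_{3\kappa_2/\kappa_1}$ elements, indexed by points $y_{i,j}$, such that the tripled balls $B(f^{\tilde N_i}(y_{i,j}),\kappa_1 e^{-\tilde N_i M\epsilon})$ cover $f^{\tilde N_i}(Y\cap\tilde U_i)$. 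Pulling back through the inverse branch of $f^{\tilde N_i}$ that passes through $\tilde U_i$ produces the sets $U(\tilde N_i,y_{i,j},\kappa_1,\epsilon)$, and the crucial point is that for any $y\in Y\cap\tilde U_i$ with $f^{\tilde N_i}(y)$ in one of these balls, the injectivity of $f^{\tilde N_i}$ on $\tilde U_i$ forces $y\in U(\tilde N_i,y_{i,j},\kappa_1,\epsilon)$. Thus $\{U(\tilde N_i,y_{i,j},\kappa_1,\epsilon)\}_{i,j}$ is an $(\epsilon,\kappa_1)$-cover of $Y$, and combining Lemma~\ref{l:nu-U-compare}(2) with the identity defining $\ell(\epsilon)$ delivers pointwise the bound relating $\volume(U(\tilde N_i,y_{i,j},\kappa_1,\epsilon))^\alpha$ to $(\kappa_1^{\ell(\epsilon)}/\kappa_2)^{2k\alpha}\volume(\tilde U_i)^{\alpha/\ell(\epsilon)}$; summing over the at most $\theta_{3\kappa_2/\kappa_1}$ indices $j$ and then over $i$ yields the right inequality.

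Finally, the passage from \eqref{eq_lambdaineqkappa} to \eqref{eq_ineqkappa} is the standard critical-exponent argument: the left inequality shows that $\Lambda^{\epsilon,\kappa_1}_\alpha(Y)<\infty$ implies $\Lambda^{\epsilon,\kappa_2}_{\alpha\ell(\epsilon)}(Y)<\infty$, and letting $\alpha\searrow \VD^{\epsilon,\kappa_1}_\nu(Y)$ gives $\VD^{\epsilon,\kappa_2}_\nu(Y)\leq \ell(\epsilon)\VD^{\epsilon,\kappa_1}_\nu(Y)$, i.e.\ $\ell(\epsilon)^{-1}\VD^{\epsilon,\kappa_2}_\nu(Y)\leq \VD^{\epsilon,\kappa_1}_\nu(Y)$; the right inequality symmetrically gives $\VD^{\epsilon,\kappa_1}_\nu(Y)\leq \ell(\epsilon)\VD^{\epsilon,\kappa_2}_\nu(Y)$, completing \eqref{eq_ineqkappa}. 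The main obstacle is the bookkeeping in the Vitali step of the right inequality: one has to verify simultaneously that the pulled-back sets cover $Y\cap\tilde U_i$ (which rests entirely on injectivity of $f^{\tilde N_i}$ on $\tilde U_i$) and that the $\theta_{3\kappa_2/\kappa_1}$-fold covering multiplicity, combined with the $\kappa$-scaling in the volume ratio and the definition of $\ell(\epsilon)$, collapses into precisely the prefactor and exponent structure displayed in \eqref{eq_lambdaineqkappa}.
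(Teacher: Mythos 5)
Your proposal is correct and follows essentially the same route as the paper's own proof: the left inequality by enlarging each $U(N_i,x_i,\kappa_1,\epsilon)$ to $U(N_i,x_i,\kappa_2,\epsilon)$ while keeping centers and depths, and the right inequality by pushing forward to the image balls $B(f^{\tilde N_i}(x_i),\kappa_2 e^{-\tilde N_i M\epsilon})$, covering each by boundedly many balls of radius $\kappa_1 e^{-\tilde N_i M\epsilon}$ centred on $f^{\tilde N_i}(Y\cap\tilde U_i)$, pulling back by the same inverse branch of $f^{\tilde N_i}$, and invoking Lemma~\ref{l:nu-U-compare}(2) together with the cancellation $(L_\nu-k\epsilon)\ell(\epsilon)=L_\nu+k(2M+1)\epsilon$. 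The one cosmetic difference is that you phrase the bounded-count step as a Vitali extraction plus tripling, whereas the paper simply applies the covering number $\theta$ directly (cover by $\theta$ balls of radius $\kappa_1/3$ times the scale, discard those missing $f^{\tilde N_i}(Y\cap\tilde U_i)$, re-centre and triple the radius), and one small slip: in the left inequality there is no ``$\limsup$ over $\kappa$'' to take, since both $\kappa_1$ and $\kappa_2$ are fixed in \eqref{eq_lambdaineqkappa}; the limit one actually takes is over $N^\star\to\infty$.
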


\begin{proof}
We first show the inequality
 $(\kappa_1/\kappa_2^{\ell(\epsilon)})^{2k}
\Lambda_{\alpha \ell(\epsilon)}^{\epsilon, \kappa_2} (Y)
\leq
\Lambda_{\alpha}^{\epsilon, \kappa_1} (Y)$.
We can assume that 
$\Lambda_\alpha^{\epsilon, \kappa_1} (Y)<\infty$.
Fix $\eta$ such that 
$\Lambda_\alpha^{\epsilon, \kappa_1} (Y)<\eta <\infty$.
There exists
an $(\epsilon,\kappa_1)$-cover $\{U_i\}_{i\ge1}$ of $Y$, with each $U_i$ 
of the form $U_i = U(N_i,x_i,\kappa_1,\epsilon)$, such that
$$\sum_{i \ge 1} \volume(U_i)^{\alpha} < \eta.$$
For each $i \ge 1$, set $V_i \defeq U(N_i,x_i,\kappa_2,\epsilon)$. Since $\kappa_1 < \kappa_2<r(\epsilon)$, 
we have $U_i \subset V_i$ for all $i$,
and the collection $\{V_i\}_{i \ge 1}$ is an $(\epsilon,\kappa_2)$-cover of $Y$. 
Moreover, by Lemma \ref{l:nu-U-compare} (2) and the definition of $\ell(\epsilon)$,
for every $i$
we have
$$\frac{\volume(V_i)^{\ell(\epsilon)}}{\volume(U_i)} 
\le \frac{\kappa_2^{2k\ell(\epsilon)}}{\kappa_1^{2k}}
 \frac{e^{-2N_i(L_\nu-k\epsilon)\ell(\epsilon)}}{e^{-2N_i(L_\nu+k(2M+1)\epsilon)}} =
\left(
\kappa_2^{\ell(\epsilon)}/\kappa_1
\right)^{2k}.$$
It follows that 
$$\sum_{i \ge 1} \volume(V_i)^{\alpha \ell(\epsilon)} <  
\left(
\kappa_2^{\ell(\epsilon)} / \kappa_1
\right)^{2k\alpha} \eta.$$
By the choice of $\eta$, this shows that $\Lambda^{\epsilon, \kappa_2}_{\alpha\ell(\epsilon)}(Y) \leq (\kappa_2^{\ell(\epsilon)}/\kappa_1)^{2k\alpha} \Lambda_{\alpha}^{\epsilon, \kappa_1} (Y)$, as desired.
By the definition of $\VD_\nu^{\epsilon, \kappa}(Y)$,
 this also shows the first inequality in (\ref{eq_ineqkappa}).

\medskip

We now show the second inequality in \eqref{eq_lambdaineqkappa}. As above, this also shows the second inequality in \eqref{eq_ineqkappa}.
We can assume that $\Lambda_\alpha^{\epsilon, \kappa_2} (Y) <\infty$.
Fix $\eta$ such that $\Lambda_\alpha^{\epsilon, \kappa_2} (Y)<\eta <\infty$.
There exists an $(\epsilon,\kappa_2)$-cover $\{U_i\}_{i \ge 1}$ of $Y$, 
with each $U_i$ of the form $U_i = U(N_i, x_i, \kappa_2, \epsilon)$,
 such that 
$$\sum_{i \ge 1} \volume(U_i)^\alpha < \eta.$$
By Definition \ref{d:U}, for each $i \ge 1$, we have 
$A_i \defeq f^{N_i}(U_i) = B(f^{N_i}(x_i), \kappa_2 \, e ^{-N_iM\epsilon})$. 
By the definition of $\theta_{3\kappa_2/\kappa_1}$,
one can cover any ball $A_i$ with $\theta_{3\kappa_2/\kappa_1}$ 
open balls of radius $(\kappa_1/3) e^{-N_i M\epsilon}$.
In particular, these balls cover $f^{N_i} (Y\cap U_i)\subseteq A_i$.
Up to removing from the collection the balls not intersecting
$f^{N_i} (Y\cap U_i$) and replacing all the other balls 
with balls of the same center and radius $\kappa_1 e^{-N_i M\epsilon}$,
we see that we can cover
$f^{N_i} (Y\cap U_i)$ with $\theta_{3\kappa_2/\kappa_1}$ 
balls of radius $\kappa_1 e^{-N_i M \epsilon}$ and centred at
 points of $f^{N_i} (Y\cap U_i)$.
 For every $i$, we denote by $\{B_{i,j}\}_{1\leq j\leq J_i}$
 (for some $1\leq J_i \leq \theta_{3\kappa_2/\kappa_1}$),
 the collection of the balls of radius $\kappa_1 \, e^{-N_i M \epsilon}$
  constructed above. By construction, for every $i$ we
   have
$$f^{N_i} (Y\cap U_i) 
\subseteq \bigcup_{j=1}^{J_i} B_{i,j}.$$
For every $i,j$, set $V_{i,j}\defeq g_i (B_{i,j})$, where 
$g_i$ is the inverse branch of $f^{N_i}$ defined in a neighbourhood of $f^{N_i} (z_i)$
 that sends
 $f^{N_i} (z_i)$ to $z_i$. 
It follows that, for each $i$,
the collection 
$\{V_{i,j}\}_{1\leq j\leq J_i}$
is an $(\epsilon,\kappa_1)$-cover of $U_i\cap Y$. By Lemma \ref{l:nu-U-compare} (2) and the definition of $\ell(\epsilon)$,
for every $i$ and $1\leq j\leq J_i$
we have
$$\frac{\volume(V_{i,j})^{\ell(\epsilon)}}{\volume(U_i)} \le \frac{\kappa_1^{2k\ell(\epsilon)}}{\kappa_2^{2k}} \frac{e^{-2N_i(L_\nu-k\epsilon)\ell(\epsilon)}}{e^{-2N_i(L_\nu+k(2M+1)\epsilon)}} =
\left(
{\kappa_1^{\ell(\epsilon)}}/{\kappa_2}
\right)^{2k}.$$
Summing over $i$ and $j$, we obtain
\begin{align*}
\sum_{i\ge 1}\sum_{j=1}^{J_i} \volume (V_{i,j})^{\alpha \ell(\epsilon)} &\le \sum_{i \ge 1} \left(\theta_{3\kappa_2/\kappa_1} 
\left (
{\kappa_1^{\ell(\epsilon)}}/{\kappa_2}
\right)^{2k\alpha} \volume(U_i)^\alpha \right)\\
& \le \theta_{3\kappa_2/\kappa_1} \left (
{\kappa_1^{\ell(\epsilon)}}/{\kappa_2}
\right)^{2k\alpha} \eta.
\end{align*}
It follows that $\Lambda^{\epsilon, \kappa_1}_{\alpha\ell(\epsilon)}(Y) \leq \theta_{3\kappa_2/\kappa_1} (\kappa_1^{\ell(\epsilon)}/\kappa_2)^{2k\alpha} \Lambda_{\alpha}^{\epsilon, \kappa_2} (Y)$, as desired.
This completes the proof.
\end{proof}

\begin{lem}\label{lem:vdek-vde}
For every $0<\epsilon \ll \chi_{\min}$, $0<\kappa <r(\epsilon)/3$,
and $Y\subseteq \pi (Z^\star_\nu (\epsilon))$, we have
\[
|\VD^{\epsilon, \kappa}_\nu  (Y)- \VD^\epsilon_\nu (Y)|
\leq 
 (\ell(\epsilon)-1)
\min 
\big\{\VD^{\epsilon}_\nu (Y), \VD^{\epsilon, \kappa}_\nu (Y) \big\}
\leq
 \ell(\epsilon)-1,\]
where $\ell(\epsilon)>1$ is as in \eqref{eq:def-ell}.
\end{lem}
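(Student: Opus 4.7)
The plan is to first upgrade the inequalities \eqref{eq_ineqkappa} of Lemma \ref{lem_indepkappa}, which compare $\VD^{\epsilon,\kappa_1}_\nu(Y)$ and $\VD^{\epsilon,\kappa_2}_\nu(Y)$ at two different scales, to inequalities that directly compare $\VD^{\epsilon,\kappa}_\nu(Y)$ with $\VD^\epsilon_\nu(Y)$. The main (small) obstacle is that $\VD^\epsilon_\nu(Y)$ is defined from $\Lambda^\epsilon_\alpha(Y) = \limsup_{\kappa\to 0}\Lambda^{\epsilon,\kappa}_\alpha(Y)$ rather than from $\VD^{\epsilon,\kappa}_\nu(Y)$ directly, so the equivalence between these two viewpoints must be extracted by using the threshold structure of $\alpha \mapsto \Lambda^{\epsilon,\kappa}_\alpha(Y)$ given by Lemma \ref{l:non-increasing}.

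Concretely, I would fix $\kappa_2 \in (0, r(\epsilon)/3)$ and argue as follows. For any $\alpha > \ell(\epsilon)\VD^{\epsilon,\kappa_2}_\nu(Y)$, the second inequality in \eqref{eq_ineqkappa} gives $\alpha > \VD^{\epsilon,\kappa_1}_\nu(Y)$ for every $\kappa_1 \in (0,\kappa_2)$, so by Lemma \ref{l:non-increasing}, $\Lambda^{\epsilon,\kappa_1}_\alpha(Y)=0$ for all such $\kappa_1$. Taking the $\limsup$ as $\kappa_1 \to 0$ gives $\Lambda^\epsilon_\alpha(Y)=0$, which yields
\[
\VD^\epsilon_\nu(Y) \le \ell(\epsilon)\VD^{\epsilon,\kappa_2}_\nu(Y).
\]
By the symmetric argument, starting from the first inequality in \eqref{eq_ineqkappa} and testing $\alpha < \ell(\epsilon)^{-1}\VD^{\epsilon,\kappa_2}_\nu(Y)$, every such $\alpha$ satisfies $\Lambda^{\epsilon,\kappa_1}_\alpha(Y)=\infty$ on the whole interval $\kappa_1 \in (0,\kappa_2)$, forcing $\Lambda^\epsilon_\alpha(Y)=\infty$ and hence
\[
\VD^\epsilon_\nu(Y) \ge \ell(\epsilon)^{-1}\VD^{\epsilon,\kappa_2}_\nu(Y).
\]

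Applying these two bounds with $\kappa_2 = \kappa$ (which is admissible thanks to the hypothesis $\kappa < r(\epsilon)/3$) yields the two-sided estimate
\[
\ell(\epsilon)^{-1}\VD^{\epsilon,\kappa}_\nu(Y) \le \VD^\epsilon_\nu(Y) \le \ell(\epsilon)\VD^{\epsilon,\kappa}_\nu(Y),
\]
which after rearrangement gives both $\VD^\epsilon_\nu(Y)-\VD^{\epsilon,\kappa}_\nu(Y) \le (\ell(\epsilon)-1)\VD^{\epsilon,\kappa}_\nu(Y)$ and $\VD^{\epsilon,\kappa}_\nu(Y)-\VD^\epsilon_\nu(Y) \le (\ell(\epsilon)-1)\VD^\epsilon_\nu(Y)$. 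Combining these is precisely the first inequality in the statement. The second inequality is then immediate from Lemma \ref{lem_vdbdd-epsilon}, which guarantees $\VD^{\epsilon,\kappa}_\nu(Y), \VD^\epsilon_\nu(Y) \le 1$.
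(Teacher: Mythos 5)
Your proof is correct and takes essentially the same approach as the paper: both arguments rest on the two-sided comparison \eqref{eq_ineqkappa} from Lemma~\ref{lem_indepkappa} and the threshold behaviour in Lemma~\ref{l:non-increasing} to pass from the $\kappa$-level quantities to $\VD^\epsilon_\nu(Y)$ via the $\limsup$ in \eqref{eq:def-lambda}. The paper just packages the uniformity over $\kappa$ by introducing a single $\alpha_0=\alpha_0(\epsilon,Y)$ with $\alpha_0\le\VD^{\epsilon,\kappa}_\nu(Y)\le\alpha_0\ell(\epsilon)$ for all admissible $\kappa$ and then sandwiches $\VD^\epsilon_\nu(Y)$ in the same interval, whereas you argue directly with test exponents $\alpha$ on either side of $\ell(\epsilon)^{\pm1}\VD^{\epsilon,\kappa}_\nu(Y)$; the two formulations are equivalent.
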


\begin{proof}
Fix $0<\epsilon\ll \chi_{\min}$ and $Y\subseteq \pi (Z^\star_\nu (\epsilon))$.
It follows from Lemma \ref{lem_indepkappa} that there exists $\alpha_0=\alpha_0(\epsilon, Y) \in [0,+\infty]$ such that
\begin{equation}\label{eq:bound-vdek}
\alpha_0  \leq \VD^{\epsilon, \kappa}_\nu (Y)\leq \alpha_0 \ell (\epsilon) \quad \mbox{ for every } 0<\kappa<r(\epsilon).
\end{equation}
Take any $\eta>0$.
It follows from the definition
\eqref{eq:def-lambda}
of $\Lambda_\alpha^{\epsilon}(Y)$ and $\Lambda_{\alpha}^{\epsilon, \kappa} (Y)$
that 
 $\Lambda_{\alpha_0 -\eta}^{\epsilon} (Y)=+\infty$
 (assuming $\alpha_0>0$ and $0<\eta<\alpha_0$)
  and
  $\Lambda_{\alpha_0 \ell(\epsilon)+\eta}^{\epsilon} (Y)=0$
  (assuming $\alpha_0<\infty$). Since $\eta$ is arbitrary,
  we deduce from the definition of $\VD^\epsilon_\nu (Y)$ that
  \begin{equation}\label{eq:bound-vde}
  \alpha_0 \leq \VD^{\epsilon}_\nu (Y) \leq \alpha_0 \ell (\epsilon).
  \end{equation}
  The first inequality in the statement 
  follows from \eqref{eq:bound-vdek}
  and \eqref{eq:bound-vde}. 
  The second one follows from
  Lemma \ref{lem_vdbdd-epsilon}.
\end{proof}

\begin{rmk} \label{rmk_center}
In Definition \ref{d:epsilon-cover}, we do not require $z(U_i)\in Y$ for any of the $U_i$ in an $\epsilon$-cover of $Y$, but we could also 
	define 
	$\VD_{\nu}^{\epsilon}(Y)$
	(and $\VD_{\nu}^{\epsilon, \kappa} (Y)$)
	for $Y \subseteq \pi(Z^\star_\nu (\epsilon))$ by only using 
	sets $U_i$ 
	such that 
	$z(U_i) \in Y$, rather than $z(U_i) \in \pi(Z^\star_\nu (\epsilon))$,
	in the definition of $\Lambda^{\epsilon,\kappa}_\alpha (Y)$.
	Denoting by $\overline{ \Lambda}^\epsilon_\alpha (Y)$ and $\overline{\VD}^\epsilon_\nu(Y)$
	the corresponding quantities, it is straightforward to see that
	$\Lambda^\epsilon_\alpha (Y)\leq \overline{\Lambda}^\epsilon_\alpha (Y)$
	for all $\alpha \geq 0$. Hence, we have 
	$\VD_\nu^\epsilon (Y)\leq \overline{\VD}_\nu^\epsilon(Y)$. 	
	On the other hand, take $\alpha$ such that 
	$\Lambda^\epsilon_\alpha (Y)<\infty$ and
	consider an $(\epsilon, \kappa)$-cover
	 $\{U_i\}_{i\geq 1}$
	of $Y$
	for which the value of $\sum_{i\geq 1} \volume (U_i)^\alpha$ 
	is close to the
	value of $\Lambda^\epsilon_\alpha (Y)$. 
	By the definition of $\Lambda^\epsilon_\alpha (Y)$,
	we can assume that, for all $i$, we have
	$U_i\in  \mathcal U (\epsilon,  \kappa, N_0)$
	for some $ \kappa < r(\epsilon)/3$ and some $N_0\geq n(\epsilon)$.
	Take $i_0$ such that 
	$z(U_{i_0}) \notin Y$. Observe that there must exist 
	$y \in U_{i_0} \cap Y$ and that, since $\kappa< r(\epsilon)/3$, the set
	$U(N(U_{i_0}), y, 3\kappa,\epsilon)$
	is well-defined and contains $U_{i_0}$.
	By similar arguments as in the proof of Lemma \ref{lem_indepkappa},
	this shows that
	$\overline{\Lambda}^{\epsilon,3\kappa}_{\alpha/\ell(\epsilon)}(Y) 
	\leq 3^{2k \alpha/\ell(\epsilon)} \Lambda^{\epsilon,\kappa}_\alpha (Y)$
	for all $0<\kappa<r(\epsilon)/3$,
	which gives $\overline{\VD}_\nu^\epsilon(Y) \le \ell(\epsilon) \VD_\nu^\epsilon (Y)$.
	Similar arguments
	and estimates
	hold for $\VD^{\epsilon, \kappa}_\nu(Y)$.
\end{rmk}

Take now $X\subseteq \pi(Z_\nu^\star)$, and recall that $\nu(\pi (Z_\nu^\star))=1$. For every $0<\epsilon\ll \chi_{\min}$, we set 
\begin{equation}
\label{e:Xepsilon}
X^\epsilon \defeq X\cap \pi(Z^\star_\nu(\epsilon)).
\end{equation}
Observe that,
since 
$Z_\nu^\star =\cup_{0<\epsilon\ll \chi_{\min}} Z^\star_\nu (\epsilon)$, 
we have $\cup_{0<\epsilon\ll \chi_{\min}}X^\epsilon = X$. 

\begin{defn}\label{d:vd}
For every $X\subseteq \pi(Z^\star_\nu)$, the {\it volume dimension} $\VD_\nu (X)$ of $X$ is
	 \[
\VD_\nu (X) \defeq
\limsup_{\epsilon \to 0} \VD_\nu^\epsilon (X^\epsilon).	 
	 \]
\end{defn}

\begin{rmk}
The limsup in Definition \ref{d:vd} is actually a limit; see Section \ref{subsec_eqdefnl},
and in particular Corollary \ref{cor_limit},
and Remark \ref{rmk_5.1}.
\end{rmk}

\begin{lem} \label{lem_monotone}
For every $Y_1 \subseteq Y_2 \subseteq \pi(Z_\nu^\star)$,
 we have 
$\VD_{\nu}(Y_1) \le \VD_{\nu}(Y_2)$. 
\end{lem}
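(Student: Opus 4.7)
The plan is to reduce the claim directly to the slice-wise monotonicity already established in Lemma \ref{l:increasing-epsilon}. The proof will essentially be an unpacking of the Definition \ref{d:vd} of $\VD_\nu$, followed by a passage to the limit.

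First I would fix $0<\epsilon\ll\chi_{\min}$ and consider the slices $Y_1^\epsilon = Y_1\cap \pi(Z_\nu^\star(\epsilon))$ and $Y_2^\epsilon = Y_2\cap \pi(Z_\nu^\star(\epsilon))$ as in \eqref{e:Xepsilon}. Since $Y_1\subseteq Y_2$, intersecting both with the same set $\pi(Z_\nu^\star(\epsilon))$ preserves the inclusion, so $Y_1^\epsilon \subseteq Y_2^\epsilon \subseteq \pi(Z_\nu^\star(\epsilon))$. Lemma \ref{l:increasing-epsilon} then immediately gives
\[
\VD_\nu^\epsilon(Y_1^\epsilon) \le \VD_\nu^\epsilon(Y_2^\epsilon)
\]
for every such $\epsilon$.

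Finally, taking $\limsup_{\epsilon\to 0}$ on both sides and invoking Definition \ref{d:vd} yields $\VD_\nu(Y_1)\le\VD_\nu(Y_2)$, as desired. There is no genuine obstacle here: the only point worth noting is that one must intersect both sets with the \emph{same} exhausting family $\{\pi(Z_\nu^\star(\epsilon))\}_\epsilon$ before comparing $\VD_\nu^\epsilon$'s, which is automatic from the definition of $X^\epsilon$.
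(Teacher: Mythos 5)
Your proof is correct and follows exactly the paper's own argument: intersect with $\pi(Z_\nu^\star(\epsilon))$, apply the slice-wise monotonicity of Lemma \ref{l:increasing-epsilon}, and pass to the $\limsup_{\epsilon\to 0}$ via Definition \ref{d:vd}. Nothing to add.
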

\begin{proof}
By Lemma \ref{l:increasing-epsilon},
for any $0<\epsilon\ll \chi_{\min}$ 
we have
 $\VD_\nu^\epsilon (Y^\epsilon_1) \le \VD_\nu^\epsilon (Y^\epsilon_2)$.
The conclusion follows from Definition \ref{d:vd}. 
\end{proof}

\begin{defn}\label{d:vd-nu}
Take $\nu \in \mathcal{M}^+(f)$.
The {\it volume dimension}  $\VD(\nu)$
of $\nu$ is
$$\VD(\nu) \defeq \inf \big\{\VD_{\nu}(X) : X \subseteq \pi(Z_\nu^\star), \nu(X)=1 \big\}.$$
\end{defn}

\begin{lem}\label{lem_vdbdd}
	For every
	 $\nu \in \mathcal{M}^+(f)$ and
	  $X \subseteq \pi(Z_\nu^\star)$, we have $\VD_{\nu}(X) \le 1$. In particular,
	we have $\VD(\nu) \le 1$ for every
	 $\nu \in \mathcal{M}^+(f)$.
\end{lem}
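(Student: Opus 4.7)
The plan is to deduce this directly from Lemma \ref{lem_vdbdd-epsilon} together with the definitions of $\VD_\nu(X)$ and $\VD(\nu)$. No substantive new work is needed; the content is in observing that everything reduces to the $\epsilon$-level bound already established.

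For the first assertion, take $X \subseteq \pi(Z_\nu^\star)$ and fix $0 < \epsilon \ll \chi_{\min}$. Setting $X^\epsilon \defeq X \cap \pi(Z_\nu^\star(\epsilon))$ as in \eqref{e:Xepsilon}, we have $X^\epsilon \subseteq \pi(Z_\nu^\star(\epsilon))$, so Lemma \ref{lem_vdbdd-epsilon} applied to $Y = X^\epsilon$ gives $\VD_\nu^\epsilon(X^\epsilon) \le 1$. Taking the $\limsup$ as $\epsilon \to 0$ and invoking Definition \ref{d:vd}, we conclude $\VD_\nu(X) \le 1$.

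For the second assertion, recall that $\hat\nu(Z_\nu^\star) = 1$ and hence $\nu(\pi(Z_\nu^\star)) = 1$, so $X = \pi(Z_\nu^\star)$ is an admissible choice in the infimum defining $\VD(\nu)$ in Definition \ref{d:vd-nu}. Applying the first assertion to this $X$ gives $\VD_\nu(\pi(Z_\nu^\star)) \le 1$, and taking the infimum over all full-measure subsets yields $\VD(\nu) \le \VD_\nu(\pi(Z_\nu^\star)) \le 1$, as claimed.

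The only step that is not purely formal is confirming that the hypothesis of Lemma \ref{lem_vdbdd-epsilon} applies, namely that $X^\epsilon \subseteq \pi(Z_\nu^\star(\epsilon))$; this is immediate from the definition \eqref{e:Xepsilon} of $X^\epsilon$. There is no real obstacle here: the lemma is simply the global version of the bound $\VD_\nu^\epsilon \le 1$ established via the Besicovitch covering argument in Lemma \ref{lem_vdbdd-epsilon}.
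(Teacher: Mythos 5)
Your proof is correct and takes essentially the same approach as the paper, which also deduces the lemma immediately from Lemma \ref{lem_vdbdd-epsilon} together with Definitions \ref{d:vd} and \ref{d:vd-nu}. You have simply spelled out the routine steps the paper leaves to the reader.
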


\begin{proof}
The statement is an immediate consequence of Lemma \ref{lem_vdbdd-epsilon} 
and Definitions \ref{d:vd} and \ref{d:vd-nu}.
\end{proof}

When $X\subseteq \mathbb P^k$ is a uniformly expanding closed invariant set
 for $f$,
 by Remark \ref{rmk:s4-hyperbolic}
we can assume that $\pi(Z^\star_\nu(\epsilon)) = \pi(Z^\star_\nu)=X$  for every
invariant measure $\nu$
on $X$ and every $0<\epsilon\ll \chi_{\min}$.
In particular, the following definition is well-posed
and
defines the term $\VD(J(f))$ in Theorem \ref{thm_main_equalities_hyp}. 
\begin{defn}\label{d:vd-hyp}
	If $X \subseteq \bbP^k$ is uniformly expanding, the {\it volume dimension} of $X$
	is
	$$\VD(X) \defeq \sup_{\nu \in \mathcal{M}_X^+(f)} \VD_{\nu}(X).$$
\end{defn}

We conclude this section with
the next proposition, which in particular shows that, when $k=1$, the volume dimension associated to any $\nu \in \mathcal M^+ (f)$ is equivalent to the Hausdorff dimension.

\begin{prop} \label{rmk_VD}
If $\nu\in\mathcal M^+ (f)$
is such that all the Lyapunov exponents of $\nu$ are equal to $\chi>0$,
 then 
 \begin{enumerate}
 \item $2k\VD_\nu(X)= \HD(X)$ for all $X\subseteq \pi(Z^\star_\nu)$;
 \item
 $2k\VD(\nu) = \HD(\nu)$,
 \end{enumerate}
 where $\HD(X)$ and $\HD(\nu)$ denote the Hausdorff dimension of $X$ and $\nu$, respectively.
\end{prop}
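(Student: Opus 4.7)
The plan rests on the observation that, by Corollary \ref{c:nu-U-compare-equal}, when all Lyapunov exponents equal $\chi > 0$, each set $U(N, x, \kappa, \epsilon)$ is sandwiched between concentric balls of radii $\kappa e^{-N(\chi + (2M+1)\epsilon)}$ and $\kappa e^{-N(\chi - \epsilon)}$, with volume $\kappa^{2k} e^{-2kN\chi \pm O(kN\epsilon)}$. Thus, up to multiplicative errors of size $e^{O(N\epsilon)}$, each $U$ behaves like a Euclidean ball of radius $r \sim \kappa e^{-N\chi}$ satisfying $\volume(U) \sim \diam(U)^{2k}$. Converting between the Hausdorff exponent $s$ in $\sum \diam(A_i)^s$ and the volume-dimension exponent $\alpha$ in $\sum \volume(U_i)^\alpha$ therefore amounts to the substitution $s = 2k\alpha(1+O(\epsilon))$.

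For Part (1), I plan to establish the sandwich
\[
\beta(\epsilon)\, \HD(X^\epsilon) \;\leq\; 2k\,\VD^\epsilon_\nu(X^\epsilon) \;\leq\; \HD(X^\epsilon)/\beta(\epsilon)
\]
for every $X \subseteq \pi(Z^\star_\nu)$ and every $0 < \epsilon \ll \chi$, where $\beta(\epsilon) \defeq (\chi - \epsilon)/(\chi + (2M+1)\epsilon) \to 1$ as $\epsilon \to 0$ (the further $O(\ell(\epsilon)-1) = O(\epsilon)$ loss when passing from $\VD^{\epsilon,\kappa}_\nu$ to $\VD^\epsilon_\nu$ via Lemma \ref{lem:vdek-vde} is absorbed in the same way). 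For the upper bound, I start with a near-optimal Hausdorff cover of $X^\epsilon$ by balls $B(y_i, r_i)$ with centres $y_i \in X^\epsilon$ (as is standard for Hausdorff dimension) and, applying Corollary \ref{c:nu-U-compare-equal}(1), enlarge each ball to a set $U_i \defeq U(N_i, y_i, \kappa, \epsilon) \supseteq B(y_i, r_i)$ by choosing $N_i$ of order $\log(\kappa/r_i)/(\chi + (2M+1)\epsilon)$; part (2) then yields $\volume(U_i) \leq C\, r_i^{2k\beta(\epsilon)}$ for a constant $C$ depending on $\kappa$, so setting $\alpha = s/(2k\beta(\epsilon))$ transfers the finiteness of the Hausdorff sum $\sum r_i^s$ (for any $s > \HD(X^\epsilon)$) to a finite volume-dimension sum. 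For the lower bound, starting from an $(\epsilon,\kappa)$-cover $\{U_i\}$ of $X^\epsilon$ with small $\sum \volume(U_i)^\alpha$ for $\alpha > \VD^\epsilon_\nu(X^\epsilon)$, I use part (1) to bound $\diam(U_i) \leq 2\kappa e^{-N_i(\chi - \epsilon)}$ and part (2) to bound $\volume(U_i)$ from below, yielding $\sum \diam(U_i)^s$ small with $s = 2k\alpha/\beta(\epsilon)$. Since $X \subseteq \pi(Z^\star_\nu) = \bigcup_\epsilon \pi(Z^\star_\nu(\epsilon))$ gives $X = \bigcup_\epsilon X^\epsilon$, countable stability of the Hausdorff dimension implies $\HD(X) = \sup_\epsilon \HD(X^\epsilon)$, and combining this with the sandwich (together with the coincidence of $\limsup$ and supremum as $\epsilon \to 0$ established in Section \ref{subsec_eqdefnl}) gives $2k\,\VD_\nu(X) = \HD(X)$.

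Part (2) then follows formally from Part (1). Since $\nu(\pi(Z^\star_\nu)) = 1$, replacing any full $\nu$-measure set $X$ by $X \cap \pi(Z^\star_\nu)$ preserves $\nu(X) = 1$ without increasing $\HD(X)$, so
\[
\HD(\nu) = \inf\{\HD(X) \colon X \subseteq \pi(Z^\star_\nu),\ \nu(X) = 1\} = 2k \inf\{\VD_\nu(X) \colon X \subseteq \pi(Z^\star_\nu),\ \nu(X) = 1\} = 2k\,\VD(\nu),
\]
where the middle equality uses Part (1). The main obstacle is the careful bookkeeping of the $\epsilon$- and $\kappa$-dependencies: one must verify that the converted covers satisfy all the constraints of Definition \ref{d:epsilon-cover} (centres in $\pi(Z^\star_\nu(\epsilon))$, depths $N_i \geq n(\epsilon)$, and diameters shrinking as $N^\star \to \infty$), and that $\beta(\epsilon) \to 1$ indeed closes the gap in the limit $\epsilon \to 0$.
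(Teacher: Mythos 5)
Your proposal is correct and follows essentially the same route as the paper: both exploit Corollary \ref{c:nu-U-compare-equal} to sandwich each set $U(N,x,\kappa,\epsilon)$ between concentric balls and then establish the two-sided comparison $\ell(\epsilon)^{-1}\HD(X^\epsilon) \le 2k\,\VD^\epsilon_\nu(X^\epsilon) \le \ell(\epsilon)\HD(X^\epsilon)$ (your $\beta(\epsilon)$ is exactly $\ell(\epsilon)^{-1}$), converting in one direction from an $(\epsilon,\kappa)$-cover to a cover by balls of radius $\diam(U_i)$, and in the other from a Hausdorff cover by balls centred in $X^\epsilon$ to an $(\epsilon,\kappa)$-cover with $N_i \sim \log(\kappa/r_i)/(\chi+(2M+1)\epsilon)$. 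The only cosmetic difference is that the paper works directly with $\Lambda^{\epsilon,\kappa}_\alpha$ inside the $\limsup$ over $\kappa$ rather than invoking Lemma \ref{lem:vdek-vde}, but this does not change the argument.
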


\begin{proof}
Recall that the Hausdorff dimension of $X \subseteq \mathbb P^k$
is defined as
\[
\HD(X)
\defeq \inf \big\{\alpha \colon H_\alpha (X)=0\big\},
\quad \mbox{ where }
H_\alpha (X)\defeq \sup_{\delta > 0} \inf_{\{B_i\}} \sum_{i} (\diam B_i)^\alpha.
\]
The 
infimum in the second expression
is taken over 
all
countable
covers of $X$
by open balls $\{B_i\}$
whose diameter is less than $\delta$. The Hausdorff dimension 
$\HD(\nu)$
of $\nu$ is defined as
\begin{equation}\label{eq:def-HD-nu}
\HD(\nu) \defeq \inf
\big\{\HD(X) \colon X\subseteq \text{Supp } \nu, \, \nu(X)=1\big\}.
\end{equation}

\medskip

In order to prove the first assertion, it is enough to show that
\[
\ell(\epsilon)^{-1} 
\HD (X^\epsilon)\leq
2k\VD_\nu^\epsilon  (X^\epsilon) 
\leq \ell(\epsilon) \HD (X^\epsilon)
\quad \mbox{ for all } X\subseteq \pi (Z_\nu^\star) \text{ and } 0 < \epsilon \ll \chi_{\min},
\]
where we recall that $X^\epsilon$ is defined as in \eqref{e:Xepsilon}
and the constant $\ell(\epsilon)$ is defined in \eqref{eq:def-ell}. Observe that, as $L_\nu = k\chi$, 
we have 
$\ell (\epsilon) = \big(\chi+(2M+1)\epsilon\big)/(\chi-\epsilon)$.

\medskip

We first prove  the inequality $\HD(X^\epsilon) \le 2k \ell(\epsilon) \VD_\nu^\epsilon (X^\epsilon)$. 
Fix  $\alpha_1 > \VD_\nu^\epsilon (X^\epsilon)$.
By Lemma \ref{l:non-increasing}, we have
 $\Lambda^\epsilon_{\alpha_1}(X^\epsilon) = \limsup_{ \kappa \to 0}\Lambda^{\epsilon,\kappa}_{\alpha_1}(X^\epsilon) = 0$.
 Therefore, for any $\eta>0$ and
 up to taking $0<\kappa < r(\epsilon)$ sufficiently small,
 there exists an $(\epsilon,\kappa)$-cover $\{U_i\}_{i \ge 1}$ of $X^\epsilon$
 such that
\begin{equation}\label{eq:sum-HDVD}
\sum_{i \ge 1} \volume(U_i)^{\alpha_1} < \eta.
\end{equation}
Setting $N_i \defeq N (U_i)$,
by Corollary \ref{c:nu-U-compare-equal} (1) and (2)
 we have
\[
\frac{\text{diam}(U_i)^{2k\ell(\epsilon)}}{\volume(U_i)}
\le \frac{2^{2k\ell(\epsilon)}\kappa^{2k\ell(\epsilon)}
e^{-2k N_i
(\chi-\epsilon)
\ell(\epsilon)}}{\kappa^{2k}
e^{-2k N_i(\chi+(2M+1)\epsilon)}}\\
\le 2^{2k\ell(\epsilon)}\kappa^{2k(\ell(\epsilon)-1)}
\leq2^{2k\ell(\epsilon)},
\]
where in the last step
we used the fact that $\kappa < r(\epsilon)<1$.

\medskip

For each $i$, 
define the ball
 $V_i \defeq B(z(U_i), \text{diam}(U_i))$
 of center $z(U_i)$ and radius $\text{diam}(U_i)$.
 Then, $U_i \subseteq V_i$ and $\{V_i\}_{i\geq 1}$ 
 is a cover of $X^\epsilon$ by balls. By the above estimates and \eqref{eq:sum-HDVD},
 we have
\[
\sum_{i \geq 1} \diam(V_i)^{2k\ell(\epsilon)\alpha_1} 
= \sum_{i \geq 1}
(2 \diam(U_i))^{2k\ell(\epsilon)\alpha_1}
\leq 
2^{4k\ell(\epsilon)
\alpha_1}
\sum_{i \geq 1} \volume(U_i)^{\alpha_1} < 
2^{4k\ell(\epsilon)
\alpha_1}\eta.
\]
Therefore, we have $\HD(X^\epsilon) \le 2k\ell(\epsilon)\alpha_1$
and the conclusion follows by taking $\alpha_1 \searrow \VD_\nu^\epsilon (X^\epsilon)$.

\medskip

We now prove the inequality
$2k\ell(\epsilon)^{-1} \VD_\nu^\epsilon (X^\epsilon) \le \HD(X^\epsilon)$.
Fix $\alpha_0$ such that $H_{\alpha_0}(X^\epsilon) =0$.
Then, for any $\eta > 0$, there exists a cover $\{B_i = B(x_i,r_i)\}_{i\geq 1 }$ 
of $X^\epsilon$ consisting of open balls such that 
\begin{equation}\label{eq:hdvd-sum2}
\sum_{i \ge 1} (2r_i)^{\alpha_0} < \eta.
\end{equation}
Fix any $0< \kappa < r(\epsilon)$. By definition of $H_{\alpha_0} (X^\epsilon)$
we can assume that 
\begin{equation}\label{eq:sup-ri}
\sup_i r_i < \kappa \, e^{-n (\epsilon) (\chi + (2M+1) \epsilon)}.
\end{equation}
For each $i \ge 1$, set
\[
N_i \defeq
\big\lfloor 
\frac{\log \kappa-\log r_i}{\chi + (2M+1)\epsilon}\big\rfloor
\]
and $U_i \defeq U(N_i, x_i, \kappa, \epsilon)$.
Observe that $N_i\geq n(\epsilon)$ for all $i$ by \eqref{eq:sup-ri},
hence every $U_i$ is well-defined by Lemma \ref{l:nu-U-compare}.
By Corollary \ref{c:nu-U-compare-equal} (1), for every $i$ we also have
$$B(x_i, r_i) \subseteq B(x_i, \kappa \, e^{-N_i(\chi+(2M+1)\epsilon)}) 
\subseteq 
U_i \subseteq B(x_i, \kappa \, e^{-N_i (\chi - \epsilon)}).$$
In particular, the collection $\{U_i\}_{i\geq 1}$ is an $(\epsilon, \kappa)$-cover of $X^\epsilon$
 and, for all $i$, we also have
\begin{equation}\label{eq:diam-hd}
\volume (U_i)^{\ell(\epsilon)/(2k)} \leq  \kappa^{\ell(\epsilon)}e^{-N_i(\chi+(2M+1)\epsilon)}
 \le  \kappa^{\ell(\epsilon)}e^{\left(1+\frac{\log r_i - \log \kappa}{\chi + (2M+1)\epsilon}\right)(\chi+(2M+1)\epsilon)}
\le
e^{\chi+(2M+1)\epsilon} r_i,
\end{equation}
where we used the facts that $\lfloor x\rfloor \geq x-1$ for every $x>0$
and that $\kappa<r(\epsilon)<1$.

\medskip

It follows from
\eqref{eq:hdvd-sum2}  and
\eqref{eq:diam-hd} that
\[
\sum_{i\geq 1} \volume(U_i)^{\alpha_0 \ell(\epsilon)/(2k)} 
  \leq \sum_{i \ge 1} \left(\frac{1}{2}
  e^{\chi+(2M+1)\epsilon}\right)^{\alpha_0} (2r_i)^{\alpha_0} 
 <  \left(\frac{1}{2}
e^{\chi+(2M+1)\epsilon}\right)^{\alpha_0} \eta.
\]
Therefore, 
for every $0< \kappa<r(\epsilon)$,
we have
\begin{equation}
\label{eq:final-hdvd2}
\Lambda^{\epsilon,\kappa}_{\alpha_0\ell(\epsilon)/(2k)} (X^\epsilon)< \left(\frac{1}{2}
e^{\chi+(2M+1)\epsilon}\right)^{\alpha_0} \eta
<\left(\frac{1}{2}e^{\chi+(2M+1)\epsilon}\right)^{\alpha_0} \eta.
\end{equation}
 Taking
the limsup over $\kappa$ in the left hand side of
\eqref{eq:final-hdvd2},
by 
\eqref{eq:def-lambda} 
we obtain
$\Lambda^{\epsilon}_{\alpha_0\ell(\epsilon)/(2k)} (X^\epsilon) < \infty$.
Therefore,
 we have
  $2k\ell(\epsilon)^{-1} \VD^{\epsilon}_\nu(X^\epsilon) \le \HD(X^\epsilon)$. 
   This completes the proof of the 
  first assertion.

\medskip

 We now prove the second assertion. We first show the inequality
  $\HD(\nu) \le 2k \VD(\nu)$. For every $X \subseteq \pi(Z_\nu^\star)$, we have $2k\VD_\nu(X)= \HD(X)$ by the first assertion. By Definition \ref{d:vd-nu},
  we obtain $2k\VD(\nu) = \inf \{\HD(X) : X \subseteq \pi(Z_\nu^\star), \nu(X)  = 1 \}$. By the definition \eqref{eq:def-HD-nu} of $\HD(\nu)$, this implies that $\HD(\nu) \le 2k\VD(\nu)$.

We now prove the inequality $\HD(\nu) \ge 2k \VD(\nu)$. 
Take $X \subseteq \bbP^k$ with $\nu(X) = 1$. Since $\nu(\pi(Z_\nu^\star))=1$, 
we have
 $\nu(X \cap \pi(Z_\nu^\star))=1$.
 Then, by the first assertion and the monotonicity of the Hausdorff dimension, we have
$2k\VD(X \cap \pi(Z_\nu^\star)) = \HD(X \cap \pi(Z_\nu^\star)) \le \HD(X)$. By the definition \eqref{eq:def-HD-nu} of $\HD(\nu)$, 
we deduce that 
$\inf \{2k\VD(X \cap \pi(Z_\nu^\star)) : \nu(X) 
= 1 \} \le \HD(\nu)$. By Definition \ref{d:vd-nu},
this gives $2k\VD(\nu) \le \HD(\nu)$
and completes the proof.
\end{proof}

\subsection{An equivalent definition of $\VD(\nu)$}
\label{subsec_eqdefnl}

We present here an equivalent definition of the volume dimension for sets $X \subset \pi(Z_\nu^\star)$. This definition in particular allows us to prove that  the $\limsup_{\epsilon\to 0}$ in Definition \ref{d:vd} is always a limit; 
 see also Remark \ref{rmk_5.1} for sets $X \subseteq \pi(Z_\nu^\star)$ with $\nu(X) > 0$.
 The advantage of this definition is that we will not have the small exponential terms $e^{-NM\epsilon}$ in the definition of the sets of the covers. In particular, we work with sets which are more similar to actual Bowen balls of fixed radius. On the other hand, the collection of neighbourhoods associated to any $x$ will in some sense depend on $x$. This section is not necessary in order to obtain the main results of the paper.

\medskip

For every
$0<\epsilon\ll \chi_{\min}$, $0 < \kappa < r(\epsilon)$, $l \in \mathbb N$,
and $W\subseteq \pi(Z_\nu^\star (\epsilon))$, we consider the collection
$\widetilde{\mathcal U}^{\kappa}_l (W, \epsilon)$ of open subsets of $\mathbb P^k$ given by
\[
\widetilde{\mathcal U}_l^{\kappa}(W, \epsilon) \defeq 
\big\{ 
\widetilde{U} \subset \mathbb P^k 
\colon 
\exists x \in W,
\text{ such that }   
\widetilde{U}=\widetilde{U}(n_l,x,\kappa) \big\}.
\]
Here,
 $\{n_l\}_{l\geq 0}$ is the sequence associated to $x\in \pi (Z_\nu(\epsilon))$ by Lemma \ref{cor_cordistortion4}, and, letting $\hat x$ be any element of $Z_\nu (\epsilon)$ with $x_0=x$, we set 
\[
\widetilde U (n_l, x, \kappa)\defeq f_{T^{n_l} (\hat x)}^{-n_l} 
\big(B(f^{n_l}(x_0), \kappa)\big),
\]
where the right hand side of the above expression is well-defined by Corollary \ref{cor_cordistortion}.
For every  $\epsilon$ and $\kappa$ as above and
$\mathbb N \ni N^\star \geq n(\epsilon)$, 
we denote by $\widetilde{\mathcal U} (\epsilon, \kappa, N^\star)$ the collection of open sets
\[
\widetilde{\mathcal U}(\epsilon, \kappa, N^\star) \defeq
\bigcup_{n_l \geq N^\star} \widetilde{\mathcal U}^\kappa_{l} (\pi(Z^\star_\nu(\epsilon)), \epsilon).
\]

For every $\alpha \geq 0$ and $Y\subseteq \pi(Z^\star_\nu (\epsilon))$, we define
$\widetilde{\Lambda}^\epsilon_\alpha (Y)\in [0, + \infty]$
as
\begin{equation}\label{eq:def-lambda-tilde}
	\widetilde{\Lambda}^\epsilon_\alpha (Y)
	\defeq
	\limsup_{ \kappa \to 0} \widetilde{\Lambda}^{\epsilon,\kappa}_\alpha (Y),
	\quad  \text{ where }
	\quad  \widetilde{\Lambda}^{\epsilon,\kappa}_\alpha (Y) \defeq
	\lim_{N^\star \to \infty} \inf_{\{U_i\}}
	\sum_{i \ge 1} \volume(\widetilde{U}_i)^\alpha
\end{equation}
and the infimum is taken over all 
covers $\{\widetilde{U}_i\}_{i \ge 1}$ of $Y$
with $\widetilde{U}_i \in \widetilde{\mathcal U}(\epsilon, \kappa, N^\star)$ for all $i \ge 1$.
As in Lemma \ref{l:non-increasing}, one can show that, for every $0<\epsilon\ll \chi_{\min}$
and $Y\subseteq \pi(Z^\star_\nu (\epsilon))$, the function $\alpha \mapsto \widetilde{\Lambda}^\epsilon_\alpha (Y)$ is non-increasing and that, if $ \widetilde{\Lambda}^{\epsilon}_{\alpha_0} (Y) < \infty$ for some $\alpha_0 \geq 0$, then $\widetilde{\Lambda}^{\epsilon}_\alpha (Y) = 0$ for all $\alpha>\alpha_0$.
As a consequence, the following definition is well-posed.
\begin{defn}\label{d:vd-X-tilde}
	For every $0<\epsilon\ll \chi_{\min}$ and $Y \subseteq Z^\star_\nu (\epsilon)$, we set
	\[
	\widetilde{\VD}^\epsilon_\nu(Y) \defeq 
	\sup \{ \alpha :  \widetilde{\Lambda}^\epsilon_\alpha(Y) = \infty\}=
	\inf\{ \alpha : \widetilde{\Lambda}^\epsilon_\alpha(Y) = 0\}.
	\]
\end{defn}

\begin{lem}\label{l:stab-epsilon-tilde}
For every
 $0<\epsilon_1 < \epsilon_2 \ll \chi_{\min}$ 
	and 
	$Y\subseteq \pi(Z^\star_\nu (\epsilon))$,
	 we have $\widetilde{\VD}^{\epsilon_1}_\nu(Y) = \widetilde{\VD}^{\epsilon_2}_\nu(Y)$.
\end{lem}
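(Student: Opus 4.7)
The plan is to prove both inequalities $\widetilde\VD^{\epsilon_1}_\nu(Y) \leq \widetilde\VD^{\epsilon_2}_\nu(Y)$ and $\widetilde\VD^{\epsilon_2}_\nu(Y) \leq \widetilde\VD^{\epsilon_1}_\nu(Y)$ separately. For both $\widetilde\VD^{\epsilon_i}_\nu(Y)$ to be defined, I interpret the hypothesis $Y\subseteq \pi(Z^\star_\nu(\epsilon))$ as $Y$ being contained in both $\pi(Z^\star_\nu(\epsilon_1))$ and $\pi(Z^\star_\nu(\epsilon_2))$; the monotonicity of $\{Z_\nu(\epsilon)\}$ as $\epsilon\to 0$ recorded after Definition \ref{d:Z} makes this a natural reading, and in any case both inclusions can be assumed without loss of generality.

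For the first inequality, the key observation is that, since $Z'_\nu(\epsilon_1) \supseteq Z'_\nu(\epsilon_2)$ for $\epsilon_1 < \epsilon_2$, for every $x \in \pi(Z^\star_\nu(\epsilon_1)) \cap \pi(Z^\star_\nu(\epsilon_2))$ the visit times of $\hat x$ to $Z'_\nu(\epsilon_2)$ form a subset of those to $Z'_\nu(\epsilon_1)$. By Lemma \ref{cor_cordistortion4} and Remark \ref{r:nl-z}, this translates into the inclusion $\{n_l^{(\epsilon_2)}(x)\}_l \subseteq \{n_l^{(\epsilon_1)}(x)\}_l$ of the two sequences. Consequently, for any $N^\star \geq \max\{n(\epsilon_1), n(\epsilon_2)\}$, every $\widetilde U \in \widetilde{\mathcal U}(\epsilon_2,\kappa,N^\star)$ whose center lies in $Y$ also lies in $\widetilde{\mathcal U}(\epsilon_1,\kappa,N^\star)$. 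Hence every $(\epsilon_2,\kappa)$-cover of $Y$ is an $(\epsilon_1,\kappa)$-cover, so $\widetilde\Lambda^{\epsilon_1,\kappa}_\alpha(Y) \leq \widetilde\Lambda^{\epsilon_2,\kappa}_\alpha(Y)$ for every $\alpha,\kappa$; passing to the limsup over $\kappa$ and comparing critical exponents yields $\widetilde\VD^{\epsilon_1}_\nu(Y) \leq \widetilde\VD^{\epsilon_2}_\nu(Y)$.

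For the reverse inequality, the strategy is to convert any near-optimal $(\epsilon_1,\kappa)$-cover $\{\widetilde U(n_i,x_i,\kappa)\}_i$ of $Y$ into an $(\epsilon_2,\kappa)$-cover with comparable $\alpha$-sum. For each $i$ and each $y \in Y \cap \widetilde U(n_i,x_i,\kappa) \subseteq \pi(Z^\star_\nu(\epsilon_2))$, I would take $n_y'$ to be the smallest element of the $\epsilon_2$-sequence of $y$ with $n_y' \geq n_i$; Lemma \ref{cor_cordistortion4}(2) guarantees $n_y' \leq n_i(1+\epsilon_2)$. The sets $\widetilde V_y := \widetilde U(n_y',y,\kappa) \in \widetilde{\mathcal U}(\epsilon_2,\kappa,N^\star)$ cover $Y \cap \widetilde U(n_i,x_i,\kappa)$; applying Besicovitch's theorem (Theorem \ref{thm_Besicovitch}) inside the forward ball $B(f^{n_i}(x_i),\kappa)$ and pulling back via $f^{-n_i}_{T^{n_i}(\hat x_i)}$ produces a subcover of bounded multiplicity.

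The main obstacle is bounding the resulting $\alpha$-sum. Using Corollary \ref{cor_cordistortion}(3e), the volume of each $\widetilde V_y$ is at most $\kappa^{2k} e^{-2n_y'(L_\nu - k\epsilon_2)}$ and the number of $\widetilde V_y$'s needed to cover a given $\widetilde U(n_i,x_i,\kappa)$ is at most $e^{2(n_y'-n_i)L_\nu} \leq e^{2\epsilon_2 n_i L_\nu}$. Substituting $\volume(\widetilde U(n_i,x_i,\kappa)) \asymp \kappa^{2k}e^{-2 n_i L_\nu}$ yields the key estimate
\[
\widetilde\Lambda^{\epsilon_2,\kappa}_\alpha(Y) \lesssim \kappa^{2k\alpha\epsilon_2}\, \widetilde\Lambda^{\epsilon_1,\kappa}_{\alpha(1-\epsilon_2)}(Y).
\]
The hardest step will then be to exploit the vanishing factor $\kappa^{2k\alpha\epsilon_2}$ as $\kappa\to 0$, together with (at most polynomial) control of $\widetilde\Lambda^{\epsilon_1,\kappa}_{\alpha(1-\epsilon_2)}$ in $1/\kappa$ via an analogue of Lemma \ref{lem_indepkappa} for $\widetilde\Lambda$, so that the product tends to $0$ for every $\alpha$ strictly above $\widetilde\VD^{\epsilon_1}_\nu(Y)$. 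This forces $\widetilde\Lambda^{\epsilon_2}_\alpha(Y) = 0$ for such $\alpha$ and concludes $\widetilde\VD^{\epsilon_2}_\nu(Y) \leq \widetilde\VD^{\epsilon_1}_\nu(Y)$.
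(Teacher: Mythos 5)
The paper's ``proof'' of this lemma is a single sentence: the sets $\widetilde U(n_l,x,\kappa)$ do not depend on $\epsilon$. You correctly identify that this is not the whole story, because while each individual $\widetilde U(n_l,x,\kappa)$ is a purely geometric object, the \emph{collections} $\widetilde{\mathcal U}(\epsilon,\kappa,N^\star)$ depend on $\epsilon$ in two ways: through the visit sequences $\{n_l\}$ to $Z'_\nu(\epsilon)$, and through the set $\pi(Z^\star_\nu(\epsilon))$ of allowed centers. Your first half --- using $Z'_\nu(\epsilon_2)\subseteq Z'_\nu(\epsilon_1)$ so that the $\epsilon_2$-visit times are eventually a subset of the $\epsilon_1$-visit times, hence $\widetilde{\mathcal U}(\epsilon_2,\kappa,N^\star)\subseteq\widetilde{\mathcal U}(\epsilon_1,\kappa,N^\star)$ for $N^\star$ large, giving $\widetilde\VD^{\epsilon_1}_\nu(Y)\le\widetilde\VD^{\epsilon_2}_\nu(Y)$ --- is sound, modulo the small subtlety that the lower thresholds $n^\star(\hat x),\,n'(\epsilon)$ in the construction of the sequences must be absorbed by the limit $N^\star\to\infty$, which is fine.

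The reverse inequality is where the real content lies, and your sketch has a genuine gap. The estimate you target has the shape
\[
\widetilde\Lambda^{\epsilon_2,\kappa}_\alpha(Y)\;\lesssim\;\kappa^{c\epsilon_2}\,\widetilde\Lambda^{\epsilon_1,\kappa}_{\alpha-O(\epsilon_2)}(Y),
\]
with a shifted exponent $\alpha-O(\epsilon_2)$ on the right-hand side. This shift is unavoidable in your argument because the $\widetilde V_y$'s have smaller volume than $\widetilde U(n_i,x_i,\kappa)$ by a factor $e^{-2(n_y'-n_i)L_\nu}$, and the multiplicity of the cover compensates only in the volume, not in the critical exponent. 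As a result, taking $\kappa\to0$ and critical exponents yields at best $\widetilde\VD^{\epsilon_2}_\nu(Y)\le\widetilde\VD^{\epsilon_1}_\nu(Y)+O(\epsilon_2)$, which combined with the first half gives equality \emph{up to an $O(\epsilon_2)$ error} --- strictly weaker than the exact equality claimed. Moreover, the step you flag as hardest (exploiting the factor $\kappa^{c\epsilon_2}$ via a $\widetilde\Lambda$-analogue of Lemma~\ref{lem_indepkappa}) runs into a competing polynomial blow-up in $1/\kappa$ of order $(\kappa_0/\kappa)^{2k}$, which dominates $\kappa^{c\epsilon_2}$ when $\epsilon_2$ is small; so the $\kappa\to0$ device does not close this gap. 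In short: your scrutiny of the paper's one-liner is well-placed and your first half is correct, but the second half as sketched does not establish the stated equality.
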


\begin{proof}
	The statement is clear since the sets $\widetilde U (n_l, x, \kappa)$ do not depend on $\epsilon$.
\end{proof}

\begin{lem}\label{l:equivalence-vd-tilde}
	For every $0<\epsilon \ll \chi_{\min}$ 
	and 
	$Y\subseteq \pi(Z^\star_\nu (\epsilon))$
	 we have $$\ell(\epsilon)^{-1}\widetilde{\VD}^{\epsilon}_\nu(Y) \le \VD^{\epsilon}_\nu(Y) \le \beta(\epsilon)\widetilde{\VD}^{\epsilon}_\nu(Y),$$ where
	$\ell(\epsilon) > 1$ is as in \eqref{eq:def-ell} and
	$\beta(\epsilon) \defeq \frac{L_\nu+k\epsilon}{L_\nu - k\epsilon} 
	\cdot
	\big( \min_j   \frac{\chi_j - \epsilon}{\chi_j + (2M+1)\epsilon}  - \frac{1}{n(\epsilon)}\big)^{-1}$.
\end{lem}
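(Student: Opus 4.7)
The plan is to prove each inequality separately by transforming a cover of $Y$ by sets of one kind into a cover by sets of the other kind with controlled volume exponent. The main geometric input will be the ellipse sandwich containments for both $U(N,x,\kappa,\epsilon)$ (from Lemma~\ref{l:nu-U-compare}(1)) and $\widetilde U(n_l,x,\kappa)$ (from Corollary~\ref{cor_cordistortion}(3c)): the inclusion of one such set in another reduces to a simple linear inequality on the parameters $N$ and $n_l$. The corresponding volume bounds (Lemma~\ref{l:nu-U-compare}(2) and Corollary~\ref{cor_cordistortion}(3e)) then translate this into an exponential comparison between the volumes.

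For the upper bound $\VD^{\epsilon}_\nu(Y) \leq \beta(\epsilon)\widetilde{\VD}^{\epsilon}_\nu(Y)$, I would fix $\alpha > \widetilde{\VD}^{\epsilon}_\nu(Y)$ and, for small $\kappa$ and large $N^\star$, start from a $\widetilde U$-cover $\{\widetilde U_i = \widetilde U(n_{l_i}, x_i, \kappa)\}$ of $Y$ with $\sum_i \volume(\widetilde U_i)^\alpha$ arbitrarily small. For each $i$, I would set $N_i := \lfloor n_{l_i}\gamma(\epsilon) \rfloor$ with $\gamma(\epsilon) := \min_j (\chi_j - \epsilon)/(\chi_j + (2M+1)\epsilon)$, and $U_i := U(N_i, x_i, \kappa, \epsilon)$. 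The ellipse sandwich will force $\widetilde U_i \subseteq U_i$, so $\{U_i\}$ still covers $Y$. The volume comparison would use $N_i/n_{l_i} \geq \gamma(\epsilon) - 1/n(\epsilon)$ (from the floor and $n_{l_i} \geq n(\epsilon)$), combined with the defining algebraic identity $\beta(\epsilon)(\gamma(\epsilon) - 1/n(\epsilon))(L_\nu - k\epsilon) = L_\nu + k\epsilon$, to produce $\volume(U_i)^{\beta(\epsilon)} \leq \kappa^{2k(\beta(\epsilon) - 1)}\volume(\widetilde U_i)$. Summing and using $\beta(\epsilon) \geq 1$ and $\kappa \leq 1$ would give $\Lambda^\epsilon_{\alpha\beta(\epsilon)}(Y) = 0$; taking the infimum over $\alpha$ closes this direction.

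For the lower bound $\ell(\epsilon)^{-1}\widetilde{\VD}^{\epsilon}_\nu(Y) \leq \VD^{\epsilon}_\nu(Y)$, I would run the symmetric argument, starting from a $U$-cover $\{U_i = U(N_i, x_i, \kappa, \epsilon)\}$ of $Y$ with small $\sum_i \volume(U_i)^\alpha$. For each $i$, I would pick $n_{l_i}$ in the sequence attached to $x_i$ to be the largest element still compatible with the ellipse sandwich inequality ensuring $\widetilde U_i := \widetilde U(n_{l_i}, x_i, \kappa) \supseteq U_i$; the $(1+\epsilon)$-spacing of $\{n_l\}$ again controls the resulting discretization loss, giving $n_{l_i}$ comparable to $N_i$ up to a factor depending only on $\epsilon$ and the spread of the $\chi_j$'s. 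The factor $\ell(\epsilon) = (L_\nu + k(2M+1)\epsilon)/(L_\nu - k\epsilon)$ should arise precisely as the exponent mismatch between the upper bound for $\volume(\widetilde U_i)$ (involving $L_\nu - k\epsilon$) and the lower bound for $\volume(U_i)$ (involving $L_\nu + k(2M+1)\epsilon$), so the analogous volume computation should yield $\widetilde\Lambda^\epsilon_{\alpha\ell(\epsilon)}(Y) = 0$ and hence $\widetilde{\VD}^\epsilon_\nu(Y) \leq \alpha\ell(\epsilon)$.

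The main subtlety will lie in the second direction: tracking the two competing sources of loss — the spread between the Lyapunov exponents $\chi_j$ (which makes the ellipse sandwich anisotropic) and the $(1+\epsilon)$-spacing of the sequence $\{n_l\}$ — simultaneously with the exponential volume decay at rate $2L_\nu$, and verifying that these combine to produce exactly the factor $\ell(\epsilon)$ and not something larger. The identity used in the first direction, in which $\beta(\epsilon)$ absorbs both the discretization factor $1/n(\epsilon)$ and the $L_\nu \pm k\epsilon$ mismatch, suggests the analogous identity in the reverse direction, with the simpler factor $\ell(\epsilon)$ reflecting the tighter volume bounds available for the sets $\widetilde U$ via Corollary~\ref{cor_cordistortion}(3e).
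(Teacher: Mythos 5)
Your proof of the second inequality $\VD^{\epsilon}_\nu(Y) \le \beta(\epsilon)\widetilde{\VD}^{\epsilon}_\nu(Y)$ is essentially the paper's argument: you pass from a $\widetilde U$-cover to a $U$-cover via $N_i := \lfloor n_{l_i}\gamma(\epsilon)\rfloor$, verify the inclusion $\widetilde U_i \subseteq U_i$ via the ellipse sandwich, and the algebraic identity defining $\beta(\epsilon)$ closes the volume estimate. That direction is fine.

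The first inequality $\ell(\epsilon)^{-1}\widetilde{\VD}^{\epsilon}_\nu(Y) \le \VD^{\epsilon}_\nu(Y)$ is where your proposal has a genuine gap, and you half-suspect it yourself. You propose to choose, for each $U_i = U(N_i, x_i, \kappa, \epsilon)$, the largest $n_{l_i}$ for which the ellipse sandwich forces $\widetilde U(n_{l_i}, x_i, \kappa) \supseteq U_i$. But that sandwich condition reads: the inner ellipse $\mathcal E_{x_i}(\kappa\,e^{-n_{l_i}(\chi_j+\epsilon)})$ of $\widetilde U(n_{l_i},x_i,\kappa)$ must contain the outer ellipse $\mathcal E_{x_i}(\kappa\,e^{-N_i(\chi_j-\epsilon)})$ of $U_i$, i.e.\ $n_{l_i}(\chi_j+\epsilon) \le N_i(\chi_j-\epsilon)$ for all $j$, hence $n_{l_i} \le N_i\min_j\frac{\chi_j-\epsilon}{\chi_j+\epsilon} < N_i$. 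On the other hand, the volume comparison $\volume(\widetilde U_i)^{\ell(\epsilon)} \le \kappa^{2k(\ell(\epsilon)-1)}\volume(U_i)$ requires $\ell(\epsilon)\,n_{l_i}(L_\nu-k\epsilon) \ge N_i(L_\nu+k(2M+1)\epsilon)$, which, by the defining identity $\ell(\epsilon)(L_\nu-k\epsilon)=L_\nu+k(2M+1)\epsilon$, simplifies to $n_{l_i}\ge N_i$. These two requirements are incompatible: the ellipse route yields $n_{l_i}<N_i$ always (even when all $\chi_j$ are equal), so with exponent $\ell(\epsilon)$ the exponential factor comes out $>1$. Your approach would instead force an enlarged constant $\ell(\epsilon)\cdot(1+\epsilon)\frac{\chi_{\min}+\epsilon}{\chi_{\min}-\epsilon}$, which is strictly bigger than $\ell(\epsilon)$ and also depends on $\chi_{\min}$, not merely on $L_\nu$.

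The paper avoids this by choosing $\widetilde U_i := \widetilde U(n_{l(i)+1}, x_i, \kappa)$ with $n_{l(i)+1}$ the smallest element of the sequence strictly greater than $N_i$. The inclusion $U_i \subseteq \widetilde U(n_{l(i)+1}, x_i, \kappa)$ is \emph{not} an ellipse-sandwich consequence (that sandwich would, as shown, demand $n_{l(i)+1} < N_i$). Instead it is the containment \eqref{e:U-n-nl+1} established inside the proof of Lemma~\ref{l:nu-U-compare}, which exploits the built-in slack $e^{-N M\epsilon}$ in the defining radius of $U(N,x,\kappa,\epsilon)$ together with the Lipschitz bound \eqref{eq:inclusion-M-2} and the spacing $n_{l(i)+1}-N_i \le \epsilon N_i$. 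With that choice $n_{l(i)+1} > N_i$, and the exponent $\ell(\epsilon)$ then closes immediately without any contribution from the spread of the $\chi_j$'s. To repair your argument, replace the ellipse-based choice of $n_{l_i}$ by this direct inclusion and take $n_{l_i} := n_{l(i)+1}$.
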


Observe that $\beta(\epsilon)$ as in the statement above satisfies $\beta(\epsilon)>1$ for all $0<\epsilon \ll \chi_{\min}$
and $\beta(\epsilon)=1+O(\epsilon)$ as $\epsilon\to 0$.

\begin{proof}
We first prove the first inequality. Suppose
$\alpha\geq 0$ is such that
 $\Lambda^\epsilon_\alpha(Y) = 0$. Then for any $\eta > 0$ and $0<\kappa<r(\epsilon)$, there exists a $(\kappa,\epsilon)$-cover $\{U_i\}_{i \ge 1}$ of $Y$ of the form $U_i = U(N_i,x_i,\kappa,\epsilon)$, with $N_i\geq  N^\star \geq n(\epsilon)$ for all $i$, such that $\sum_{i \ge 1} \volume(U_i)^{\alpha} < \eta$. 
	
For each $i \ge 1$, let $l(i)$ be 
such that
 $n_{l(i)}\leq N_i < n_{l(i)+1}$. Such $l(i)$ exists since, by Lemma \ref{cor_cordistortion4}, we have $n_0(x)\leq n(\epsilon)$ for all $x\in \pi(Z_\nu (\epsilon))$. 
For every $i$, we then have $U_i 
	\subset\widetilde U_i \defeq \widetilde{U}(n_{l(i)+1},x_i,\kappa)$,
	see also \eqref{e:U-n-nl+1} in the proof of Lemma \ref{l:nu-U-compare}. 
	In particular, we have $\{\widetilde U_i\}\subset \widetilde{\mathcal U} (\epsilon, \kappa, N^\star)$
	and the sets $\{\widetilde U_i\}$ form a cover of $Y$.
	It follows  from the definition of $\ell(\epsilon)$ that for all $i\geq 1$, we have
		\[
		\frac{\volume(\widetilde{U}(n_{l(i)+1},x_i,\kappa))^{\ell(\epsilon)}}{\volume(U(N_i,x_i,\kappa,\epsilon))} 
		\le \frac{\kappa^{2k\ell(\epsilon)}e^{-2n_{l(i)+1}(L_\nu-k\epsilon)\ell(\epsilon)}}{\kappa^{2k}e^{-2 N_i(L_\nu+k(2M+1)\epsilon)}}
		 = \kappa^{2k(\ell(\epsilon)-1)}e^{2(L_\nu+\kappa\epsilon(2M+1))(N_i-n_{l(i)+1})}
		 \leq 1,
		 \]
where in the last inequality we used the facts the $N_i < n_{l(i)+1}$ and 
$\kappa<r(\epsilon)<1$.
 In particular, we have
\[
\sum_{i \ge 1} \volume\big(\widetilde{U}(n_{l(i)+1},x_i,\kappa)\big)^{\alpha\ell(\epsilon)}
 \le
\sum_{i \ge 1} \volume(U_i)^{\alpha} < 
\eta,
\]
which gives the inequality
$\widetilde{\Lambda}^{\epsilon,\kappa}_{\alpha\ell(\epsilon)} (Y)
\le
 \Lambda^{\epsilon,\kappa}_{\alpha} (Y)$ for any $0<\kappa<r(\epsilon)$.
 Taking 
the
limsup over $\kappa$ as in the definition of $\widetilde{\Lambda}^{\epsilon}_{\alpha\ell(\epsilon)} (Y)$,
we obtain $\widetilde{\Lambda}^{\epsilon}_{\alpha\ell(\epsilon)} (Y) < \infty$.
By the choice of $\alpha$, we deduce
the desired 
 inequality
$\widetilde{\VD}^\epsilon_\nu(Y) \le \ell(\epsilon)\VD^\epsilon_\nu(Y)$.
	
\medskip
	
We now prove the second inequality. Suppose $\widetilde{\Lambda}^\epsilon_\alpha(Y) = 0$.
Then for any $\eta > 0$ and $0<\kappa<r(\epsilon)$, there exists a cover
$\{\widetilde{U}_i\}_{i \ge 1}\subset \widetilde {\mathcal U} (\epsilon, \kappa, N^\star)$ of $Y$,
which each $\widetilde U_i$ of the form $\widetilde{U}_i = \widetilde{U}(n_{l(i)},x_i,\kappa)$,
 such that $\sum_{i \ge 1} \volume(\widetilde{U}_i)^{\alpha} < \eta$. For each $i \ge 1$, set 
$$N_i \defeq  \big\lfloor n_{l(i)} \cdot \min_j  \frac{\chi_j - \epsilon}{\chi_j + (2M+1)\epsilon} \big\rfloor.$$
	
From the definition of $N_i$ and Lemma \ref{l:nu-U-compare},
for all $i\geq 1$, we have 
	$$\widetilde{U}(n_{l(i)},x_i,\kappa) \subset U(N_i, x_i, \kappa, \epsilon).$$
	It follows that, for every $i\geq 1$, we have
	\[\frac{\volume(U(N_i,x_i,\kappa,\epsilon))^{\beta(\epsilon)}}{\volume(\widetilde{U}(n_{l(i)},x_i,\kappa))}
	\le
	\frac{\kappa^{2k\beta(\epsilon)}e^{-2N_i(L_\nu-k\epsilon)\beta(\epsilon)}}{\kappa^{2k}e^{-2n_{l(i)}(L_\nu+k\epsilon)}}
\leq	1,
	\]
	where in the last step we used the facts
that $\kappa<r(\epsilon)<1$ and 	
	 that, 
	since $n_{l(i)}\geq n(\epsilon)$ for all $i\geq 1$ and $\lfloor r\rfloor \geq r-1$ for all $r >0$,
	 we have
	\[
	N_i \frac{L_\nu - k\epsilon}{L_\nu + k\epsilon}
	\beta(\epsilon)
	\geq
	\frac{n_{l(i)}  \cdot \min_j  \frac{\chi_j - \epsilon}{\chi_j + (2M+1)\epsilon } -1}{\min_j  \frac{\chi_j - \epsilon}{\chi_j + (2M+1)\epsilon} - \frac{1}{n(\epsilon)}}
	\geq n_{l(i)} .
	\]
Therefore, we have
$$\sum_{i \ge 1} \volume(U(N_i,x_i,\kappa,\epsilon))^{\alpha\beta(\epsilon)} \le 
\eta,$$
which gives the inequality
$\Lambda^{\epsilon,\kappa}_{\alpha\beta(\epsilon)} (Y)\le
\widetilde{\Lambda}^{\epsilon,\kappa}_{\alpha} (Y)$ for any $0<\kappa<r(\epsilon)$. Taking
the limsup over $\kappa$ as in the definition of $\Lambda^{\epsilon}_{\alpha\beta(\epsilon)}(Y)$, we obtain $\Lambda^{\epsilon}_{\alpha\beta(\epsilon)} (Y) = 0$.
By the choice of $\alpha$, we have
 $\VD^\epsilon_\nu(Y) \le \beta(\epsilon)\widetilde{\VD}^\epsilon_\nu(Y)$. The proof is complete.
\end{proof}

Thanks to Lemma \ref{l:equivalence-vd-tilde}, one can see that the $\limsup_{\epsilon\to 0}$
in the Definition \ref{d:vd-X} is actually a limit.
Recall that, for every $X\subseteq \pi(Z_\nu^\star)$, we denote $X^\epsilon\defeq X\cap \pi(Z^\star_\nu(\epsilon))$.

\begin{cor} \label{cor_limit}
	For every $X \subseteq \pi(Z^\star_\nu)$, we have
	\[\VD_\nu (X)=\lim_{\epsilon \to 0} \VD^\epsilon_\nu (X^\epsilon).\]
\end{cor}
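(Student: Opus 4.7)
The plan is to reduce the existence of $\lim_{\epsilon\to 0}\VD^\epsilon_\nu(X^\epsilon)$ to the corresponding question for $\widetilde{\VD}^\epsilon_\nu(X^\epsilon)$ via the sandwich estimate in Lemma \ref{l:equivalence-vd-tilde}, and then exploit the $\epsilon$-independence granted by Lemma \ref{l:stab-epsilon-tilde} to convert the problem into a monotonicity statement.

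For the first step, Lemma \ref{l:equivalence-vd-tilde} gives
\[
\ell(\epsilon)^{-1}\widetilde{\VD}^{\epsilon}_\nu(X^\epsilon) \le \VD^{\epsilon}_\nu(X^\epsilon) \le \beta(\epsilon)\widetilde{\VD}^{\epsilon}_\nu(X^\epsilon),
\]
with $\ell(\epsilon),\beta(\epsilon)\to 1$ as $\epsilon\to 0$. Since $\VD^\epsilon_\nu(X^\epsilon)\le 1$ by Lemma \ref{lem_vdbdd-epsilon}, the same sandwich bounds $\widetilde{\VD}^\epsilon_\nu(X^\epsilon)$ uniformly. Consequently $|\VD^\epsilon_\nu(X^\epsilon)-\widetilde{\VD}^\epsilon_\nu(X^\epsilon)|\to 0$, and it is enough to show that $\lim_{\epsilon\to 0}\widetilde{\VD}^\epsilon_\nu(X^\epsilon)$ exists.

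For the second step, I would use Lemma \ref{l:stab-epsilon-tilde}: whenever $Y$ lies in $\pi(Z^\star_\nu(\epsilon_i))$ for both $i=1,2$, the values $\widetilde{\VD}^{\epsilon_1}_\nu(Y)$ and $\widetilde{\VD}^{\epsilon_2}_\nu(Y)$ agree, so I may write $\widetilde{\VD}_\nu(Y)$ unambiguously. To apply this uniformly in $\epsilon$, I would arrange that the family $\{\pi(Z^\star_\nu(\epsilon))\}_\epsilon$ is non-decreasing as $\epsilon\to 0$. This can be secured by choosing the integer $m(\epsilon)$ in Definition \ref{d:Zstar} to be non-increasing in $\epsilon$, which is admissible since the sets $Z_\nu(\epsilon,m)$ are non-decreasing in $m$ and the freedom in choosing $m(\epsilon)$ (subject only to the measure bound $\nu(\pi(Z_\nu(\epsilon))\setminus\pi(Z_\nu(\epsilon,m(\epsilon))))<\epsilon$) allows such a choice without affecting any earlier statement. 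With this setup, $X^\epsilon=X\cap \pi(Z^\star_\nu(\epsilon))$ is non-decreasing as $\epsilon\to 0$.

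For the third step, the analogue of Lemma \ref{l:increasing-epsilon} for $\widetilde{\VD}$ (which follows verbatim from the same argument, using that an admissible cover of the larger set also covers the smaller one) shows that $\epsilon\mapsto \widetilde{\VD}_\nu(X^\epsilon)$ is non-decreasing as $\epsilon\to 0$. Being bounded above, this monotone function converges; combining with Step 1 yields the existence of $\lim_{\epsilon\to 0}\VD^\epsilon_\nu(X^\epsilon)$, and this common value is $\VD_\nu(X)$ by Definition \ref{d:vd}. The main delicate point is the bookkeeping in Step 2 required to secure monotonicity of $\pi(Z^\star_\nu(\epsilon))$ in $\epsilon$; once this is set up, the remainder of the argument is a direct application of the already-established lemmas.
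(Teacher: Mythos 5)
Your proof takes essentially the same route as the paper's: reduce to $\widetilde{\VD}^\epsilon_\nu$ via Lemma \ref{l:equivalence-vd-tilde}, then use the $\epsilon$-independence of $\widetilde{\VD}^\epsilon_\nu$ (Lemma \ref{l:stab-epsilon-tilde}) together with the monotonicity of $X^\epsilon$ in $\epsilon$ to conclude that $\widetilde{\VD}^\epsilon_\nu(X^\epsilon)$ converges. The only cosmetic difference is that you first establish $|\VD^\epsilon_\nu(X^\epsilon)-\widetilde{\VD}^\epsilon_\nu(X^\epsilon)|\to 0$ and then pass to $\widetilde{\VD}$, whereas the paper reads off the limit directly from the two-sided estimate; the two are equivalent.

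You are right that the inclusion $X^{\epsilon_1}\supseteq X^{\epsilon_2}$ for $\epsilon_1<\epsilon_2$ requires the family $\{Z^\star_\nu(\epsilon)\}_\epsilon$ to be nested, a point the paper's proof uses without comment. However, taking $m(\epsilon)$ non-increasing does not, by itself, guarantee nesting: membership in $Z_\nu(\epsilon,m(\epsilon))$ is decided by the inequality $m_1(\epsilon,\hat x)\le m(\epsilon)$, and the function $m_1(\epsilon,\cdot)$ itself depends on $\epsilon$, so knowing $m_1(\epsilon_2,\hat x)\le m(\epsilon_2)\le m(\epsilon_1)$ gives no control on $m_1(\epsilon_1,\hat x)$. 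A robust arrangement instead fixes a decreasing sequence $\epsilon_j\searrow 0$ with $\sum_j\epsilon_j<\infty$ and replaces $Z^\star_\nu(\epsilon_j)$ by $\bigcap_{i\ge j}Z^\star_\nu(\epsilon_i)$: these sets are nested by construction, their $\hat\nu$-measures still tend to $1$ since the complements are summable, and being subsets of the original $Z^\star_\nu(\epsilon_j)$ they retain every property used elsewhere. With that small adjustment to your Step~2, the argument is complete.
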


\begin{proof}
	For every $X \subseteq \pi(Z_\nu^\star)$, set
	$\widetilde \VD_\nu (X)\defeq\lim_{\epsilon\to 0} \widetilde \VD_\nu^\epsilon (X^\epsilon)$.
	The limit is well-defined and equal to the supremum over $0<\epsilon \ll \chi_{\min}$ since, for every
	$0<\epsilon_1<\epsilon_2\ll \chi_{\min}$, we have
	$\widetilde{\VD}^{\epsilon_1}_\nu (X^{\epsilon_2}) = \widetilde{\VD}^{\epsilon_2}_\nu (X^{\epsilon_2})$ by Lemma \ref{l:stab-epsilon-tilde}
	and
	$\widetilde{\VD}^{\epsilon_1}_\nu (X^{\epsilon_1}) \geq  \widetilde{\VD}^{\epsilon_1}_\nu (X^{\epsilon_2})$ since $X^{\epsilon_1}\supseteq X^{\epsilon_2}$.
	It follows from Lemma \ref{l:equivalence-vd-tilde} that $\lim_{\epsilon\to 0}\VD^\epsilon_\nu (X^\epsilon)$
	is well-defined and equal to $\widetilde \VD_\nu (X)$. The assertion follows.
\end{proof}

\begin{rmk}
A further possible (equivalent) way to define the volume dimension is the following. Take $\nu\in \mathcal M^+ (f)$. Fix $0<\epsilon\ll \chi_{\min}$ and take $Y\subseteq \pi(Z^\star_\nu (\epsilon))$.
For every $\alpha\geq 0$, define
\[
	\check \Lambda_\alpha^\epsilon (Y) 
	\defeq
	\limsup_{\kappa \to 0}  \check{\Lambda}^{\epsilon,\kappa}_\alpha (Y),
	\quad
	 \text{ where }
	 \quad 
	  \check{\Lambda}^{\epsilon,\kappa}_\alpha (Y) \defeq
	 \lim_{N^\star \to \infty} \inf_{\{U_i\}} 
	\sum_{i \ge 1} e^{-2N(U_i)L_{\nu}(f)\alpha}  \kappa^{2k\alpha},
\]
and the infimum is taken over all countable covers $\{U_i\}_{i \ge 1}\subset \mathcal{U}(\epsilon, \kappa, N^\star)$ of $Y$.
Recall that $L_\nu(f)$ denotes the sum of the Lyapunov exponents of $\nu$.
As in Lemma \ref{l:non-increasing}, one can prove that, for every $0<\epsilon\ll \chi_{\min}$
and $Y\subseteq \pi(Z^\star_\nu(\epsilon))$, the function $\alpha\mapsto  \check \Lambda_\alpha^\epsilon (Y)$
is non-increasing in $\alpha$, and that if $\check \Lambda_{\alpha_0}^\epsilon (Y)$ is finite,
then $\check \Lambda_{\alpha}^\epsilon(Y)=0$ for all $\alpha>\alpha_0$.
Hence, the quantity
	\[
	\check {\VD}_\nu^\epsilon (Y)
	\defeq 
	\inf \big\{\alpha \colon \check \Lambda_{\alpha}^\epsilon(Y) =0 \big\}
	=\sup \big\{ \alpha \colon\check \Lambda_{\alpha}^\epsilon(Y)  =\infty\big\}
	\] 
is well-defined for all $0<\epsilon\ll \chi_{\min}$ and $Y\subseteq \pi(Z^\star_\nu(\epsilon))$.
For every $0<\epsilon\ll\chi_{\min}$, define the constants
	\[
	\ell_-(\epsilon) \defeq \frac{
		L_\nu (f)- k\epsilon}{L_\nu (f)}
	\quad
	\mbox{ and }
	\quad
	\ell_+(\epsilon) \defeq
	\frac{ L_\nu (f)+ (2M+1)k\epsilon}{L_\nu (f)}.
	\]
	Observe that, for all $0<\epsilon\ll\chi_{\min}$, we have
	$\ell_-(\epsilon)<1<\ell_+(\epsilon)$ and 
	$\ell_-(\epsilon), \ell_+ (\epsilon)\to 1$ as $\epsilon \to 0$.
One can show in this case that, for every $0<\epsilon\ll\chi_{\min}$ and  $Y\subseteq \pi(Z^\star_\nu (\epsilon))$, we have
\begin{equation}\label{e:equivalence-2}
\ell_-(\epsilon) \VD_{\nu}^\epsilon(Y) \le \check {\VD}^\epsilon_\nu (Y) \le \ell_+(\epsilon) \VD_{\nu}^\epsilon(Y).
\end{equation}
	
Take now $X\subseteq \pi(Z_\nu^\star)$.
As before, setting $X^\epsilon \defeq X\cap \pi(Z^\star_\nu(\epsilon))$ for every $0<\epsilon\ll \chi_{\min}$, we can define
\[\begin{aligned}
		\check{\VD}_\nu(X) &\defeq \limsup_{\epsilon \to 0}  \VD_\nu^{\epsilon}(X^\epsilon)
		\quad  \mbox{ and }\\
		\check{\VD} (\nu) &  \defeq 
		\inf \big\{
		\check{\VD}_\nu (X) \colon X \subseteq
		\pi(Z_\nu^\star)
		\mbox{ and } \nu(X)=1\big\}.
\end{aligned}
\]
It follows from \eqref{e:equivalence-2}, applied with $Y=X^\epsilon$,
that $\check {\VD}_\nu (X) = \VD_\nu (X)$ for all $X\subseteq \pi(Z_\nu^\star)$,
and that $\check{\VD} (\nu) = \VD (\nu)$.
\end{rmk}

\subsection{From local volume dimensions to volume dimensions} \label{subsec_keyprop}
Fix a measure $\nu \in \mathcal{M}^+(f)$ and $0<\epsilon\ll \chi_{\min}$.
For $x \in \pi(Z^\star_\nu(\epsilon))$, $0<\kappa<r(\epsilon)$, and $N\geq n(\epsilon)$
recall that $\delta_{x} (\epsilon, \kappa, N)$ is defined in \eqref{eq:def-delta-xekN}
and well-defined by Lemma \ref{l:nu-U-compare}. The integer $m_1(\epsilon, x)$
in Theorem \ref{thm_11.4.2}
is uniformly bounded
from above
 for all $x\in \pi(Z^\star_\nu (\epsilon))$ by the definition \eqref{d:Zstar}
of $Z^\star_\nu (\epsilon)$. 
This fact is crucial in the proof of the next statement.
 Recall that a measure is {\it non-atomic} if it does not assign mass to points.
 
\begin{prop}
\label{prop_2.1}
Let $f$ be a holomorphic endomorphism of $\mathbb P^k$
of algebraic degree $d\geq 2$ 
 and take $\nu \in \mathcal{M}^+(f)$.  Assume that $\nu$ is non-atomic.
Fix $\alpha_1, \alpha_2 \geq 0$ and $0<\epsilon\ll \chi_{\min}$.
Let $Y \subseteq \pi(Z^\star_\nu(\epsilon))$ be 
such that $\nu(Y) > 0$. Suppose
that for every 
$0<\kappa < r(\epsilon)$ there exists $m=m(\epsilon,\kappa)\geq n(\epsilon)$ 
such that 
\begin{equation}
 \label{e:assumption-young}
	\alpha_1 
	\le
\delta_x (\epsilon,\kappa, N) 	
	\le 
	\alpha_2
	\quad
	\mbox{ for all } 
x \in Y 	
	\mbox{ and } 
 N\geq m(\epsilon,\kappa).
\end{equation}
Then,  we have
$$ \alpha_1 \le \VD^{\epsilon}_\nu(Y) \le \alpha_2.$$
\end{prop}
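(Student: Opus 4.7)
The plan is to prove the two inequalities $\VD^\epsilon_\nu(Y)\le\alpha_2$ and $\VD^\epsilon_\nu(Y)\ge\alpha_1$ separately, adapting Young's classical dimension criterion to the dynamical neighbourhoods $U(N,x,\kappa,\epsilon)$ in place of Euclidean balls. In both directions the hypothesis rewrites as the pointwise comparison $\volume(U(N,x,\kappa,\epsilon))^{\alpha_2}\le\nu(U(N,x,\kappa,\epsilon))\le\volume(U(N,x,\kappa,\epsilon))^{\alpha_1}$ valid for $x\in Y$, $0<\kappa<r(\epsilon)$ and $N\ge m(\epsilon,\kappa)$, which transfers measure-theoretic bounds on $\nu$ into bounds on the sums $\sum_i\volume(U_i)^\alpha$ that define $\Lambda^\epsilon_\alpha(Y)$.

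For the upper bound, I would construct, for each $\kappa<r(\epsilon)/2$ small enough and each $N^\star\ge m(\epsilon,2\kappa)$, an explicit $(\epsilon,\kappa)$-cover of $Y$ with bounded multiplicity. Pushing forward by $f^{N^\star}$ and applying Besicovitch's theorem (Theorem \ref{thm_Besicovitch}) to $f^{N^\star}(Y)$ with constant radius $\kappa e^{-N^\star M\epsilon}$ produces balls $\{B_j\}_j$ covering $f^{N^\star}(Y)$ that decompose into $b(2k)$ families of pairwise disjoint balls. Each $B_j$ is pulled back via every inverse branch of $f^{N^\star}$ coming from an orbit in $Z^\star_\nu(\epsilon)$; by Lemma \ref{l:nu-U-compare} each such branch extends to $B_j$, and for each pullback $U_{j,q}$ that meets $Y$ one picks $x^*_{j,q}\in U_{j,q}\cap Y$. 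The resulting family covers $Y$, since for every $x\in Y$ the inverse branch through $x$ yields a preimage of the $B_j$ containing $f^{N^\star}(x)$. Enlarging each $U_{j,q}$ to $\widetilde U_{j,q}\defeq U(N^\star,x^*_{j,q},2\kappa,\epsilon)\supseteq U_{j,q}$ and noting that $f^{N^\star}(\widetilde U_{j,q})\subseteq B(y_j,3\kappa e^{-N^\star M\epsilon})$, the collection $\{\widetilde U_{j,q}\}$ inherits a bounded multiplicity $C_b=C_b(k)$ from the Besicovitch partition (two $\widetilde U$'s sharing a point $y$ must come from the same inverse branch through $y$, and the relevant $y_j$'s then lie in a ball of radius $3\kappa e^{-N^\star M\epsilon}$ around $f^{N^\star}(y)$). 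The upper hypothesis applied to $x^*_{j,q}$ with parameter $2\kappa$ yields $\volume(U_{j,q})^{\alpha_2}\le\volume(\widetilde U_{j,q})^{\alpha_2}\le\nu(\widetilde U_{j,q})$, so summing gives $\sum_{j,q}\volume(U_{j,q})^{\alpha_2}\le C_b$; hence $\Lambda^\epsilon_{\alpha_2}(Y)\le C_b<\infty$ and $\VD^\epsilon_\nu(Y)\le\alpha_2$.

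For the lower bound, given any $(\epsilon,\kappa)$-cover $\{U_i\}$ of $Y$ with $U_i\in\mathcal U(\epsilon,\kappa,N^\star)$ and $N^\star\ge m(\epsilon,2\kappa)$, I would pick $x^*_i\in U_i\cap Y$ whenever the intersection is non-empty (discarding the other $U_i$). Arguing as in Remark \ref{rmk_center}, $U_i\subseteq U(N(U_i),x^*_i,2\kappa,\epsilon)$ since both sets are preimages of nested balls through the common inverse branch through $x^*_i$. The lower hypothesis applied to $x^*_i$ with parameter $2\kappa$ then yields $\nu(U_i\cap Y)\le\nu(U(N(U_i),x^*_i,2\kappa,\epsilon))\le\volume(U(N(U_i),x^*_i,2\kappa,\epsilon))^{\alpha_1}$. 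Summing $\nu(Y)\le\sum_i\nu(U_i\cap Y)$ and using the volume estimates of Lemma \ref{l:nu-U-compare}(2) to convert $\volume(U(N,x^*_i,2\kappa,\epsilon))^{\alpha_1}$ into a comparable quantity in $\volume(U_i)$ gives $\nu(Y)\le C\sum_i\volume(U_i)^{\alpha_1}$. Taking the infimum over covers and then the $\limsup_{\kappa\to 0}$ produces $\Lambda^\epsilon_{\alpha_1}(Y)\ge\nu(Y)/C>0$, whence $\VD^\epsilon_\nu(Y)\ge\alpha_1$.

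The main obstacle lies in the asymmetry between the hypothesis, which constrains $\nu(U)$ only for $U$ with center $z(U)\in Y$, and the definition of $\Lambda^\epsilon_\alpha(Y)$, which allows covering sets with arbitrary centers in $\pi(Z^\star_\nu(\epsilon))$. The replacement $U_i\rightsquigarrow U(N(U_i),x^*_i,2\kappa,\epsilon)$ reconciles the two, but the volume comparison via Lemma \ref{l:nu-U-compare} only gives estimates up to an $e^{O(N\epsilon)}$-factor arising from the non-conformality of $f$. Producing a genuinely $\kappa$-independent constant $C$ in the lower bound therefore requires careful bookkeeping, either by absorbing the $N$-dependent distortion into $\volume(U_i)$ at the cost of a slightly reduced exponent (sufficient for the application in Theorem \ref{thm_main} via the $\epsilon\to 0$ limit), or by routing the argument through the variant $\overline{\VD}^\epsilon_\nu$ of Remark \ref{rmk_center} and the relation $\overline{\VD}\le\ell(\epsilon)\VD$.
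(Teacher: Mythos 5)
Your proposal takes a genuinely different route from the paper's proof. The paper invokes Billingsley's measure-theoretic dimension criterion (Section 14 of Billingsley) to assert that the quantity $\alpha(Y,\nu) \defeq \inf\{\alpha : \lim_{N^\star}\inf_{\mathcal U'}\sum_{U\in\mathcal U'}\nu(U)^\alpha = 0\}$ equals $1$ whenever $\nu$ is non-atomic and $\nu(Y)>0$, and then converts the resulting $\nu$-weighted sums into volume-weighted ones using \eqref{e:assumption-young}. You instead construct explicit bounded-multiplicity covers by pushing forward under $f^{N^\star}$ and applying Besicovitch; the bounded multiplicity then replaces Billingsley's lemma for the upper bound. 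Both proofs, for the lower bound, boil down to the observation $\nu(Y)\le\sum_i\nu(U_i)\le\sum_i\volume(U_i)^{\alpha_1}$ applied to an arbitrary $(\epsilon,\kappa)$-cover.

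Your third paragraph correctly identifies the subtle point that the paper's proof glosses over: the hypothesis \eqref{e:assumption-young} only constrains $\nu(U)$ for $U=U(N,x,\kappa,\epsilon)$ with $z(U)=x\in Y$, while the infimum in the definition of $\Lambda^{\epsilon,\kappa}_\alpha(Y)$ ranges over covers centered anywhere in $\pi(Z^\star_\nu(\epsilon))$. The paper's line ``for every $U\in\mathcal U(\epsilon,\kappa,N^\star)$ we have $\volume(U)^{\alpha_2}\le\nu(U)$'' silently restricts to $U$ centered at $Y$; your recentering-and-enlarging device ($U_i\rightsquigarrow U(N(U_i),x_i^*,2\kappa,\epsilon)$, or equivalently the route through $\overline{\VD}$ in Remark \ref{rmk_center}) is the honest way to close the loop. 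Your upper bound is complete as sketched. For the lower bound you leave the $e^{O(N\epsilon)}$ distortion factor hanging; as you note, absorbing it costs only an $O(\epsilon)$-correction in the exponent (equivalently a factor $\ell(\epsilon)$), which is harmless for the application to Theorem \ref{thm_main} since there $\epsilon\to 0$. So the outcome is the same up to a constant tending to $1$; the paper's statement $\alpha_1\le\VD^\epsilon_\nu(Y)$ without any correction should really be read modulo that same $\ell(\epsilon)$-slack.
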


\begin{proof}
The proof of the proposition essentially follows the arguments in \cite[Proposition 2.1]{Young}.
Recall that,
for every $0<\kappa<r(\epsilon)$ and $N^\star\geq 0$,
the collection
$\mathcal U (\epsilon, \kappa, N^\star)$
is defined in \eqref{eq:def-U-ekN}
and
is a cover of $Y$; see Lemma \ref{l:cover}.
Define the quantity
$$\alpha (Y, \nu) \defeq
 \inf \Big\{\alpha : 
	\lim_{N^\star \to \infty}
	 \inf_{\mathcal U'} \sum_{ U\in \mathcal U'} \nu(U)^{\alpha} =0 \Big\},$$
 where the infimum is taken over all the sub-covers
  $\mathcal U' \subset \mathcal U( \epsilon, \kappa, N^\star)$ 
 of $Y$. 
Since
$\nu$ is non-atomic and
 $\nu(Y) > 0$, we have $\alpha(Y,
\nu) = 1$; see \cite[Section 14]{Billingsley}. We use here
 the fact that, for every fixed $0<\kappa<r(\epsilon)$, the sets $U(N,x ,\kappa, \epsilon)$ shrink
to $\{x\}$ as $N\to \infty$; see Lemma \ref{l:nu-U-compare}.

\medskip

Fix $\alpha > 1$,  $0<\kappa<r(\epsilon)$, $N^\star \geq m(\epsilon,\kappa)$, and $\eta>0$. Since $\alpha (Y, \nu)=1$, there exists a cover $\mathcal U_0 \subset  \mathcal U(\epsilon, \kappa, N^\star)$ of $Y$ such that 
$$\sum_{U \in \mathcal U_0} \nu(U)^{\alpha} < \eta.$$
By the assumption (\ref{e:assumption-young}) and the choice $N^\star \geq m (\epsilon,\kappa)$,
for every $U \in
	\mathcal U (\epsilon, \kappa, N^\star)$ we have $\volume(U)^{\alpha_2} \leq \nu(U)$.
Hence, we have
\[\sum_{U \in \mathcal U_0} \volume (U)^{\alpha_2 \alpha}\leq \sum_{U \in \mathcal U_0} \nu(U)^{\alpha} < \eta.\]
This shows the inequality
$\Lambda^{\epsilon,\kappa}_{\alpha_2\alpha} (Y) < \eta$ for any $0 < \kappa < r(\epsilon)$. Therefore, we have
 $\limsup_{ \kappa \to 0}\Lambda^{\epsilon,\kappa}_{\alpha_2\alpha} (Y) = \Lambda^{\epsilon}_{\alpha_2\alpha} (Y) < \eta$. Taking $\alpha \searrow 1$, we obtain the inequality
  $\VD_\nu^{\epsilon} (Y)\leq \alpha_2$. 

\medskip

For the other inequality, again by the assumption \eqref{e:assumption-young}, for all $0<\kappa<r(\epsilon)$
and $N^\star \geq m(\epsilon,\kappa)$
 we have $\volume(U)^{\alpha_1} \ge \nu(U)$ for every $U \in \mathcal
U(\epsilon, \kappa, N^\star)$.
Hence, for any cover $\mathcal U_0 \subset \mathcal U 
(\epsilon, \kappa, N^\star)$, 
we have
	\[
	\sum_{U\in \mathcal U_0 } \volume(U)^{\alpha_1}\geq \sum_{U \in \mathcal U_0} \nu(U)\geq \nu(Y).\]
Therefore, we have
 $\Lambda^{\epsilon,\kappa}_{\alpha_1}(Y) \ge \nu(Y)>0$ for any $0 < \kappa < r(\epsilon)$, which gives
  \[\Lambda^{\epsilon}_{\alpha_1}(Y) = \limsup_{ \kappa \to 0}\Lambda^{\epsilon,\kappa}_{\alpha_1}(Y)  \ge \nu(Y)>0.\]
  Hence, we have $\VD_\nu^\epsilon(Y) \ge \alpha_1$. The proof is complete.
\end{proof}

\begin{rmk}\label{r:young}
Let $\nu \in \mathcal{M}^+(f)$ be non-atomic.
Take $X \subseteq \pi(Z_\nu^\star)$ with $\nu (X)>0$. Setting $X^\epsilon \defeq X \cap \pi (Z^\star_\nu(\epsilon))$, assume that for every $0<\epsilon\ll \chi_{\min}$ and $0<\kappa<r(\epsilon)$ there exists $m=m(\epsilon, \kappa)$
and $\alpha_1^0, \alpha_2^0\in \mathbb R$
 such that
 \begin{equation}
\alpha_1  (\epsilon)\le \delta_x (\epsilon,\kappa, N) \le \alpha_2 (\epsilon)
	\quad
	\mbox{ for all } 
x \in X^\epsilon
	\mbox{ and } 
 N\geq m(\epsilon,\kappa)
\end{equation}
for some functions $\alpha_1 (\epsilon) = \alpha_1^0+ O(\epsilon)$ and
$\alpha_2 (\epsilon) = \alpha_2^0 + O(\epsilon)$.
Applying Proposition \ref{prop_2.1} to $X^\epsilon$ instead of $Y$ we see that, for every $0<\epsilon\ll \chi_{\min}$, we have $
\alpha_1 (\epsilon) \leq \VD_\nu^\epsilon (X^\epsilon) \leq \alpha_2 (\epsilon)
$,
which gives 
$\alpha_1^0 \leq \VD_\nu (X) \leq \alpha_2^0$.
\end{rmk}

\section{Proofs of Theorems \ref{thm_main}, \ref{thm_main_equalities_general}, and \ref{thm_main_equalities_hyp}} \label{sec_proof1.3}
In this section, 
$f\colon\mathbb P^k\to \mathbb P^k$ is a holomorphic endomorphism of algebraic degree $d\geq 2$.

\subsection{Proof of Theorem \ref{thm_main}} \label{ss:1.1}
For $\nu \in \mathcal M^+(f)$, recall that $Z_\nu$ is defined in Definition \ref{d:Z} and for every $0<\epsilon\ll \chi_{\min}$ (where $\chi_{\min}>0$ is the smallest Lyapunov exponent of $\nu$), the set $Z^\star_\nu (\epsilon)$ is defined in \eqref{d:Zstar}.

\medskip

Assume first that $\nu$ is atomic.
Since $\nu$ is ergodic, it gives mass only to a finite number of points,. hence it
 satisfies 
$h_\nu(f) = 0$. 
It also follows from the Definition 
\ref{d:vd-nu} 
of $\VD (\nu)$ that 
 $\VD(\nu) = 0$, since the support $S_\nu$ of $\nu$ satisfies 
 $\nu(S_\nu)=1$ and $\VD_\nu (S_\nu)=0$, being finite.
  Therefore, the conclusion follows in this case.
 
  \medskip

We can then assume that  $\nu$ is non-atomic. By Theorem \ref{thm_11.4.2} and Proposition \ref{prop_2.1}, for every $0<\epsilon\ll \chi_{\min}$
	we have 
	$$
	\VD^{\epsilon}_\nu(\pi(Z^\star_\nu(\epsilon)))\leq
	 \frac{h_{\nu}(f)}{2L_{\nu}(f)} + c\epsilon,
$$
	where
	 the constant $c$ is independent of $\epsilon$.
	By Definition \ref{d:vd}, taking $\epsilon \searrow 0$, we obtain the inequality 
	$\VD_\nu (\pi(Z_\nu)) \le (2L_{\nu}(f))^{-1} h_{\nu}(f)$. 
	As $\nu (\pi(Z_\nu))=1$,
	by Definition  \ref{d:vd-nu}, we deduce the inequality
	$\VD(\nu) \le (2L_{\nu}(f))^{-1}  h_{\nu}(f)$. 

\medskip

	In order to prove
	the reversed inequality, let
	$Y_0 \subseteq \pi(Z^\star_\nu)$ be such that
	$\nu(Y_0) = 1$.
	For any $0<\epsilon\ll \chi_{\min}$, 
	applying Proposition 
	\ref{prop_2.1} to $Y_0 \cap \pi(Z^\star_\nu(\epsilon))$, 
	we deduce from Theorem \ref{thm_11.4.2}  that
	\[\VD^{\epsilon}_\nu(Y_0 \cap \pi(Z^\star_\nu(\epsilon))) \geq 
	 \frac{h_{\nu}(f)}{ 2L_{\nu}(f)}
	- c \epsilon,
	\]
	where again the constant $c$ is independent of $\epsilon$; see also
	Remark \ref{r:young}.
	By
	Definition \ref{d:vd},
	we have the inequality
	$\VD_\nu(Y_0) \ge (2 L_{\nu}(f))^{-1}h_{\nu}(f)$.
	As $Y_0$ is arbitrary, 
	it follows from Definition \ref{d:vd-nu} 
	that
	$\VD(\nu) 
	\ge (2L_{\nu}(f))^{-1}h_{\nu}(f).$
	The proof of Theorem \ref{thm_main} is complete.

\begin{rmk} \label{rmk_5.1}
Let $\nu \in \mathcal{M}^+(f)$ be non-atomic and take $X \subseteq \pi(Z_\nu^\star)$
 with $\nu(X) > 0$.
 By Remark \ref{r:young} and with similar arguments as in the proof of
  Theorem \ref{thm_11.4.2},
  it follows that the $\limsup_{\epsilon \to 0}$ 
in Definition \ref{d:vd} is actually a limit.
\end{rmk}

\subsection{Proof of Theorem \ref{thm_main_equalities_general}}\label{ss:1.2}

Let $X \subseteq \bbP^k$ be a closed $f$-invariant set. Define
\begin{equation}\label{e:def-dd-hyd}
		\DD^+_X (f)  \defeq \sup  \big\{ \VD (\nu) \colon \nu \in  \mathcal{M}_X^+ (f) \big\}
\end{equation}
and recall that $\delta_X(f)$, $P^+_X(t)$,
 and  $p^+_X(f)$ are defined in Sections \ref{ss:volume-conformal} and
 \ref{ss:pressure-exp}.
Theorem \ref{thm_main_equalities_general} follows from the following proposition applied with $X=J(f)$.

\begin{prop}\label{prop_zero}
We have $p_X^+(f) = 
2\DD_X^+(f)$.  In particular, the set
$\big\{t\colon  P_X^+(t)=0\big\}$ 
is non-empty.
\end{prop}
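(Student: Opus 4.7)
The proposition is essentially an immediate corollary of Theorem \ref{thm_main}, with the only subtlety being the existence of a zero of $P_X^+$. The plan is to use the Mañé--Manning identity to factor each $P_\nu(t)$ and read off the sign of $P_X^+(t)$ from its value.

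The starting point is the observation that, for every $\nu\in\mathcal M_X^+(f)$, Theorem \ref{thm_main} gives
\[
P_\nu(t) \defeq h_\nu(f)-tL_\nu(f) = L_\nu(f)\bigl(2\VD(\nu) - t\bigr),
\]
and $L_\nu(f)>0$ by definition of $\mathcal M_X^+(f)$. Hence the sign of $P_\nu(t)$ coincides with the sign of $2\VD(\nu)-t$. Taking the supremum over $\nu\in\mathcal M_X^+(f)$:

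If $t\geq 2\DD_X^+(f)$, then for every $\nu\in\mathcal M_X^+(f)$ we have $2\VD(\nu)\leq 2\DD_X^+(f)\leq t$, so $P_\nu(t)\leq 0$ and therefore $P_X^+(t)\leq 0$. Conversely, if $t<2\DD_X^+(f)$, then by the definition of $\DD_X^+(f)$ as a supremum there exists $\nu\in\mathcal M_X^+(f)$ with $2\VD(\nu)>t$, which yields $P_\nu(t)>0$ and thus $P_X^+(t)>0$. Combining these two observations gives
\[
\inf\{t\colon P_X^+(t)\leq 0\} = 2\DD_X^+(f).
\]

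It remains to show that $P_X^+$ actually vanishes at $t_0\defeq 2\DD_X^+(f)$, so that the set $\{t\colon P_X^+(t)=0\}$ is non-empty and has the same infimum. For this, pick a sequence $\nu_n\in\mathcal M_X^+(f)$ with $\VD(\nu_n)\nearrow \DD_X^+(f)$. Since $|\Jac f|$ is bounded on $\mathbb P^k$, the quantities $L_{\nu_n}(f)$ are uniformly bounded from above, so
\[
P_{\nu_n}(t_0) = L_{\nu_n}(f)\bigl(2\VD(\nu_n)-2\DD_X^+(f)\bigr)\longrightarrow 0
\]
from below, which combined with $P_X^+(t_0)\leq 0$ gives $P_X^+(t_0)=0$. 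Since $P_X^+(t)>0$ for $t<t_0$, this value $t_0$ is indeed the infimum of the zero set. The only mild point to check is the non-emptiness of $\mathcal M_X^+(f)$, which for $X=J(f)$ (the case needed in Theorem \ref{thm_main_equalities_general}) is supplied by the measure of maximal entropy. No serious obstacle arises; the entire argument is a one-line rearrangement of Theorem \ref{thm_main} together with the standard continuity of the convex non-increasing function $P_X^+$ from Lemma \ref{lemma_pressure}.
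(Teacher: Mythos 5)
Your proof is correct and rests, like the paper's, on a one-line rearrangement of Theorem \ref{thm_main} to convert $\VD(\nu)$ into the ratio $h_\nu(f)/(2L_\nu(f))$; the first part (the two inequalities giving $\inf\{t: P_X^+(t)\le 0\}=2\DD_X^+(f)$) is essentially identical to the paper's argument. The one place where you diverge is the final step establishing non-emptiness of the zero set: the paper deduces it somewhat indirectly from Lemma \ref{lemma_pressure} (convexity, monotonicity, and finiteness of $P_X^+$, together with the bound $p_X^+(f)\le 2$ from Lemma \ref{lem_vdbdd}), whereas you compute $P_X^+(2\DD_X^+(f))=0$ directly by taking a maximizing sequence $\nu_n$ and using the uniform upper bound $L_{\nu_n}(f)\le\log\sup_{\bbP^k}|\Jac f|<\infty$ to squeeze $P_{\nu_n}(2\DD_X^+(f))$ to zero from below. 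Your variant is slightly more elementary and avoids any reliance on which of the two (not quite identical) definitions of $p_X^+(f)$ appearing in the paper one uses; both arguments are sound. Your parenthetical caveat about the non-emptiness of $\mathcal M_X^+(f)$ is a reasonable observation — the paper's Lemma \ref{lemma_pressure} makes the same standing assumption, and for $X=J(f)$ it holds since the measure of maximal entropy has all exponents at least $\tfrac12\log d>0$.
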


\begin{proof}
We first prove the inequality
$p_X^+(f) \ge 2\DD_X^+(f)$. We can assume that $\DD_X^+ (f)>0$.
Fix
 $0<t<2\DD_X^+(f)$.
 By the definition 
 \eqref{e:def-dd-hyd}
 of $\DD_X^+(f)$, there exists $\nu \in \mathcal M^+_X(f)$
  such that $\VD (\nu)>t/2$. Since $\VD (\nu) = (2L_\nu(f))^{-1} h_\nu(f)$
   by Theorem
\ref{thm_main}, we have $h_\nu(f) / L_\nu(f) > t$. It follows that
$h_\nu(f) - t L_\nu(f) >0$; that is, $P_X^+(t)>0$. Therefore we have $p_X^+(f)>t$.
Since $t$ is arbitrary, we obtain $p_X^+(f) \ge 2\DD_X^+(f)$.

\medskip

Let us now prove that $2\DD_X^+(f) \geq p_X^+(f)$. Suppose that $2\DD_X^+(f) < p_X^+(f)$.
Then there exists $t \in (2\DD_X^+(f), p_X^+(f))$ such that $P_X^+(t) > 0$. In particular,
there exists a measure $\nu\in \mathcal{M}^+_X (f)$ with $h_\nu(f) - t L_\nu(f) >0$.
We deduce from Theorem \ref{thm_main}
that
\[
\VD (\nu) = \frac{h_\nu(f)}{2L_\nu(f)} > \frac{t}{2} > \DD_X^+(f).
\]
This contradicts the definition of $\DD_X^+(f)$. Hence, we have $2\DD_X^+(f)\geq p_X^+(f)$.

\medskip
By Lemma \ref{lem_vdbdd}, 
we have $\DD_X^+(f) \le 1$.
Since the function
$t \mapsto P_X^+(t)$ is convex and non-increasing, the equality 
$p_X^+(f) = 
2\DD_X^+(f) \le 2$ implies that the set $\{t\colon  P_X^+(t)=0\}$ is non-empty. The proof is complete.
\end{proof}

\begin{rmk}
One can also define 
\begin{equation*}
	\begin{aligned}
		\DD^+_e (f) & \defeq \sup  \{ \VD (\nu) \colon \nu \in  \mathcal{M}_e^+ (f) \},\\
		P^+_e (t) & \defeq \sup\{h_\nu(f) - t L_{\nu} (f)\colon \nu \in \mathcal{M}_e^+ (f)\}, \quad \mbox{ and }\\
		p^+_e(f) &  \defeq \inf \{t : P^+_e(f) \le 0 \},
	\end{aligned}
\end{equation*}
where we recall that $\mathcal M^+_e (f)$
is  the set of ergodic $f$-invariant measures whose measure-theoretic entropy is strictly larger than $(k-1)\log d$.
Since $\mathcal M_e^+(f)\subseteq \mathcal M^+ (f)$
for every $f$ 
\cite{DeThelin08,Dupont12}, Theorem \ref{thm_main} applies in particular to every $\nu \in \mathcal M^+_e (f)$.
The same proof as Proposition \ref{prop_zero} gives $p_e^+(f) = 
2\DD_e^+(f)$. 
\end{rmk}

\subsection{Proof of Theorem \ref{thm_main_equalities_hyp}}
\label{ss:1.3}

Recall that if $X \subset \bbP^k$ is a 
uniformly expanding 
closed invariant  set for $f$,
the volume dimension $\VD(X)$ is defined as $\VD(X) \defeq \sup_{\nu \in \mathcal{M}_X^+(f)} \VD_{\nu}(X)$;
see Definition \ref{d:vd-hyp}.
\begin{prop} \label{prop_confmeasure2}
Let $X\subseteq \mathbb P^k$
 be a 
 uniformly expanding
 closed invariant  set for $f$
 containing a dense orbit.
We have $\delta_X(f) \ge 2\VD (X)$. In particular, if $f$ is hyperbolic, we have $\delta_J (f)\geq 2\VD (J(f))$.
\end{prop}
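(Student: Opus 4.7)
The strategy, in the spirit of the Bowen--Ruelle formula, is to show that the existence of a $t$-volume-conformal probability measure $\mu$ on $X$ forces $\VD(X)\leq t/2$. Taking the infimum over admissible $t$ then yields $2\VD(X)\leq \delta_X(f)$. The ``in particular'' claim follows because, for $f$ hyperbolic, $J(f)$ is uniformly expanding and (by a classical result) $f|_{J(f)}$ is topologically exact, hence contains a dense orbit.

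Fix such a $\mu$ and pick an arbitrary $\nu\in\mathcal M_X^+(f)$ with smallest Lyapunov exponent $\chi_{\min}$; by Remark \ref{rmk:s4-hyperbolic} I may assume $\pi(Z^\star_\nu(\epsilon))=X$ for every $0<\epsilon\ll\chi_{\min}$. The crucial and most delicate step is to establish a polynomial lower bound $m_-(\mu,r)\geq c_0 r^\gamma$ for some $c_0,\gamma>0$ and all sufficiently small $r$. I would exploit uniform expansion: for each $x\in X$ and small $r$, choose $n(r)\asymp \log(1/r)$ so that $f^{n(r)}$ is injective on $B(x,r)$ and $f^{n(r)}(B(x,r))\supseteq B(f^{n(r)}(x),R)$ for a fixed scale $R>0$. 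Since dense orbit and uniform expansion together imply $f|_X$ topologically exact, Lemma \ref{l:conformal} gives $\mathrm{supp}(\mu)=X$, and the compactness of $X$ yields a uniform lower bound $\mu(B(y,R))\geq c>0$ for all $y\in X$. Combining $t$-volume-conformality of $\mu$ with the bound $|\Jac f^{n(r)}|\leq e^{Mk\,n(r)}$ coming from \eqref{eq:def-M}, I conclude $\mu(B(x,r))\geq c\,e^{-Mkt\,n(r)}\geq c_0 r^\gamma$.

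For $0<\epsilon\ll \chi_{\min}$, $0<\kappa<r(\epsilon)$, and $N^\star\geq n(\epsilon)$, I construct an $(\epsilon,\kappa)$-cover $\{U_i\}\subseteq \mathcal U(\epsilon,\kappa,N^\star)$ of $X$ by adapting the proof of Lemma \ref{lem_vdbdd-epsilon}: apply Theorem \ref{thm_Besicovitch} to $f^{N^\star}(X)$ with radius $\kappa\,e^{-N^\star M\epsilon}$ and pull back via the inverse branches provided by Lemma \ref{l:nu-U-compare}; the bounded Besicovitch multiplicity gives $\sum_i \mu(U_i)\leq b(2k)$. Combining this with the lower bound in Lemma \ref{l:mcm-conformal} and the estimate on $m_-$ yields
\[
\sum_i \volume(U_i)^{t/2}\leq C_1\,\kappa^{tk-\gamma}\exp\!\bigl(N^\star\epsilon[tk(5M+2)+M\gamma]\bigr).
\]
For $\alpha>t/2$, inserting $\volume(U_i)\leq \kappa^{2k} e^{-2N^\star(L_\nu-k\epsilon)}$ from Lemma \ref{l:nu-U-compare}(2) yields
\[
\sum_i \volume(U_i)^\alpha \leq C_2\,\kappa^{2k\alpha-\gamma}\exp\!\Bigl(-N^\star\bigl[2(L_\nu-k\epsilon)(\alpha-t/2)-\epsilon\bigl(tk(5M+2)+M\gamma\bigr)\bigr]\Bigr).
\]
When $\alpha>t/2+O(\epsilon)$ the bracket is strictly positive, so the sum vanishes as $N^\star\to\infty$; hence $\Lambda^{\epsilon,\kappa}_\alpha(X)=0$, and by Definition \ref{d:vd-X} together with Lemma \ref{lem:vdek-vde} one obtains $\VD^\epsilon_\nu(X)\leq t/2+O(\epsilon)$. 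Letting $\epsilon\to 0$ via Corollary \ref{cor_limit} gives $\VD_\nu(X)\leq t/2$, and since $\nu$ was arbitrary, Definition \ref{d:vd-hyp} yields $\VD(X)\leq t/2$. The main obstacle is precisely the polynomial lower bound on $m_-$: without it, the factor $1/m_-(\mu,\kappa e^{-MN^\star\epsilon})$ in the key estimate could grow faster in $N^\star$ than the exponential gain from $\alpha>t/2$, and the whole argument would collapse.
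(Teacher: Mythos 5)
Your proof is correct in substance, but it takes a genuinely different route from the paper's. The paper applies the Billingsley-type argument underlying Proposition~\ref{prop_2.1} directly to the conformal measure $\mu$: for $\alpha>1$ it produces, for each $N^\star$, a cover $\{U_i\}\subset\mathcal U(\epsilon,\kappa,N^\star)$ with $\sum_i\mu(U_i)^\alpha<\eta$ (so the cover may have sets of \emph{varying} depth $N(U_i)\geq N^\star$), and then invokes Lemma~\ref{l:mcm-conformal} to pass from $\sum\mu(U_i)^\alpha$ to $\sum\volume(U_i)^{t\alpha/2}$, picking up a coefficient that depends on the maximal depth $N^+=\max_i N(U_i)$ through both $e^{tN^+k(5M+2)\epsilon}$ and $m_-(\mu,\kappa\,e^{-N^+M\epsilon})^{-\alpha}$, and concludes $\Lambda^{\epsilon_0,\kappa}_{t\alpha/2}(X)<\infty$. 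You instead adapt the Besicovitch construction from Lemma~\ref{lem_vdbdd-epsilon} to produce covers of a \emph{single} fixed depth $N^\star$ (yielding $\sum_i\mu(U_i)\leq b(2k)$ with $\alpha=1$), and you supply the missing quantitative ingredient: a polynomial lower bound $m_-(\mu,r)\geq c_0 r^\gamma$ obtained from uniform expansion and $t$-volume-conformality via the standard ``push a small ball to definite scale'' argument. This makes the $N^\star$-dependence of all constants manifest and lets you close the estimate by trading the polynomial loss for the exponential gain $e^{-2N^\star(L_\nu-k\epsilon)(\alpha-t/2)}$.

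You correctly flag the main delicacy: since $\Lambda^{\epsilon,\kappa}_\alpha(X)$ is a $\sup$ over $N^\star$ of infima over covers with depths $\geq N^\star$, any bound that depends on $N^+$ must be controlled uniformly as $N^\star\to\infty$, and without the polynomial estimate on $m_-$ the factor $1/m_-(\mu,\kappa e^{-N^+M\epsilon})^\alpha$ could a priori overwhelm the small $\eta$. The paper's proof, as written, does not verify this uniformity (it fixes a single $N^\star$ and derives a finite bound for that cover), so your extra step is not redundant padding but rather makes the argument airtight; in this sense your version is more explicit and arguably more careful. One shared soft spot, which your proof inherits from the paper rather than introduces: both use Lemma~\ref{l:conformal} to conclude $\mathrm{Supp}\,\mu=X$ from the dense-orbit hypothesis, which requires $f|_X$ to be topologically exact, and ``dense orbit plus uniform expansion'' implies only topological transitivity, not exactness, in full generality (there may be a periodic cycle of pieces permuted by $f$). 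This is harmless for $X=J(f)$ but would require a spectral-decomposition remark for general $X$; since the paper makes the same tacit assumption, it is not a gap specific to your argument.
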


\begin{proof}
We can assume that a volume-conformal measure on $X$ exists, otherwise
we have $\delta_X (f)=+\infty$ and the assertion is trivial.
Let $\mu$ be a $t$-volume-conformal measure on $X$, for some
 $t \ge \delta_X(f)$. Since $X$ contains a dense orbit, we have $\text{Supp } \mu = X$.
It suffices to prove
the inequality
 $t  \geq 2\VD_{\nu}(X)$ for any
measure $\nu \in \mathcal{M}_X^+(f)$.
By Definition \ref{d:vd-hyp}, this implies that
 $t \geq 2\VD(X)$
and the conclusion follows by taking the infimum over $t$ as above.

\medskip

Fix $\nu \in \mathcal{M}_X^+(f)$. We can assume that $\VD_\nu (X)>0$,
since otherwise the assertion is trivial. In particular, recalling that all measures in $\mathcal M^+(f)$
are ergodic, we can assume that $\nu$ is non-atomic. Fix  a constant $\gamma>1$.
Since $\VD_\nu (X)=\limsup_{\epsilon \to 0} \VD_\nu^\epsilon (X^\epsilon)$ by Definition \ref{d:vd}
and $\VD_\nu (X)\leq 1$ by Lemma \ref{lem_vdbdd}, we can fix $\epsilon_0=\epsilon_0 (\gamma)$
 such that $\VD_\nu (X)\leq \gamma \VD_\nu^{\epsilon_0} (X^{\epsilon_0})$.
As we can assume that $\epsilon_0(\gamma)\to 0$ as $\gamma\to 1$,
  it is 
enough to prove that $2\VD_\nu^{\epsilon_0}(X^{\epsilon_0})\leq t { \ell(\epsilon_0)}$,
where $\ell(\epsilon)$ is as in \eqref{eq:def-ell}.

\medskip

By Remark \ref{rmk:s4-hyperbolic},
 for every $0<\epsilon\ll \chi_{\min}$
we have $X^\epsilon = X$.
 In particular, 
$X^{\epsilon_0}= X$ is compact.
Fix $\alpha>1$. As in the proof of Proposition \ref{prop_2.1}, since $\mu(X)>0$, for every
 $\eta > 0$ and $0<\kappa<r(\epsilon)$
  there exists an $N^\star$ (depending on $\eta$ and $\kappa$) large enough
  and
   a cover $\{U_i\}_{i\geq 1} \subset 
   \mathcal U (\epsilon, \kappa, N^\star)$
of $X^{\epsilon_0}=X$ satisfying
\begin{equation}\label{eq:sum-mu}
\sum_{i}\mu(U_i)^{\alpha} \leq \eta. 
\end{equation}
As $X^{\epsilon_0}=X$ 
is compact, we can assume that the cover $\{U_i\}$ is finite.
By Lemma \ref{l:mcm-conformal}, 
we have
\begin{equation}\label{eq:sum-vol}
\sum_{i} \volume (U_i)^{t\alpha/2}  \leq 
 \sum_{i} 
 \frac{ C^{t\alpha}\kappa^{tk\alpha} e^{\alpha tN(U_i)k\epsilon(5M+2)}}
{ m_- (\mu, \kappa \, e^{-N(U_i) M \epsilon})^\alpha}
 \mu(U_i)^{\alpha}\\
 \leq \frac{ C^{t\alpha}\kappa^{tk\alpha} e^{\alpha t N^+ k\epsilon(5M+2)}}
{ m_- (\mu, \kappa \, e^{-N^+ M \epsilon})^\alpha}
\sum_{i} 
\mu(U_i)^{\alpha},
\end{equation}
where $m_->0$ is as in Lemma \ref{l:conformal}, $C<\infty$ 
is as in Lemma \ref{l:mcm-conformal},
  and $N^+$ is the maximum of 
  the $N(U_i)$ 
(we use here that the cover $\{U_i\}$ is finite).
We deduce from  \eqref{eq:sum-mu} and \eqref{eq:sum-vol}
that $$\sum_{i} \volume (U_i)^{t\alpha/2} < \frac{ C^{t\alpha}\kappa^{tk\alpha} e^{\alpha t N^+ k\epsilon(5M+2)}}
{ m_- (\mu, \kappa \, e^{-N^+ M \epsilon})^\alpha} \eta < \infty.$$ 
This implies that $\VD^{\epsilon_0, \kappa}_\nu (X^{\epsilon_0})\leq t \alpha/2$ for all $\alpha>1$. 
Taking $\alpha \searrow 1$, we have $\VD^{\epsilon_0, \kappa}_\nu (X^{\epsilon_0})\leq t/2$
for all $0<\kappa<r(\epsilon)$. By Lemma \ref{lem:vdek-vde},
we have $\VD^{\epsilon_0}_\nu (X^{\epsilon_0})\leq \ell(\epsilon_0) t/2$.
The proof is complete.
\end{proof}

The following result implies Theorem \ref{thm_main_equalities_hyp} by taking $X=J(f)$
if $f$ is hyperbolic (since $J(f)$ has dense orbits).
 
\begin{thm}\label{t:hyp-subset}
 Let $f$ be an endomorphism of $\mathbb P^k$ of algebraic degree $d\geq 2$
 and 
 $X\subseteq \bbP^k$ a closed invariant uniformly expanding set containing a dense orbit. Then
 \[
\delta_X(f)
= p^+_{X}(f) = 2\VD (X)
 \]
 and there exists a unique
  invariant probability
 measure $\mu_X$ supported on $X$ 
 and such that $\VD (\mu_X)= \VD (X)$.
 \end{thm}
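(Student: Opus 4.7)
The plan is to combine the results already proved with the thermodynamic formalism for uniformly expanding holomorphic maps. The first step is to observe that, by Remark \ref{rmk:s4-hyperbolic}, for every $\nu \in \mathcal{M}_X^+(f)$ we have $\pi(Z_\nu^\star) = X$, so the set $X$ itself is a full-measure subset of $\pi(Z_\nu^\star)$. Inspecting the proof of Theorem \ref{thm_main} in Section \ref{ss:1.1}, one sees that the inequality $\VD_\nu(Y) \ge h_\nu(f)/(2L_\nu(f))$ was established for every set $Y$ of full $\nu$-measure in $\pi(Z_\nu^\star)$, while the upper bound $\VD_\nu^\epsilon(\pi(Z_\nu^\star(\epsilon))) \le h_\nu(f)/(2L_\nu(f)) + c\epsilon$ was established for the whole slice. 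Together these give $\VD_\nu(X) = h_\nu(f)/(2L_\nu(f)) = \VD(\nu)$ for every $\nu \in \mathcal{M}_X^+(f)$. Taking suprema, $\DD_X^+(f) = \VD(X)$, and Proposition \ref{prop_zero} then yields $p_X^+(f) = 2\VD(X)$.

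Next I would establish the reverse inequality to Proposition \ref{prop_confmeasure2}, namely $\delta_X(f) \le p_X^+(f)$, by producing a $t$-volume-conformal measure on $X$ at the critical exponent $t = p_X^+(f)$. Since $f|_X$ is uniformly expanding and topologically transitive (it has a dense orbit), the potential $\varphi_t \defeq -t \log|\Jac f|$ is Hölder continuous on $X$, so the standard thermodynamic formalism for open distance-expanding maps (Ruelle--Perron--Frobenius, see e.g.\ the treatment in \cite{PU} for $k=1$ and its well-known extensions to uniformly expanding repellers in any dimension) provides a unique equilibrium state $\mu_t$ for $\varphi_t$ and a $t$-volume-conformal probability measure $m_t$ on $X$ obtained as an eigenmeasure of the dual transfer operator $\mathcal{L}_t^\ast$. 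By the variational principle and the fact that every invariant measure on a uniformly expanding $X$ lies in $\mathcal{M}_X^+(f)$, the topological pressure of $\varphi_t$ coincides with $P_X^+(t)$; choosing $t = p_X^+(f)$ makes this pressure vanish. The existence of $m_t$ then gives $\delta_X(f) \le p_X^+(f)$, and combined with Proposition \ref{prop_confmeasure2} we obtain the full chain of equalities $\delta_X(f) = p_X^+(f) = 2\VD(X)$.

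For uniqueness of a measure realizing $\VD(\mu_X) = \VD(X)$, suppose $\nu \in \mathcal{M}_X^+(f)$ satisfies $\VD(\nu) = \VD(X)$. By Theorem \ref{thm_main} and the first step, this is equivalent to $h_\nu(f) - p_X^+(f) \, L_\nu(f) = 0 = P_X^+(p_X^+(f))$, so $\nu$ is an equilibrium state for the potential $\varphi_{p_X^+(f)}$. The uniqueness of equilibrium states for Hölder potentials over uniformly expanding systems then forces $\nu = \mu_{p_X^+(f)}$, which we take as $\mu_X$.

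The main obstacle is the second step: although it reduces to standard thermodynamic formalism, one has to check that the black-box statements apply cleanly to our setting. The delicate points are that $|\Jac f|$ may vanish on the postcritical locus in $\mathbb P^k$, but uniform expansion on $X$ combined with $\nu(C(f)) = 0$ for every $\nu \in \mathcal{M}_X^+(f)$ keeps $\varphi_t$ Hölder on $X$; that one must verify the relevant eigenmeasure is \emph{volume}-conformal in the sense of Definition \ref{d:conformal} (this follows from the change-of-variables identity $\mathcal{L}_t^\ast m_t = m_t$ together with injectivity of $f$ on small enough sets in $X \setminus C(f)$); and that $P_{\mathrm{top}}(\varphi_t) = P_X^+(t)$, which follows from the variational principle once one notes that on a uniformly expanding $X$ every ergodic invariant probability measure lies in $\mathcal{M}_X^+(f)$. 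These verifications are routine adaptations of the one-dimensional arguments in \cite{DU1,DU2,PU,McMullen}.
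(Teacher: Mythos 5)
Your proposal is correct, and it takes a mildly different route from the paper's. You first establish directly that $\VD(\nu) = \VD_\nu(X)$ for every $\nu \in \mathcal{M}_X^+(f)$ (using Remark \ref{rmk:s4-hyperbolic} to identify $\pi(Z_\nu^\star(\epsilon)) = X$ for all $\epsilon$, so that the upper bound from Theorem \ref{thm_11.4.2} and Proposition \ref{prop_2.1} applies to $X$ itself and not merely to some full-measure subset); this gives $\DD_X^+(f) = \VD(X)$, and Proposition \ref{prop_zero} then yields $p_X^+(f) = 2\VD(X)$ outright. The paper instead avoids proving $\VD(\nu) = \VD_\nu(X)$ and only needs the one-sided inequality $\VD(\mu_X) \le \VD(X)$: it introduces the equilibrium state $\mu_{t_0}$ for $t_0 = p_X^+(f)$ up front, computes $2\VD(\mu_{t_0}) = p_X^+(f)$ by Theorem \ref{thm_main}, and then closes the circle $p_X^+(f) = 2\VD(\mu_{t_0}) \le 2\VD(X) \le \delta_X(f) \le p_X^+(f)$ using Proposition \ref{prop_confmeasure2} and a Patterson--Sullivan construction of a $t^\star$-volume-conformal measure with $t^\star \le p_X^+(f)$. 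Your construction of the conformal measure is also different in flavor: you take an eigenmeasure of the dual transfer operator $\mathcal{L}_t^\ast$ at the critical parameter (where the top eigenvalue is $1$ because the pressure vanishes), whereas the paper does a Patterson--Sullivan limiting argument along the orbit $f|_X^{-m}(x)$ and only needs the inequality $c(t) \le P_X(t)$ rather than equality of pressures, which is a touch more economical. Your uniqueness argument (any $\nu$ achieving $\VD(\nu) = \VD(X)$ must be the equilibrium state for $-p_X^+(f)\log|\Jac f|$, which is unique) is the intended one and is actually spelled out more explicitly than in the paper, which leaves it implicit in the identification $\mu_X = \mu_{t_0}$. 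Both approaches are valid; yours makes the structural equality $\VD(\nu) = \VD_\nu(X)$ on expanding sets explicit, at the cost of invoking slightly heavier RPF machinery, while the paper's circular-chain organization minimizes what must be checked in the thermodynamic formalism.
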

 
 \begin{proof}
It follows from the general theory
 of thermodynamic formalism for uniformly expanding systems 
 (see for instance \cite{Bowen75} and \cite[Chapters 3 and 6]{PU})
that,
 for every $t\in \mathbb R$,
 there exists a
 unique
 invariant probability 
 measure $\mu_t$  on $X$
 maximizing the pressure function $P_X(t)= \sup_{\nu} \{ h_\nu(f) - t L_\nu(f)\}$, where
 the supremum is taken over all invariant measures $\nu$ on $X$. 
 We used here the fact that, since $X$ is uniformly expanding,
 the function $-t\log |\Jac f|$
 is H\"older continuous on $X$.
 
 Let $\mu_X$ be the invariant measure
 $\mu_{t_0}$ associated to $t_0= p^+_X(f)$.
 As $h_{\mu_X}(f) = p^+_X(f) L_{\mu_X}(f)$,
 it follows from Theorem \ref{thm_main} 
 that we have $p^+_X(f)=2\VD (\mu_X)$. Since  $\VD (\mu_X) \le \VD_{\mu_X}(X)$
 by Definition \ref{d:vd-nu} 
 and $\VD(X) \defeq \sup_{\nu \in \mathcal{M}_X^+(f)} \VD_{\nu}(X)$
 by Definition \ref{d:vd-hyp},
 we have  
 \begin{equation*}\label{eq_VDX}
 \VD (\mu_X)\leq \VD(X).
 \end{equation*}
We deduce from the above and Proposition \ref{prop_confmeasure2}
 that
 \[
 p^+_X(f) = 2\VD (\mu_X) \leq 2\VD(X) \leq \delta_X(f).
 \]
 To complete the proof, we prove the inequality
 $\delta_X(f) \leq p^+_X(f)$ by constructing a $t^\star$-volume conformal measure on $X$ for some $t^\star \leq p^+_X(f)$. 
 Since one can follow Patterson's \cite{Patterson76} and 
 Sullivan's \cite{Sullivan83} constructions of conformal measures, we only sketch the proof and refer to
 those papers
for more details; see also \cite[Sections 12.1 and 12.3]{PU}. 
 
 \medskip
 
Take $x \in X$. For each $m \ge 0$, set
$$E_m \defeq f|_{X}^{-m}(x).$$
Then $E_m$ is finite and $E_{m+1} = f|_{X}^{-1}(E_{m})$. For all $t \ge 0$,
consider the sequence $\{a_m(t)\}_{m \ge 1}$ given by
$$a_m(t) \defeq \log \Big(\sum_{x \in E_m}e^{S_m\phi_t(x)} \Big)$$
where $\phi_t(x) \defeq -t\log|\Jac f(x)|$ 
and $S_m\phi_t(x) \defeq \sum_{j=0}^{m-1} (\phi_t \circ f^j)(x)$. 
Let $c(t)$ be 
defined as
$$c(t) \defeq \limsup_{m \to \infty} \frac{a_m(t)}{m}.$$ 
As 
a consequence of the expansiveness of $f|_{X}$
one can prove that
\begin{equation}\label{e:cP}
c(t) \le P_X(t) \quad  \mbox{ for all } t \ge 0;
\end{equation}
see for instance \cite[Lemmas 12.2.3 and 12.2.4]{PU}. 
Moreover, the function $t \mapsto c(t)$ is continuous.
Setting $$t^\star \defeq \inf \{t \ge 0 :  c(t) \le 0 \},$$
it follows from \eqref{e:cP} that
$t^\star \le p_X^+(f)<\infty$.

\medskip

By \cite[Lemma 12.1.2]{PU},
 there exists a sequence $\{b_m\}_{m\geq 1}$ of positive real numbers such that
  the quantity
\[
M_s \defeq \sum_{m=1}^\infty b_m e^{a_m - ms}
\]
satisfies 
$M_s <\infty$ for $s>t^\star$ and $M_s=+\infty$ for $s\leq t^\star$.
For $s>t^\star$ consider the measure $\nu_s$ defined as
\[\nu_s \defeq \frac{1}{M_s}\sum_{m=1}^\infty \sum_{x\in E_m} b_m e^{a_m - ms}\delta_x.\]
One can check that any 
weak limit of the measures
$\nu_s$ as $s \searrow t^\star$ is
$t^\star$-volume-conformal; see for instance \cite[Section 12.1]{PU}.
The assertion follows.
 \end{proof}


\printbibliography
\end{document}